\documentclass{amsart}
\usepackage[utf8]{inputenc}
 \usepackage{graphics, fullpage,color, epsfig,url}
\usepackage{tikz,amsthm,amsmath,amstext,amssymb,amscd,esint,mathrsfs, pspicture,multicol,graphpap,graphics,graphicx,comment,enumerate,subfig,sidecap,wrapfig,color,pict2e}
\usepackage{appendix}
 \newcommand{\R}{\mathbb{R}}
 
 \theoremstyle{plain}
\newtheorem{theorem}{Theorem}

\newtheorem{lemma}[theorem]{Lemma}

\newtheorem{proposition}[theorem]{Proposition}

\theoremstyle{definition}

\newtheorem{definition}[theorem]{Definition}

\newtheorem*{theorem*}{Theorem}
\numberwithin{equation}{section}
\numberwithin{theorem}{section}

\title{Two-phase almost minimizers \\
for a fractional free boundary problem}

\author{Mark Allen}
\address{Mathematics Department, Brigham Young University, Provo, UT 84602}
\email{\textcolor{blue}{allen@mathematics.byu.edu}}

\author{Mariana Smit Vega Garcia}
\address{Department of Mathematics, Western Washington University, Bellingham, WA 98225}
\email{\textcolor{blue}{smitvem@wwu.edu}}
\thanks{M.A. has been partially supported by the Simons Grant 637757 and M.S.V.G has been partially supported by the NSF grant DMS-2054282.}
\date{}

\begin{document}

\begin{abstract}
  In this paper, we study almost minimizers to a fractional Alt-Caffarelli-Friedman type functional. Our main results concern the optimal $C^{0,s}$ regularity of almost minimizers as well as the structure of the free boundary. We first prove that the two free boundaries $F^+(u)=\partial\{u(\cdot,0)>0\}$ and $F^-(u)=\partial\{u(\cdot,0)<0\}$ cannot touch, that is, $F^+(u)\cap F^-(u)=\emptyset$. Lastly, we prove a flatness implies $C^{1,\gamma}$ result for the free boundary.

\end{abstract}
\maketitle
\section{Introduction\label{S:intro}}

\subsection{The Alt-Caffarelli-Friedman Functional}
Alt, Caffarelli and Friedman gave in \cite{ACF} the first rigorous mathematical treatment of the two-phase energy
\begin{equation}\label{ACF}
J(u,U)=\int_{U}|\nabla u|^2+\int_{U}\lambda^+\chi_{\{u>0\}}+\lambda^-\chi_{\{u<0\}},
\end{equation}
where $U\subset \R^n$ is a domain with locally Lipschitz boundary, and $\lambda^{\pm}>0$.
This is a two-phase version of the one-phase free boundary problem (also called the Bernoulli
problem) studied in \cite{AC}, first introduced to model the flow of two liquids in jets and cavities and subsequently found
to have applications to numerous problems including eigenvalue optimization, see \cite{DSV}. 

We say that $u$ is a minimizer of $J$ in the open set $U$ if $J(u,V)\le J(v,V)$ for any open set $V$ with $\overline{V}\subset U$ and any $v\in W^{1,2}(U)$ with $v=u$ in $U\setminus \overline{V}$. Given a minimizer $u$, the free boundaries are defined as $F^{\pm}(u)=U\cap\partial\{\pm u>0\}$. 

When $F^+(u)\cap F^-(u)=\emptyset$, each portion of the free boundary $F^{\pm}(u)$ is the free boundary of a minimizer to a one-phase problem whose regularity was established in \cite{AC}. When $F^{+}(u)=F^{-}(u)$, the regularity of the free boundary was studied in \cite{ACF} when $n=2$, and subsequently in \cite{C}, \cite{C2}, \cite{C3}. More recently, \cite{DSV} addressed the so-called branch points, that is, points around which the free boundary contains both one-phase and two-phase points at every scale. More precisely, let $F_{\text{TP}}(u)=F^{+}(u)\cap F^-(u)$ denote the two-phase points, $F_{\text{OP}}(u)=(F^+(u)\cup F^-(u))\setminus F_{\text{TP}}(u)$ denote one-phase points, and $F_{\text{BP}}(u)=F_{\text{TP}}(u)\cap\overline{F_{\text{OP}}(u)}$ denote the branch points.
In \cite{DSV} (and \cite{SV} when $n=2$), the authors obtain regularity of two-phase points. That is, given any $x_0\in F_{TP}(u)\cap U$, there exists $r_0>0$ such that $F^{\pm}(u)\cap B(x_0,r_0)$ are $C^{1,1/2}$ graphs. This interesting result is particularly significant for branch points, which motivated the question of whether branch points actually occur. The existence of a minimizer for which the free boundary exhibits a branch point was proved in \cite{DESVGT2}, where the authors prove that there exists a minimizer that exhibits a `pool' of zeroes.

Given that branch points can occur, understanding their geometry became an important goal. The authors in \cite{DSV2} proved that for certain symmetric, critical points in dimension two, the branch points in the free boundary are locally isolated. In contrast, when one considers \emph{almost minimizers}, the situation is vastly different: in \cite{DESVGT2}, the authors prove that the set of branch points for almost minimizers can be essentially arbitrary. To describe the almost minimizers that will be addressed in this paper, consider the following notion of almost minimizers for energy functionals, first introduced in \cite{Anz}:
\begin{definition} Let $r_0>0$. A \emph{gauge function} is a function $\omega:(0,r_0)\rightarrow [0,\infty)$ which is non-decreasing with $\omega(0+)=0.$
We say that $u\in W^{1,2}_{\text{loc}}(U)$ is an \emph{almost minimizer} (or $\omega$-minimizer) for the Dirichlet energy $D(u,U)=\int_U|\nabla u|^2$ if for any ball $B_r(x_0)\Subset U$ with $0<r<r_0$, we have \[
D(u,B_r(x_0))\le (1+\omega(r))D(v,B_r(x_0))
\]
for any $v\in u+W^{1,2}_0(B_r(x_0))$. 
\end{definition} 
Almost minimization is the natural property to consider when accounting for the presence of noise or lower-order terms in a problem. 
The articles \cite {DET}, \cite{DESVGT} \cite{DESVGT2}, \cite{DT}, and \cite{SS} studied almost minimizers for functionals of the type first
studied by Alt and Caffarelli in \cite{AC} and Alt, Caffarelli and Friedman in \cite{ACF}. Almost minimizers have also been studied in the context of the Signorini problem in \cite{JP}, in \cite{JPSVG} for the variable coefficient setting, in \cite{JP2} for
the obstacle problem for the fractional Laplacian, and in \cite{JP3} for the parabolic thin obstacle problem. Recently, \cite{DESILVA2022167} and \cite{DSJS} addressed almost minimizers for systems with free boundary.

\subsection{The Fractional Laplacian and Caffarelli-Silvestre Extension}\label{SS:sfractional} \label{s:fraclap}
Throughout the paper we will utilize an extension that allows one to localize the fractional Laplacian which is a nonlocal operator. Given $0<s<1$ and dimension $n$, we denote a point $(x,y)\in\R^{n+1}$ with $x \in \mathbb{R}^n$.  Denote with $B_r(x_0,y_0)=\{(x,y)\in\R^{n+1} \ : \ |(x,y)-(x_0,y_0)|<r\}$. We will refer to the plane $\R^{n}\times\{0\}$ as the thin space, and call $\mathcal{B}_r(x_0):=B_r(x_0,0) \cap \{y=0\}$ the thin ball, while $B_r(x_0,0)\subset \R^{n+1}$ will be the solid ball. When the balls are centered at the origin, we will simply write $\mathcal{B}_r(0)= \mathcal{B}_r$ and $B_r(0,0)=B_r$.

We start by recalling one of the definitions of the fractional Laplacian $(-\Delta_{x})^s$: given $\hat{u}\in L^1(\R^{n},(1+|x|^{n+2s})^{-1})=\{w:\R^n\rightarrow \R \ \text{ measurable }  : \ \int_{\R^n}\frac{|w(x)|}{1+|x|^{n+2s}}dx<\infty\}$,
\[
(-\Delta_{x})^s\hat{u}(x)=C_{n,s}\text{p.v.}\int_{\R^{n}}\frac{\hat{u}(x)-\hat{u}(x+z)}{|z|^{n+2s}}dz,\]
where $C_{n,s}$ is a normalization constant. Nonlocal problems involving the fractional Laplacian have received a surge of attention in recent years. In particular, the extension procedure given in \cite{CS} has led to incredible progress in the field. The localization involves the following Poisson kernel (for the extension operator $L_a$), given by
\[
P(x,y)=C_{n,a}\frac{|y|^{1-a}}{(|x|^2+|y|^2)^{\frac{n+1-a}{2}}}, \qquad (x,y)\in \R^{n}\times\R_+=\R^{n+1}_{+},
\]
where $a=1-2s\in (-1,1)$. One then considers the following convolution:
\[
u(x,y):=\hat{u}\ast P(\cdot,y)=\int_{\R^{n}}\hat{u}(z)P(x-z,y)dz, \quad (x,y)\in \R^{n+1}_+.
\]
Then $u(x,y)$ solves the following Cauchy problem:
\begin{align*}
\begin{cases}
L_au=\text{div}(y^a\nabla u)=0 & \text{ in } \R^{n+1}_+\\
u(x,0)=\hat{u}(x) & \text{ on } \R^{n} ,
\end{cases}
\end{align*}
where $\nabla=\nabla_{x,y}$ is the full gradient. 
For the purpose of this paper, it will be more useful to multiply div$(y^a \nabla )$ by the 
weight $y^{-a}$ to obtain the operator 
\begin{equation}  \label{e:yaoperator}
 \mathcal{L}_a := \Delta +  \frac{a}{y} \partial_y.  
\end{equation}
Clearly, $\text{div}(y^a \nabla w)=0$ if and only if $\mathcal{L}_a w =0.$ The importance is when there is a nontrivial right-hand side and we wish to use the comparison principle, which is best given for non-divergence form equations.

One can recover $(-\Delta_{x})^s\hat{u}$ as the fractional normal derivative on $\R^{n}$:
\[
(-\Delta_{x})^s\hat{u}(x)=-C_{n,a}\lim\limits_{y\rightarrow 0+}y^a\partial_{y}u(x,y), \qquad x\in \R^{n},
\]
understood in the sense of traces. Denoting the even reflection of $u$ in the $y$-variable to all of $\R^{n+1}$ still by $u$, that is,
\[
u(x,y)=u(x,-y), \quad x\in \R^{n}, \ y<0,
\]
then $\hat{u}(x)$ is $s$-fractional harmonic in an open set $U\subset\R^{n}$ if, and only if, $u(x,y)$ satisfies
\begin{equation}\label{La}
\mathcal{L}_au=0 \text{ in } \tilde{U}=\R^{n+1}_+\cup(U\times\{0\})\cup \R^{n+1}_-.
\end{equation}
We have $\mathcal{L}_au=0$ in $\R^{n+1}_{\pm}$, so \eqref{La} is equivalent to 
\[
\mathcal{L}_au=0 \text{ in } B_r(x_0,0)
\]
for any ball $B_r(x_0,0)$ with $x_0 \in U$ such that $B_r(x_0,0)\Subset\tilde{U}$, or, equivalently, $\mathcal{B}_r(x_0)\Subset U$. Noticing that solutions of this equation are minimizers of the weighted Dirichlet energy $\int_{B_r(x_0,0)}|y|^a|\nabla v|^2,$ one obtains Proposition 1.1 from \cite{JP2}, which states that a function $\hat{u}\in L^1(\R^{n},(1+|x|^{n+2s})^{-1})$ is $s$-fractional harmonic in $U$ if, and only if, its even reflected Caffarelli-Silvestre extension $u(x,y)$ is in $W^{1,2}_{\text{loc}}(\tilde{U},|y|^a)$ and for any ball $B_r(x_0,0)$ with $x_0\in U$ and $\mathcal{B}_r(x_0)\Subset U$, we have 
\[
\int_{B_r(x_0,0)}|y|^a|\nabla u|^2\le \int_{B_r(x_0,0)}|y|^a|\nabla v|^2,
\]
for any $v\in u+H_0^1(a,B_r(x_0,0))$ (with $H_0^1(a,B_r(x_0,0))$ defined shortly in the next subsection.)

Motivated by the Alt-Caffarelli-Friedman functional in \eqref{ACF}, and the developments in the theory of the fractional Laplacian, the first named author considered in \cite{A} minimizers of the energy functional
\[
\int_{\Omega} |y|^a|\nabla u|^2+\int_{\Omega \cap \{y=0\}}\lambda^+\chi_{\{u>0\}}+\lambda^-\chi_{\{u<0\}}d\mathcal{H}^{n},
\]
where $\Omega\subset \R^{n+1}$. The main result of \cite{A} states that the two free boundaries $F^+(u)=\partial \{u(\cdot,0)>0\}$ and $F^-(u)=\partial\{u(\cdot,0)<0\}$ cannot touch, that is, $F^+(u)\cap F^-(u)=\emptyset$. While in the case $s=\frac{1}{2}$ the separation of the free boundaries follows from an application of the Alt-Caffarelli-Friedman monotonicity formula, the general case $s\in(0,1)$ is addressed in \cite{A} through the use of a Weiss-type monotonicity formula. The paper \cite{A} followed \cite{AP}, where the case $s=\frac{1}{2}$ was considered, and \cite{CRS}, where the one-phase version for general $s\in (0,1)$ was studied. Regularity of the free boundary in the one-phase problem for $0<s<1$ was given in \cite{DR12}, and when $s=1/2$ the higher regularity of the free boundary was shown in \cite{SS3,DS15}.

\subsection{Almost Minimizers}\label{SS:almost}
Inspired by \cite{A}, and by the potential ways in which almost minimizers might differ from minimizers described in \cite{DESVGT2}, in this paper we are interested in almost minimizers of the energy functional studied in the context of minimizers in \cite{A}
\begin{equation}\label{energy}
J(u,\Omega)=\int_{\Omega}|y|^a|\nabla u|^2+\int_{\Omega \cap \{y=0\}}\lambda^+\chi_{\{u>0\}}+\lambda^-\chi_{\{u<0\}}d\mathcal{H}^{n},
\end{equation}
considered over the class 
\[
H^1(a,\Omega)=\{v\in L_{loc}^1(\Omega) \ : |y|^{\frac{a}{2}}v, \ |y|^{\frac{a}{2}}\nabla v\in L^2(\Omega)\}.
\]
Here $\lambda^{\pm}$ are positive constants. Throughout the paper we will also consider the space $H_0^1(a,\Omega)$ which is the subset of $H^1(a,\Omega)$ with zero trace on $\partial \Omega$. It is well known (see for instance \cite{k97}) that $H^1(a,\Omega)$ and $H_0^1(a,\Omega)$ are the the closures of $C_0^{\infty}(\Omega)$ and $C^{\infty}(\Omega)$ respectively with the norm 
\[
\|v \|_{H^1(a,\Omega)} := \left(\int_{\Omega} (v^2 + |\nabla v |^2) |y|^a \right)^{1/2}
\]
(We remark that sometimes in the literature the $L^2$ norm of $v$ is not weighted for the norm of $H^1(a,\Omega)$; however, standard techniques can show the two spaces are equivalent.)
Following the definition for almost minimizers to the fractional obstacle problem when $0<s<1$ given in \cite{JP2}, we define almost minimizers for the fractional Alt-Caffarelli-Friedman functional when $0<s<1$. 
\begin{definition} Let $r_0>0$ and $\omega:(0,r_0)\rightarrow[0,\infty)$ be a gauge function. We say that $u\in L^1(\R^{n},(1+|x'|^{n+2s})^{-1})$ is an almost minimizer of \eqref{energy} in an open set $U\subset\R^{n}$ with gauge function $\omega$ if its reflected Caffarelli-Silvestre extension $u(x,y)\in W^{1,2}_{\text{loc}}(\tilde{U},|y|^a)$ and for any ball $B_r(x_0,0)$ with $x_0\in U$ and $0<r<r_0$ such that $\mathcal{B}_r(x_0,0)\Subset U$, we have
\[
J(u,B_r(x_0,0))\le(1+\omega(r))J(v,B_r(x_0,0))
\]
for any $v\in u+H_0^1(a,B_r(x_0,0))$.
\end{definition}

We will assume $\omega(r)=\kappa r^{\alpha}$ for some $\kappa>0$ and $\alpha\in(0,1]$.
With the extension and subsequent local nature of an almost minimizer, we will actually consider the following more general definition of an almost minimizer. 

\begin{definition}\label{almostminforus}
   Let $\Omega\subset \R^{n+1}$ be an open set, $u \in H^1(a,\Omega)$ with $u(x,y)=u(x,-y)$ and assume that $u$ is $a$-harmonic in 
   $\Omega \setminus \{y=0\}$. We say that $u$ is an almost minimizer of \eqref{energy} if there exists $\kappa>0$ and $\alpha \in (0,1]$ such that for any ball $B_r(x_0,0) \Subset \Omega$, we have 
\[
J(u,B_r(x_0,0))\le(1+\kappa r^{\alpha})J(v,B_r(x_0,0))
\]
for any $v\in u+H_0^1(a,B_r(x_0,0))$.
\end{definition}
Notice that we make the assumption that almost minimizers are $a$-harmonic off the thin space. Such an assumption is similar to what was assumed for almost minimizers for the fractional obstacle problem in \cite{JP2}. This assumption is natural for $0<s<1$ with $s\neq 1/2$ since the only relevance for $y>0$ comes from the Caffarelli-Silvestre extension for the fractional Laplacian, where all functions and competitors are $a$-harmonic off the thin space. 

\begin{definition}Given $u$ an almost minimizer, we define
\[
F^+(u):=\Omega\cap\partial\{u(\cdot,0)>0\} \qquad 
F^-(u)=\Omega \cap \partial\{u(\cdot,0)<0\}, \qquad F(u)=F^+(u)\cup F^-(u),
\]
and call $F(u)$ the free boundary of $u$.    
\end{definition}

\subsection{Main Results}

Our first main result addresses the optimal regularity of almost minimizers:

\begin{theorem*}[Optimal Regularity, see Theorem \ref{T:s3}] Let $u$ be an almost minimizer of $J$ on $B_1$ with $0<s<1$. Then $u\in C^{0,s}(B_1)$.
\end{theorem*}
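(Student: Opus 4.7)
My plan is to establish $C^{0,s}$ regularity by an energy decay / Morrey--Campanato argument, comparing $u$ to its $a$-harmonic replacement on small solid balls centered on the thin space, bounding the energy deficit using the almost-minimizer inequality, and iterating to obtain the target decay on the weighted Dirichlet energy. As a preliminary step, I would establish a local $L^\infty$ bound for $u$ via a De Giorgi / Moser-type iteration, using the truncations $\min(u,k)$ and $\max(u,-k)$ as competitors in Definition \ref{almostminforus}: such truncations only decrease the weighted Dirichlet energy and can increase the bulk penalty by at most $Cr^{n}$, which is enough to run the iteration on super-level sets.

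For the main comparison, fix $B_r = B_r(x_0,0) \Subset B_1$ with $r$ small, and let $v \in u + H_0^1(a, B_r)$ be the (even in $y$) $a$-harmonic replacement of $u$ on $B_r$. Orthogonality of $u-v$ and $v$ in the weighted Dirichlet inner product yields
\[
\int_{B_r}|y|^a|\nabla(u-v)|^2 = \int_{B_r}|y|^a|\nabla u|^2 - \int_{B_r}|y|^a|\nabla v|^2.
\]
Testing Definition \ref{almostminforus} against $v$ and bounding the bulk change by $(\lambda^++\lambda^-)|\mathcal{B}_r| \le Cr^{n}$ gives
\[
\int_{B_r}|y|^a|\nabla(u-v)|^2 \le \kappa r^{\alpha}\int_{B_r}|y|^a|\nabla u|^2 + C r^{n}.
\]
The standard interior Lipschitz bound for even $a$-harmonic functions across the thin space then supplies the decay
\[
\int_{B_\rho}|y|^a|\nabla v|^2 \le C(\rho/r)^{n+1+a}\int_{B_r}|y|^a|\nabla v|^2, \qquad 0<\rho\le r/2,
\]
and combining these two estimates produces the Campanato-type relation
\[
\int_{B_\rho}|y|^a|\nabla u|^2 \le C\bigl[(\rho/r)^{n+1+a} + \kappa r^{\alpha}\bigr]\int_{B_r}|y|^a|\nabla u|^2 + Cr^{n}.
\]

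Because $n < n+1+a$ (equivalently $s<1$), the classical Campanato--Giaquinta iteration lemma converts the above into $\int_{B_\rho}|y|^a|\nabla u|^2 \le C\rho^{n}$ for all small $\rho$. The weighted Poincar\'e inequality then yields $\int_{B_\rho}|y|^a|u-\bar u_\rho|^2 \le C\rho^{n+2}$, and since the weighted measure has homogeneous dimension $n+1+a$ and $(n+2)-(n+1+a)=2s$, the weighted Morrey--Campanato embedding delivers $u \in C^{0,s}$ locally in $B_1$. The main obstacle is the tightness at this key exponent: the decay rate $n+1+a$ of the $a$-harmonic replacement must strictly exceed the bulk-error rate $n$, which holds only because $s<1$ and which pins the achievable H\"older exponent to exactly $s$. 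Technically, the weighted Poincar\'e inequality, weighted Campanato embedding, and iteration lemma all require care in the $A_2$-weight setting $|y|^a$, as does verifying that interior gradient estimates for even $a$-harmonic functions hold up to and across the thin space.
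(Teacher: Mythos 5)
Your argument follows essentially the same route as the paper's proof: replace $u$ by its even $a$-harmonic extension $v$ on a thin-centered ball, use almost-minimality to bound the weighted energy of $u-v$ by $Cr^n + \kappa r^\alpha \int_{B_r}|y|^a|\nabla u|^2$, combine with the $(\rho/r)^{n+1+a}$ decay of the weighted Dirichlet energy of $v$, and iterate dyadically to get $\int_{B_\rho(x_0,0)}|y|^a|\nabla u|^2 \le C\rho^n$.

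Two caveats. First, the preliminary $L^\infty$ bound via De Giorgi/Moser is unnecessary overhead: the Campanato iteration and the resulting Morrey decay on the gradient already yield local boundedness and H\"older continuity directly, and the paper does not include such a step. Second, and more substantively, the final passage from the decay estimate to $C^{0,s}$ is glossed over. The Morrey bound $\int_{B_\rho}|y|^a|\nabla u|^2 \le C\rho^n$ is established only for balls \emph{centered on the thin space}; to invoke a weighted Poincar\'e plus Campanato embedding for the doubling measure $|y|^a\,dxdy$ one also needs matching decay for balls centered off $\{y=0\}$, where $\mu(B_\rho(x_0,y_0))$ scales like $y_0^a\rho^{n+1}$ rather than $\rho^{n+1+a}$. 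This gap is filled by using the standing assumption that $u$ is $a$-harmonic away from the thin space (so interior estimates for $\mathcal{L}_a$ control the decay on balls in $\{|y|>0\}$, with a case split according to whether $y_0\lesssim\rho$). The paper acknowledges exactly this by deferring to the second half of the proof of Theorem 3.1 in \cite{A} ``using that $u$ is $a$-harmonic off the thin space.'' Your writeup should make this case analysis explicit rather than relying on a homogeneous-dimension count, which as stated is not uniform across the two regimes.
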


Next, we show that, similarly to what happens with minimizers, the two free boundaries $F^+(u)$ and $F^-(u)$ cannot touch:

\begin{theorem*}[Separation of free boundaries, see Theorem \ref{t:sep1}] Let $u$ be an almost minimizer of $J$ on $B_1$ with $0<s<1$. Then $F^+(u)\cap F^-(u)=\emptyset$.
\end{theorem*}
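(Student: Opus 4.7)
The proof proceeds by contradiction and blow-up analysis. Suppose for contradiction that $0 \in F^+(u) \cap F^-(u)$, and introduce the rescalings $u_r(x,y) := r^{-s}\, u(rx, ry)$. The plan is to (i) extract a blow-up limit $u_0$ of some subsequence $u_{r_k}$ as $r_k \to 0^+$, (ii) show that $u_0$ is a \emph{genuine} minimizer of $J$ on $\R^{n+1}$, and (iii) show that $0 \in F^+(u_0) \cap F^-(u_0)$, contradicting the separation theorem of \cite{A} applied to the minimizer $u_0$ on any ball containing the origin.

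For step (ii), the optimal $C^{0,s}$ regularity (Theorem \ref{T:s3}) combined with $u(0)=0$ forces the family $\{u_r\}$ to be locally uniformly bounded in $C^{0,s}$, so Arzelà--Ascoli together with weak compactness in $H^1_{\mathrm{loc}}(a, \R^{n+1})$ yields a subsequence $u_{r_k} \to u_0$ uniformly on compact sets and weakly in the weighted Sobolev space. A direct change of variables in Definition \ref{almostminforus} shows that $u_{r_k}$ is itself an almost minimizer on any fixed $B_R$ with gauge $\kappa (r_k R)^\alpha \to 0$. For any competitor $w \in u_0 + H_0^1(a, B_R)$ one constructs modified competitors $w_k \in u_{r_k} + H_0^1(a, B_R)$ by a standard cutoff near $\partial B_R$ so that $J(w_k, B_R) \to J(w, B_R)$; lower semicontinuity of $J$ under weak $H^1(a,\cdot)$ convergence, together with the uniform convergence $u_{r_k} \to u_0$ and a non-degeneracy estimate (discussed below) controlling the characteristic-function terms, gives
\[
J(u_0, B_R) \le \liminf_k J(u_{r_k}, B_R) \le \liminf_k (1 + \kappa (r_k R)^\alpha)\, J(w_k, B_R) = J(w, B_R),
\]
so $u_0$ is a true minimizer of $J$ on every $B_R$, hence globally.

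For step (iii), a non-degeneracy estimate of the form $\sup_{\mathcal{B}_r(x_0)} u \ge c r^s$ for $x_0 \in F^+(u)$, together with its negative counterpart, produces for each fixed $\rho > 0$ points $p_k, q_k \in \mathcal{B}_{r_k\rho}$ with $u(p_k) \ge c(r_k\rho)^s$ and $u(q_k) \le -c(r_k\rho)^s$; rescaling and passing to the limit yields $p_\infty, q_\infty \in \overline{\mathcal{B}_\rho}$ with $u_0(p_\infty) > 0$ and $u_0(q_\infty) < 0$, and sending $\rho \to 0$ places $0$ in $F^+(u_0) \cap F^-(u_0)$, completing the contradiction. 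The main obstacle is precisely the non-degeneracy estimate for almost minimizers: it is needed both to propagate the two-phase status of $0$ to the blow-up and to guarantee pointwise (hence $L^1_{\mathrm{loc}}$) convergence $\chi_{\{u_{r_k} > 0\}} \to \chi_{\{u_0 > 0\}}$, so that the limit functional correctly captures the bulk term. For genuine minimizers this bound follows from a direct competitor comparison, but the almost-minimizer inequality is weakened by the gauge, and non-degeneracy must be recovered through an iteration at small scales that crucially exploits the polynomial decay $\omega(r) = \kappa r^\alpha$ with $\alpha > 0$.
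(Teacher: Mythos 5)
Your proposal matches the paper's argument essentially step for step: blow up at a putative common free boundary point, use $C^{0,s}$ compactness plus the lemma that blow-up limits of almost minimizers are true minimizers (Lemma \ref{L:limits}) to extract a limit $u_0$, invoke nondegeneracy (Theorem \ref{t:thin}) to keep $0$ on both free boundaries of $u_0$, and contradict the separation result of \cite{A} for genuine minimizers. The technical details you sketch for passing to the limit in $J$ (the cutoff near $\partial B_R$ and the characteristic-function convergence) are exactly the content of the paper's Lemma \ref{L:limits}, so this is the same route.
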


Finally, we prove a flatness implies $C^{1,\gamma}$ result for the free boundary:

\begin{theorem*}[Flatness implies $C^{1,\gamma}$, see Theorem \ref{t:reg}]
 Let $u$ be an almost minimizer to $J$ in $B_1$ with constant $\kappa$ and exponent $\alpha$. Assume that $\|u\|_{C^{0,s}(B_1)} \leq C$ and  $|u-U|\leq \tau_0$ in $B_1$, where in polar coordinates $x_n=\rho\cos(\theta), y=\rho\sin(\theta)$, $U$ (depending only on $x_n,y$) is given as \[
 U(x,y)=\left(\rho^{1/2} \cos(\theta/2)\right)^{2s},
\]
and is the prototypical $2D$ solution for our free boundary problem.  If $\tau_0$ and $\kappa$ are small enough depending on $\alpha,n,s$, then $F(u)$ is $C^{1,\gamma_0}$ in $B_{1/2}$ for some $\gamma_0>0$ depending on $n,\alpha,s$. 
\end{theorem*}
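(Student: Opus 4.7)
The plan is to follow the De Silva--type improvement of flatness scheme, adapted to the weighted Caffarelli--Silvestre extension and the almost-minimizer setting. First I would use the separation of free boundaries (Theorem \ref{t:sep1}) together with $|u-U|\leq \tau_0$ and $U\geq 0$ to conclude that, for $\tau_0$ sufficiently small, $F^-(u)\cap B_{3/4}=\emptyset$ in a neighborhood of $F^+(u)$, so near the free boundary of interest $u$ is a nonnegative almost minimizer of the one-phase fractional functional governed only by $\lambda^+$. The closeness hypothesis $|u-U|\leq\tau_0$, combined with optimal $C^{0,s}$ regularity (Theorem \ref{T:s3}), can then be upgraded to a two-sided trapping
\[
U(x\cdot\nu-\epsilon,y)\leq u(x,y)\leq U(x\cdot\nu+\epsilon,y)\quad\text{in } B_1,
\]
with $\epsilon$ controlled by a power of $\tau_0$, which is the standard flatness input to the improvement scheme.

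The core step is an improvement of flatness: there exist $\epsilon_0,\rho_0\in(0,1)$ such that if $u$ is $\epsilon$-flat in direction $\nu$ with $\epsilon\leq\epsilon_0$ and $\kappa\leq\epsilon_0$, then in $B_{\rho_0}$ the function $u$ is $\rho_0\epsilon/2$-flat in a new direction $\nu'$ with $|\nu-\nu'|\leq C\epsilon$. I would prove this by compactness/linearization: suppose for contradiction a sequence $u_k$ of $\epsilon_k$-flat almost minimizers with $\epsilon_k\to 0$ and gauge constants $\kappa_k$ satisfying $\kappa_k\epsilon_k^{-2}\to 0$, and for which the improvement fails. The renormalized functions $w_k=(u_k-U(x\cdot\nu_k,y))/\epsilon_k$ would, via a partial Harnack inequality tailored to the almost-minimizer setting (analogous to the ones used in \cite{DR12} and \cite{DESVGT}), be equicontinuous away from the slit $\{x_n\leq 0,y=0\}$ and converge to a function $w_*$. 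Passing the almost-minimizer inequality to the limit yields that $w_*$ solves the linearized thin problem
\[
\mathcal{L}_a w_*=0\text{ in } B_1\setminus\{x_n\leq 0,y=0\},\qquad \lim_{y\to 0^+}|y|^a\partial_y w_*=0\text{ on }\{x_n>0,y=0\},
\]
with a matching condition at the slit edge. Known $C^{1,\beta}$ regularity for this linear problem then gives that $w_*$ is well approximated at the origin by a linear function $\ell\cdot x+c$, providing the improved flatness direction and contradicting the supposed failure.

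Iterating at dyadic scales $r_j=\rho_0^j$ yields a geometrically decaying flatness $\epsilon_j\leq 2^{-j}\epsilon_0$ and a sequence of normals $\nu_j$ with $|\nu_{j+1}-\nu_j|\leq C\epsilon_j$; this sequence is Cauchy with a H\"older rate, so $F^+(u)$ is a $C^{1,\gamma_0}$ graph for some $\gamma_0>0$ depending on $n,\alpha,s$. The main obstacle is the compactness/linearization step in the simultaneous presence of the Muckenhoupt weight $|y|^a$ and the almost-minimizer gauge: the partial Harnack inequality near the slit must be proved directly from the almost-minimizer inequality via perturbations of $U$ as barriers, since an exact Euler--Lagrange equation is unavailable, and the error $\kappa r^\alpha$ must remain small relative to the square of the flatness throughout the iteration. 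This last constraint is what forces $\gamma_0<\alpha$ and requires $\kappa$ to be chosen sufficiently small at the initial scale, fixing the dependence of $\gamma_0$ on $\alpha$.
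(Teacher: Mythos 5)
Your high-level outline -- reduce to the one-phase case via separation, upgrade $|u-U|\le\tau_0$ to an $\epsilon$-flat trapping, run an improvement-of-flatness argument by compactness/linearization, then iterate -- matches the paper's strategy.  However, there is a genuine gap in the central normalization step that would break the compactness argument as you have written it.

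You propose renormalizing via $w_k=(u_k-U(x\cdot\nu_k,y))/\epsilon_k$, in analogy with De Silva's scheme for the classical Bernoulli problem.  For the \emph{thin} one-phase problem this does not give a uniformly bounded family.  Indeed, under the flatness hypothesis $U(x\cdot\nu-\epsilon,y)\le u\le U(x\cdot\nu+\epsilon,y)$, at a point $(0,0)$ on the slit edge one has $U(0,0)=0$ while $u(0,0)$ can be as large as $\epsilon^{s}$; hence $|w_k(0,0)|$ is of size $\epsilon_k^{s-1}\to\infty$ as $\epsilon_k\to 0$, so no uniform Harnack or equicontinuity is available near $L=\{x_n=0,y=0\}$, and no limit $w_*$ can be extracted there.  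This is precisely the obstruction the paper discusses at the start of Section~\ref{S:compactness}: because the model solution $U$ has only $C^{0,s}$ growth, horizontal and vertical translations are not comparable.  The correct device is the \emph{$\epsilon$-normalized Hodograph transform} $\tilde u$ defined implicitly by $U(x,y)=u(x-\epsilon\tilde u(x,y)e_n,y)$ (see \eqref{e:hodograph}), for which $|\tilde u|\le 1$ automatically follows from $\epsilon$-flatness.  Correspondingly, the linearized problem is not $\mathcal{L}_a w_*=0$ with a zero $a$-Neumann condition as you write, but rather $\mathrm{div}(|y|^a\nabla(U_n h))=0$ in $B_1\setminus P$ together with the edge condition $|\nabla_r h|=0$ on $L$ in a viscosity sense (Definition~\ref{d:eq} and equation~\eqref{e:linearized}), and its regularity is quoted from \cite{SSS}, Theorem~6.1, yielding an estimate $|h-(h(0)+\xi'\cdot x')|\le C|(x,y)|^{1+\gamma}$ rather than full $C^{1,\beta}$ smoothness.

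A second, related gap: you gesture at ``perturbations of $U$ as barriers'' but do not supply the family of comparison sub/supersolutions.  The paper needs the explicit two-parameter family $V_{M,\xi',\zeta}\in\mathcal V_\mu$ built in Section~\ref{S:comparison}, together with the energy comparison Lemmas~\ref{l:subsol} and~\ref{l:supsol}, to prove the Harnack property $(P1)$ and the viscosity property $(P2)$ (Theorem~\ref{t:p1p2}) in the almost-minimizer setting where no Euler--Lagrange equation is available.  Those lemmas encode the comparison principle by a direct energy argument: one replaces $u$ by $\max\{u,\overline V\}$ or $\min\{u,\underline V\}$, exploits almost-minimality, and bounds the energy defect from below by an integral of the form $\mu^2\int(\overline V-u_{\min})|y|$ using the variational characterization of $\overline V$ (Proposition~\ref{p:viscmin}).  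This is the genuinely new input for almost minimizers and is not captured by saying the Harnack inequality ``must be proved directly.''  With the hodograph transform and these comparison functions in place, the rest of your outline -- compactness to the linearized equation, improvement of flatness, dyadic iteration, and the constraint $\sigma\le\epsilon^{C}$ forcing $\kappa$ small and $\gamma_0$ to depend on $\alpha$ -- does agree with the paper's route (Lemma~\ref{l:trapped}, Proposition~\ref{p:limitreg}, Lemma~\ref{l:approximate}, Theorem~\ref{t:reg}).
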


\subsection{Outline of the Paper}

In Section \ref{S:optimal}, we prove optimal $C^{0,s}$ regularity of almost minimizers. In Section \ref{S:nondege}, we discuss nondegeneracy. In Section \ref{S:separation}, we show that the two free boundaries cannot touch, and also show a separation in the full thick space. To prove the regularity of the free boundary we follow the outline in \cite{SStwo} for the case $s=1/2$. In Sections \ref{S:comparison} and \ref{S:compactness} we prove intermediate results necessary for our flatness implies $C^{1,\gamma}$ result for the free boundary, which is done in Section \ref{S:freebdryreg}.

\subsection{Acknowledgments}
The authors would like to thank the anonymous referees who provided useful and detailed comments on an earlier version of the manuscript.

\section{Optimal regularity}\label{S:optimal}

In this section, we prove that almost minimizers have optimal $C^{0,s}$ regularity. The following boundary Harnack principle (Theorem 2.4 in \cite{CRS}) will be utilized throughout the article. 

\begin{proposition} \label{p:bhpa}
 Let $u,v \geq 0$ satisfy $\mathcal{L}_a u=\mathcal{L}_a v =0$ in $B_{2r} \cap \{y>0\}$. Assume  that $u(x,0)=v(x,0)=0$ for $x \in \mathcal{B}_{2r}$. Then there exists a constant $C=C(n,a)$ such that 
 \[
  \sup_{B_r \cap \{y>0\}} \frac{u}{v} \leq C \frac{u(0,r)}{v(0,r)}. 
 \]
 Since $y^{2s}$ is $a$-harmonic, it follows that 
 \[
  \frac{1}{C} y^{2s} \frac{u(0,r)}{r^{2s}} \leq  \sup_{B_r \cap \{y>0\}} u \leq C y^{2s} \frac{u(0,r)}{r^{2s}}.
 \]
\end{proposition}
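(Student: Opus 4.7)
The plan is to prove the boundary Harnack principle by comparing both $u$ and $v$ pointwise to the canonical $\mathcal{L}_a$-harmonic function $y^{2s} = y^{1-a}$. First I would verify by direct computation that $y^{1-a}$ is $\mathcal{L}_a$-harmonic in $\{y>0\}$ and vanishes on the thin space, so it provides the model profile against which all nonnegative solutions with zero trace on $\mathcal{B}_{2r}$ are measured. By scaling I may assume $r=1$. The goal is then to sandwich
\[
c\, u(0,1)\, y^{2s} \;\leq\; u(x,y) \;\leq\; C\, u(0,1)\, y^{2s} \qquad \text{in } B_{1/2}\cap\{y>0\},
\]
and likewise for $v$; dividing the two sandwiches yields the claimed ratio bound, and setting $v=y^{2s}$ in the sandwich gives the second displayed inequality about $u$ alone.

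For the lower bound I would invoke the interior Harnack inequality for $\mathcal{L}_a$, which holds because $|y|^a$ is a Muckenhoupt $A_2$ weight in the range $a\in(-1,1)$, so the Fabes--Kenig--Serapioni theory applies away from the thin space. A chain of balls of radius comparable to the distance to $\{y=0\}$ propagates the value $u(0,1)$ to give $u(X)\geq c\, u(0,1)$ for $X$ with $\mathrm{dist}(X,\{y=0\})\geq d$. To bridge down to the thin space, I compare $w:=u-c\,u(0,1)\,y^{2s}$ with zero in the half-annulus $\{1/2<|X|<3/4\}\cap\{y>0\}$: $w$ is $\mathcal{L}_a$-harmonic there, vanishes on the thin portion of the boundary, and with $c$ small is nonnegative on the spherical portions by the Harnack chain. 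The maximum principle for $\mathcal{L}_a$ (justified by using $y^{2s}$ as a Dirichlet reference) then delivers $w\geq 0$ in the half-annulus, and a similar argument propagates this to $B_{1/2}^+$.

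The harder direction is the upper bound. I would first prove a Carleson-type estimate $u(X)\leq C\, u(0,1)$ on $B_{3/4}\cap\{y>0\}$ via the standard contradiction argument: if $u$ were anomalously large at some $X_0$ close to $\{y=0\}$, interior Harnack applied at radius comparable to $\mathrm{dist}(X_0,\{y=0\})$ would produce a nearby point with strictly larger admissible Harnack radius where $u$ is even bigger, and iterating yields a sequence of points in a fixed compact set along which $u\to\infty$, violating local boundedness. Once this $L^\infty$ control is in hand, I compare $u$ and $C\,u(0,1)\,y^{2s}$ on the half-annulus between $|X|=1/2$ and $|X|=3/4$: both vanish on $\{y=0\}$, and on the two spherical pieces the latter dominates the former once $C$ is chosen large (using that $y^{2s}$ is uniformly bounded below away from the thin space), so the maximum principle gives the pointwise upper bound $u\leq C\,u(0,1)\,y^{2s}$.

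The main obstacle I expect is this upper bound, since it must rule out any growth faster than $y^{2s}$ for nonnegative $\mathcal{L}_a$-harmonic functions vanishing on $\mathcal{B}_{2r}$; this requires both the Carleson iteration at varying scales and the sharp barrier comparison with $y^{1-a}$. Combining the upper bound on $u$ with the lower bound on $v$ gives $u/v\leq (C/c)\,u(0,1)/v(0,1)$ uniformly in $B_{1/2}^+$, and a standard covering argument across dyadic sub-balls removes the loss from $B_r$ to $B_{r/2}$ in the constant. Taking $v=y^{2s}$ in the two-sided sandwich (so that $v(0,r)/r^{2s}=1$) produces the stated comparison of $u$ with $y^{2s}\,u(0,r)/r^{2s}$.
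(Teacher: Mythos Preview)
The paper does not prove this proposition at all; it is simply quoted as Theorem~2.4 from \cite{CRS} (Caffarelli--Roquejoffre--Sire) and used as a black box throughout. Your sketch is a correct outline of the standard proof of the boundary Harnack principle for the degenerate operator $\mathcal{L}_a$: verify that $y^{1-a}$ is the model $\mathcal{L}_a$-harmonic profile vanishing on the thin space, obtain the two-sided sandwich $c\,u(0,1)\,y^{2s}\le u\le C\,u(0,1)\,y^{2s}$ via interior Harnack chains (valid because $|y|^a\in A_2$), a Carleson-type iteration for the upper bound, and barrier comparisons with $y^{2s}$; then divide the sandwiches for $u$ and $v$. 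This is essentially the argument in \cite{CRS} (or in the underlying Fabes--Kenig--Serapioni and Caffarelli--Fabes--Mortola--Salsa theory), so there is no genuine divergence in method---only that you are supplying what the paper merely cites.
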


We start with growth estimates for $a$-harmonic functions. We will utilize the following result from  \cite{JP2}: 

\begin{lemma}[Lemma 3.1 from \cite{JP2}]\label{L:subharmonic} Let $v\in H^1(a,B_R)$ be a solution of $\mathcal{L}_av=0$ in $B_R$. If $v$ is even in $y$, then for $0<\rho<R$,
\[
\int_{B_{\rho}}|\nabla_{x}v|^2|y|^a\le\left(\frac{\rho}{R}\right)^{n+1+a}\int_{B_R}|\nabla_{x}v|^2|y|^a.
\]
\[
\int_{B_{\rho}}|v_{y}|^2|y|^a\le\left(\frac{\rho}{R}\right)^{n+3+a}\int_{B_R}|v_y|^2|y|^a.
\]
\end{lemma}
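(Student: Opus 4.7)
The plan is to exploit two features of $\mathcal{L}_a$: it has coefficients depending only on $y$ (so $x$-derivatives commute with it), and squares of $\mathcal{L}_a$-harmonic functions are $|y|^a$-subharmonic. The first estimate will follow directly by applying this to $\partial_{x_i} v$, and the second by introducing the auxiliary function $w := v_y/y$ and showing that it is $\mathcal{L}_{a+2}$-harmonic and even in $y$, after which the same template applies with the shifted weight $|y|^{a+2}$ producing the extra factor $\rho^2$.

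For the first inequality, I would argue as follows. Because the coefficients of $\mathcal{L}_a$ do not depend on $x$, we can differentiate $\mathcal{L}_a v = 0$ in any tangential direction to see that $\partial_{x_i} v$ is itself $\mathcal{L}_a$-harmonic, and it inherits evenness in $y$ from $v$. A direct computation gives $\mathcal{L}_a ((\partial_{x_i} v)^2) = 2|\nabla \partial_{x_i} v|^2 \ge 0$; equivalently, $\mathrm{div}(|y|^a \nabla (\partial_{x_i} v)^2) \ge 0$, so $(\partial_{x_i} v)^2$ is subharmonic with respect to the weighted operator. The weighted measure $|y|^a\,dx\,dy$ assigns mass $c\rho^{n+1+a}$ to $B_\rho$, so the standard monotonicity of weighted averages of subharmonic functions yields that $\rho \mapsto \rho^{-(n+1+a)}\int_{B_\rho} (\partial_{x_i} v)^2 |y|^a$ is nondecreasing; comparing the values at $\rho$ and $R$ and summing over $i$ gives the stated bound on $|\nabla_x v|^2$.

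For the second inequality, the key observation is that since $v$ is even in $y$, the function $v_y$ is odd and vanishes on the thin space, so $w := v_y/y$ extends smoothly across $\{y=0\}$ and is even in $y$. A direct computation, using $v_y = yw$, $v_{yy} = w + yw_y$, and $v_{yyy} = 2w_y + yw_{yy}$, followed by differentiating the relation $\Delta v + (a/y) v_y = 0$ in $y$, collapses to
\[
y\,\Delta w + (a+2)\, w_y = 0,
\]
i.e.\ $\mathcal{L}_{a+2} w = 0$. Now repeat the first argument with weight $|y|^{a+2}$: $w^2$ is $|y|^{a+2}$-subharmonic, the weighted measure of $B_\rho$ scales as $\rho^{n+1+(a+2)} = \rho^{n+3+a}$, and the same monotonicity gives
\[
\int_{B_\rho} w^2 |y|^{a+2} \le \left(\frac{\rho}{R}\right)^{n+3+a} \int_{B_R} w^2 |y|^{a+2}.
\]
Substituting $w^2 |y|^{a+2} = (v_y)^2 |y|^a$ produces exactly the desired estimate.

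The only step that is not essentially formal is verifying that $w = v_y/y$ is a legitimate $H^1(a+2,\cdot)$ function to which the weighted average monotonicity applies. Interior regularity of $v$ away from the thin space is automatic from $\mathcal{L}_a v = 0$, while smoothness across $\{y=0\}$ uses only the even reflection (so $v_y$ and its ratio with $y$ are classically defined). Hence the identity $\mathcal{L}_{a+2} w = 0$ holds pointwise in $\{y \ne 0\}$ and extends across the thin space by even symmetry, which is sufficient to invoke the weighted subharmonic monotonicity. I expect this regularity/justification step to be the only subtle point; the algebraic computation identifying the shifted operator is the real engine of the proof.
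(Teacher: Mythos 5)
The paper does not prove this lemma; it is quoted verbatim as Lemma~3.1 from \cite{JP2} and used as a black box, so there is no in-paper argument to compare against. Your proof is correct, and it is in fact the natural (and, as far as I can tell, the same) route as in \cite{JP2}: the first estimate comes from the $\mathcal{L}_a$-harmonicity of $\partial_{x_i}v$, the subharmonicity of its square for the weight $|y|^a$, and the monotonicity of the weighted solid-ball average $\rho\mapsto\rho^{-(n+1+a)}\int_{B_\rho}(\partial_{x_i}v)^2|y|^a$; the second is the conjugate-operator observation that $w=v_y/y$ is even and $\mathcal{L}_{a+2}$-harmonic, which shifts the weight exponent to $a+2$ and hence the scaling exponent to $n+1+(a+2)=n+3+a$, together with the identity $w^2|y|^{a+2}=v_y^2|y|^a$. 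The computation $y\Delta w+(a+2)w_y=0$ checks out, and the regularity caveat you flag (that $w$ is in $H^1(a+2,\cdot)$ locally and that the monotonicity argument applies across $\{y=0\}$) is handled by interior regularity for $A_2$-degenerate equations plus the evenness of $v$, exactly as you indicate.
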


\begin{theorem}\label{T:s3}[Optimal $C^{0,s}$ regularity]
Let $u$ be an almost minimizer of $J$ on $B_1$ with $0<s<1$. Then $u\in C^{0,s}(B_1)$. 
\end{theorem}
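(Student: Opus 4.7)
The approach is a Campanato-type iteration: compare $u$ in each ball $B_r(x_0,0)$ centered on the thin space with its $\mathcal{L}_a$-harmonic replacement $v$, exploit the fast Dirichlet-energy decay of $v$ from Lemma \ref{L:subharmonic}, and iterate to obtain a Morrey-type bound that upgrades to $C^{0,s}$ via a weighted Morrey-Campanato embedding. Concretely, let $v\in u+H_0^1(a,B_r(x_0,0))$ be the minimizer of the weighted Dirichlet energy with boundary datum $u|_{\partial B_r}$; then $\mathcal{L}_a v=0$ in $B_r$ and $v$ is even in $y$ (by uniqueness, since $u$ is even). Using $v$ as a competitor in the almost minimizer inequality, bounding the free boundary term by $\int_{\mathcal{B}_r}\lambda^{\pm}\chi_{\{\pm v>0\}}\,d\mathcal{H}^{n}\le Cr^{n}$, and invoking the orthogonality
\[
\int_{B_r}|y|^a|\nabla u|^2=\int_{B_r}|y|^a|\nabla v|^2+\int_{B_r}|y|^a|\nabla(u-v)|^2,
\]
after dropping the nonnegative $\int\lambda^{\pm}\chi_{\{\pm u>0\}}$ on the left one obtains
\[
\int_{B_r}|y|^a|\nabla(u-v)|^2\le\kappa r^{\alpha}\int_{B_r}|y|^a|\nabla v|^2+Cr^{n}.
\]

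Setting $\phi(r):=\int_{B_r(x_0,0)}|y|^a|\nabla u|^2$ and applying Lemma \ref{L:subharmonic} to $v$ (summing the decay of the tangential and normal parts, the latter decaying even faster) gives, for $0<\rho\le r$,
\[
\int_{B_\rho}|y|^a|\nabla v|^2\le\bigl(\tfrac{\rho}{r}\bigr)^{n+1+a}\int_{B_r}|y|^a|\nabla v|^2\le\bigl(\tfrac{\rho}{r}\bigr)^{n+1+a}\phi(r),
\]
and combining yields the Campanato-type inequality
\[
\phi(\rho)\le\Bigl[\bigl(\tfrac{\rho}{r}\bigr)^{n+1+a}+\kappa r^{\alpha}\Bigr]\phi(r)+Cr^{n}.
\]
Since $n+1+a>n$, choosing $\rho=\theta r$ for a suitably small fixed $\theta\in(0,1)$ and iterating, via the standard Campanato iteration lemma (the small-perturbation hypothesis is satisfied on balls of radius $\le r_0$ because $\kappa r^{\alpha}\to 0$), yields $\phi(r)\le Cr^{n}$ for all $r\le r_0$, uniformly in $x_0$ on the thin space. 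Balls centered away from the thin space are handled by standard interior estimates for $\mathcal{L}_a$-harmonic functions, giving the Morrey bound globally in $B_{1/2}$.

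To convert this Morrey bound into Hölder regularity, I use that $|y|^a$ is an $A_2$-Muckenhoupt weight on $\R^{n+1}$, so $d\mu=|y|^a\,dx\,dy$ is doubling of homogeneous dimension $Q=n+1+a$. The weighted Poincar\'e inequality then gives
\[
\frac{1}{\mu(B_r)}\int_{B_r}|u-u_{B_r}|^2\,d\mu\le\frac{Cr^2}{\mu(B_r)}\int_{B_r}|\nabla u|^2\,d\mu\le Cr^{2-(1+a)}=Cr^{2s},
\]
and Campanato's theorem in the doubling setting yields $u\in C^{0,s}(B_1)$. The main technical obstacle is the clean setup of the energy comparison in the first step: the argument hinges on the fact that the thin free boundary contribution scales exactly as $r^{n}$ while the natural $\mathcal{L}_a$-harmonic energy decay is $r^{n+1+a}$, and the positive gap $1+a=2-2s$ is precisely what both powers the Campanato iteration and produces the sharp H\"older exponent $s$.
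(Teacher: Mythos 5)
Your proposal is correct and takes essentially the same approach as the paper: comparison with the $a$-harmonic replacement, the energy-defect bound from almost minimality, the decay estimate of Lemma \ref{L:subharmonic}, and a Campanato/Morrey iteration on thin-centered balls yielding $\int_{B_r(x,0)}|y|^a|\nabla u|^2\le Cr^n$, followed by the weighted Morrey embedding (which the paper defers to \cite{A}). One small slip: the orthogonality $\int|y|^a\nabla v\cdot\nabla(u-v)=0$ holds on $B_r$ but not on the smaller $B_\rho$, so the ``combining'' step should read $\phi(\rho)\le 2\bigl[(\rho/r)^{n+1+a}+\kappa r^{\alpha}\bigr]\phi(r)+Cr^{n}$; since $n+1+a>n$ the extra constant is harmless.
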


\begin{proof} 
We will follow the outline in \cite{A}. Let $u$ be an almost minimizer in $B_1$. For every $0<r<1$, we consider the $a$-harmonic replacement $v$ of $u$ in $B_r=B_r(x,0)\subset B_1$, that is, $\mathcal{L}_av=0$ in $B_r$ and $v=u$ on $\partial B_r$. Since $u$ is an almost minimizer we obtain 
 \begin{align*}
  \int_{B_r}|y|^a |\nabla u|^2&
  + \int_{\mathcal{B}_r}\lambda^+\chi_{\{u>0\}} + \lambda^-\chi_{\{u<0\}}
 \\
 &\leq 
  (1+\kappa r^{\alpha})\left(\int_{B_r}|y|^a |\nabla v|^2 
  + \int_{\mathcal{B}_r}\lambda^+\chi_{\{v>0\}} + \lambda^-\chi_{\{v<0\}}\right)
 \end{align*}
 so that 
 \[
 \int_{B_r}|y|^a |\nabla u|^2 
 \leq (1+\kappa r^{\alpha})\left(\int_{B_r}|y|^a |\nabla v|^2
 + C r^{n}\right) + Cr^{n}. 
 \]
 We then obtain 
 \[
 \int_{B_r}|y|^a |\nabla u|^2 \leq 
 \int_{B_r}|y|^a |\nabla v|^2 + Cr^{n} + 
 \kappa r^{\alpha} \int_{B_r}|y|^a |\nabla v|^2.
 \]
 Using that $v$ is $a$-harmonic we have that 
 \[
  \int_{B_r} |y|^a |\nabla (v-u)|^2 = 
  \int_{B_r} |y|^a (|\nabla u|^2 - |\nabla v|^2),
 \]
 so that 
 \[
  \int_{B_r} |y|^a |\nabla (v-u)|^2 
  \leq Cr^{n} 
  + \kappa r^{\alpha} \int_{B_r}|y|^a |\nabla v|^2. 
 \]
 We now choose $0<\rho<r<1$ and utilize Lemma \ref{L:subharmonic}
 to obtain 
 \[
 \begin{aligned}
  \int_{B_{\rho}}|y|^a |\nabla u|^2 
  &= \int_{B_{\rho}}|y|^a |\nabla (u-v+v)|^2 \\
  &\leq 2 \left(\int_{B_{\rho}}|y|^a |\nabla (v-u)|^2 
  + |y|^a |\nabla v|^2\right) \\
  &\leq 2 \left(\int_{B_{r}}|y|^a |\nabla (v-u)|^2 
  + \int_{B_{\rho}}|y|^a |\nabla v|^2\right) \\
  &\leq Cr^{n} + \kappa r^{\alpha} \int_{B_r}|y|^a |\nabla v|^2
  + 2\left(\frac{\rho}{r} \right)^{n+1+a} \int_{B_r}|y|^a |\nabla v|^2.\\
  &\leq Cr^{n} + \kappa r^{\alpha} \int_{B_r}|y|^a |\nabla u|^2
  + 2\left(\frac{\rho}{r} \right)^{n+1+a} \int_{B_r}|y|^a |\nabla u|^2.
  \end{aligned}
 \]

 Now choosing $\delta < 1/2$ with the choices $r=\delta^k, \rho=\delta^{k+1},$ and $\mu=\delta^{n}$ we have 
 \[
  \int_{B_{\delta^{k+1}}}|y|^a |\nabla u|^2
  \leq C \mu^k + \left(\kappa \delta^{k \alpha} + C \mu \delta^{1+a} \right)\int_{B_{\delta^{k}}}|y|^a |\nabla u|^2.
 \]
 At this point, fix $\gamma=1/2$ and choose $\delta$ small enough such that $C\mu\delta^{1+a}\le \mu\gamma/2$. Then let $k\ge N\in\mathbb{N}$ be large enough such that $\kappa\delta^{k\alpha}\le\mu\gamma/2$. Then we obtain for $k \geq N$, 
 \[
 \int_{B_{\delta^{k+1}}}|y|^a |\nabla u|^2
  \leq C \mu^k + \mu \gamma \int_{B_{\delta^{k}}}|y|^a |\nabla u|^2.
 \]
 By iterating the above inequality, we obtain that 
 \[
 \begin{aligned}
  \int_{B_{\delta^{k}}}|y|^a |\nabla u|^2
  &\leq C\mu^{k-1} \sum_{j=0}^{k-N} \gamma^j 
   \int_{B_{\delta^{N}}}|y|^a |\nabla u|^2 \\
  &\leq \frac{C}{1-\gamma} \mu^{k-1} \int_{B_{1}}|y|^a |\nabla u|^2\\
  &= \frac{C}{1-\gamma} \frac{(\delta^k)^{n}}{\delta^{n}} \int_{B_{1}}|y|^a |\nabla u|^2.
  \end{aligned}
 \]
 The constant $\delta$ is now fixed from below. Then for any 
 $r=\delta^{k}$ with $k \geq N$, we have 
 \[
 \int_{B_{r}}|y|^a |\nabla u|^2 
 \leq \frac{C}{(1-\gamma)\delta^{n-1}} r^{n} \int_{B_{1}}|y|^a |\nabla u|^2.
 \]
 Then for a new constant $C$ and any $r \leq \delta^N$ we have 
 \[
 \int_{B_{r}(x,0)}|y|^a |\nabla u|^2 
 \leq C r^{n} \int_{B_{1}}|y|^a |\nabla u|^2.
 \]
 From the estimate above the proof follows as in the second half of the proof in Theorem 3.1 in \cite{A} using that $u$ is $a$-harmonic off the thin space. 
\end{proof}

Throughout the paper, we will often make use of the following estimate obtained at the end of the above proof. If $u$ is an almost minimizer in $B_1$, and if $B_r(x,0) \subset B_{3/4}(0)$, then 
\begin{equation} \label{e:sqest}
  \int_{B_r(x,0)} |\nabla u|^2 |y|^a \leq C r^{n},
 \end{equation}
with the constant $C$ depending on dimension $n$, $a$, and $\int_{B_1}|y|^a |\nabla u|^2$. We note that this estimate is obtained only when the ball is centered on the thin space. 

The energy functional $J$ has a natural rescaling. For a fixed point $(x_0,0) \in F(u)$, we will consider the rescaling
\[
u_r(x,y)=\frac{u(rx+x_0,ry)}{r^s}.
\]
The next result shows that by letting $r$ become small, the almost minimizer $u_r$ becomes closer to becoming a minimizer. This technique is a common approach to studying almost minimizers and is why so many results require ``for $r$ small enough.'' 

\begin{proposition} \label{p:rescale}
 Let $u$ be an almost minimizer in $\Omega$ with $B_r(x_0,0)\subset \Omega$. If 
 \[
  u_r(x,y)=\frac{u(rx+x_0,ry)}{r^s},
 \]
 then 
 \[
  J(u_r,B_1)\leq (1+\kappa r^{\alpha})J(v,B_1),
 \]
 for any $v \in u_r + H_0^1(a,B_1)$. 
\end{proposition}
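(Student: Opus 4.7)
The plan is to do a standard change of variables: take a competitor $v$ for $u_r$ on $B_1$, unfold it back to a competitor $\tilde v$ for $u$ on $B_r(x_0,0)$, apply the almost-minimizing property of $u$, and then carefully track how the two pieces of $J$ scale under the change of variables. The key observation is that with $a=1-2s$, the rescaling $u_r(x,y)=r^{-s}u(rx+x_0,ry)$ is chosen precisely so that both terms in $J$ scale with the same power of $r$, namely $r^n$.

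First I would define, for a given $v\in u_r+H_0^1(a,B_1)$, the function
\[
\tilde v(x,y):=r^{s}\,v\!\left(\tfrac{x-x_0}{r},\tfrac{y}{r}\right),\qquad (x,y)\in B_r(x_0,0).
\]
Since $v-u_r\in H_0^1(a,B_1)$ and the change of variables is a bi-Lipschitz diffeomorphism preserving $|y|=0$, a direct computation using the definition of the weighted Sobolev space shows $\tilde v-u\in H_0^1(a,B_r(x_0,0))$, so $\tilde v$ is an admissible competitor for $u$ on $B_r(x_0,0)$. The almost-minimality then gives
\[
J(u,B_r(x_0,0))\le(1+\kappa r^{\alpha})J(\tilde v,B_r(x_0,0)).
\]

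Next I would compute how each piece of $J$ transforms. For the Dirichlet term, substituting $\xi=(x-x_0)/r$, $\eta=y/r$ produces a Jacobian $r^{n+1}$, a factor $r^a$ from $|y|^a=r^a|\eta|^a$, and a factor $r^{2(s-1)}$ from $|\nabla\tilde v|^2$, for a total exponent
\[
n+1+a+2(s-1)=n+(a+2s-1)=n,
\]
using $a=1-2s$. Hence
\[
\int_{B_r(x_0,0)}|y|^a|\nabla\tilde v|^2=r^{n}\int_{B_1}|\eta|^a|\nabla v|^2,
\]
and the analogous identity holds with $\tilde v,v$ replaced by $u,u_r$. For the bulk term, note that $\{\tilde v>0\}$ corresponds under the rescaling to $\{v>0\}$ (and similarly for the negative phase), and $d\mathcal H^n$ contributes a factor $r^n$, so
\[
\int_{\mathcal B_r(x_0)}\bigl(\lambda^+\chi_{\{\tilde v>0\}}+\lambda^-\chi_{\{\tilde v<0\}}\bigr)
=r^{n}\int_{\mathcal B_1}\bigl(\lambda^+\chi_{\{v>0\}}+\lambda^-\chi_{\{v<0\}}\bigr),
\]
and likewise for $u,u_r$. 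Thus $J(\tilde v,B_r(x_0,0))=r^n J(v,B_1)$ and $J(u,B_r(x_0,0))=r^n J(u_r,B_1)$.

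Dividing the almost-minimality inequality by $r^n$ yields the desired conclusion. There is really no hard step here; the only thing to be attentive to is the bookkeeping of exponents, and in particular the cancellation $a+2s=1$, which is exactly the reason one divides by $r^s$ in the definition of $u_r$. I would also remark briefly that the same argument works for any ball $B_\rho(y_0,0)\subset B_1$ in the rescaled picture, showing that $u_r$ itself is an almost minimizer on $B_1$ with gauge $\kappa(r\cdot)^{\alpha}=\kappa r^{\alpha}(\cdot)^{\alpha}$—this improved constant is the real content of the proposition for later applications.
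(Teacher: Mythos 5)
Your proof is correct and is essentially the same change-of-variables argument as in the paper, with the minor (and actually cleaner) difference that you start from a competitor $v$ for $u_r$ on $B_1$ and scale it up to a competitor $\tilde v$ for $u$ on $B_r(x_0,0)$, whereas the paper starts from a competitor for $u$ and scales it down; the two directions are equivalent since the rescaling is a bijection on competitors. Your explicit bookkeeping of the exponent $n+1+a+2(s-1)=n$ using $a=1-2s$ fills in the scaling identity that the paper states without computation.
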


\begin{proof}
 Let $v \in u + H_0^1(a,B_r(x_0,0))$, and denote
 \[
  v_r(x,y)=\frac{v(rx+x_0,ry)}{r^s}.
 \]
 Now 
 \[
  J(u,B_r(x_0,0))=r^n J(u_r,B_1) \quad \text{ and } \quad J(v,B_r(x_0,0))=r^n J(v_r,B_1).
 \]
 Then since 
 \[
 J(u,B_r(x_0,0))\leq (1 + \kappa r^{\alpha}) J(v,B_r(x_0,0)),
 \]
 we have that 
 \[
  J(u_r, B_1) \leq (1 + \kappa r^{\alpha}) J(v_r,B_1). 
 \]
\end{proof}

This next result is an almost-Caccioppoli inequality and will allow us to bound the Dirichlet energy on the interior. 
\begin{proposition} \label{p:caccioppoli}
 Let $u$ be an almost minimizer on $B_2$. Then for any $B_r(x,0) \subset B_1$, there exists two constants $C_1,C_2$ with $C_1$ depending on $n,a$ and $J(u,B_2)$ and $C_2$ depending only on $n,a$ such that 
  \[
   \int_{B_{r/2}(x,0)} |\nabla u|^2 |y|^a \leq C_1 r^{n+\alpha} + \frac{C_2}{r^2} \int_{B_r(x,0)} u^2 |y|^a. 
  \]
\end{proposition}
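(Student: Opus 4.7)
My plan is to prove this almost-Caccioppoli inequality by the classical cutoff-competitor method, adapted to the almost-minimizing setting. I would first choose a smooth cutoff $\eta \in C_c^\infty(B_r(x,0))$, even in $y$, with $\eta \equiv 1$ on $B_{r/2}(x,0)$, $0 \leq \eta \leq 1$, and $|\nabla \eta| \leq C/r$, and form the competitor
\[
v = (1-\eta^2) u \in u + H_0^1(a, B_r(x,0)).
\]
A key observation is that $v \equiv 0$ on $\mathcal{B}_{r/2}$ and $\operatorname{sgn}(v) = \operatorname{sgn}(u)$ elsewhere on $\mathcal{B}_r$, so $\chi_{\{v>0\}} \leq \chi_{\{u>0\}}$ and $\chi_{\{v<0\}} \leq \chi_{\{u<0\}}$. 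In particular the difference of the $\lambda^\pm$ surface integrals for $u$ and $v$ is nonnegative, which is exactly what makes $(1-\eta^2)u$ the right competitor rather than, say, $(1-\eta)u$.

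I would next apply almost minimality in the form $J(u, B_r) - J(v, B_r) \leq \kappa r^\alpha J(v, B_r)$ and discard the nonnegative $\lambda$-difference on the left. A direct expansion from $v = (1-\eta^2) u$ yields
\[
|\nabla u|^2 - |\nabla v|^2 = \eta^2(2-\eta^2) |\nabla u|^2 + 4 \eta(1-\eta^2) u\, \nabla u \cdot \nabla \eta - 4 \eta^2 u^2 |\nabla \eta|^2,
\]
and Young's inequality on the cross term absorbs a small multiple of $\eta^2 |\nabla u|^2$ into the favorable leading $\eta^2(2-\eta^2)|\nabla u|^2 \geq \eta^2 |\nabla u|^2$ term. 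With $|\nabla \eta| \leq C/r$ this produces, with constants depending only on $n, a$,
\[
\int_{B_r} |y|^a \eta^2 |\nabla u|^2 \leq \frac{C}{r^2} \int_{B_r} |y|^a u^2 + C \kappa r^\alpha J(v, B_r).
\]

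To close the argument I would bound $J(v, B_r)$ using $|\nabla v|^2 \leq 2|\nabla u|^2 + 8 u^2 |\nabla \eta|^2$, the trivial $O(r^n)$ estimate on the $\lambda$-terms, and the Dirichlet-energy growth estimate \eqref{e:sqest} from the proof of Theorem \ref{T:s3}, which gives $\int_{B_r(x,0)} |y|^a |\nabla u|^2 \leq C r^n$ with $C$ depending on $n, a, J(u, B_2)$. This produces $J(v, B_r) \leq C r^n + C r^{-2} \int_{B_r} |y|^a u^2$; substituting and using $\eta \equiv 1$ on $B_{r/2}$ yields the inequality, with the $\kappa r^\alpha \cdot r^n$ contribution feeding $C_1 r^{n+\alpha}$ and the principal cutoff term giving $C_2 r^{-2} \int u^2$. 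The main bookkeeping subtlety is keeping $C_2$ independent of $J(u, B_2)$: this works because the $J(u, B_2)$ dependence only enters through the $r^n$ bound in \eqref{e:sqest} and is thereby confined to $C_1$, while the Young-inequality absorption constants come from a purely algebraic identity and depend only on $n, a$.
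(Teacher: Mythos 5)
Your argument is essentially the paper's proof: form a cutoff competitor, expand the Dirichlet energies, absorb the cross term by Young's inequality, and invoke almost-minimality together with the growth estimate \eqref{e:sqest}. The only cosmetic difference is the choice of competitor. The paper takes $v = u - \tau\eta^2 u$ with $\tau = 1/2$, so $1-\tau\eta^2>0$ and $v$ keeps the sign of $u$ everywhere, making the $\lambda$-terms in $J(u,\cdot)-J(v,\cdot)$ cancel exactly; you take $\tau=1$, so $v$ vanishes on $B_{r/2}$ and the $\lambda$-difference is merely nonnegative and can be discarded. Both observations serve the same purpose.

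One piece of bookkeeping does not quite close as written. You bound the almost-minimality penalty by $\kappa r^\alpha J(v,B_r)$ and then estimate $J(v,B_r)\leq Cr^n + Cr^{-2}\int_{B_r}u^2|y|^a$. The $\kappa r^\alpha\cdot Cr^n$ piece feeds $C_1 r^{n+\alpha}$, as you say, but the $\kappa r^\alpha\cdot Cr^{-2}\int u^2$ piece is a second copy of the Caccioppoli term weighted by $\kappa r^\alpha$, and absorbing it into $C_2 r^{-2}\int u^2$ forces $C_2$ to depend on $\kappa$, which the proposition disallows. The paper sidesteps this: from $J(u)\leq(1+\kappa r^\alpha)J(v)$ one has $J(v)\geq J(u)/(1+\kappa r^\alpha)$, hence $J(u)-J(v)\leq \frac{\kappa r^\alpha}{1+\kappa r^\alpha}J(u)\leq \kappa r^\alpha J(u)$, and $J(u,B_r)$ is controlled by \eqref{e:sqest} plus the trivial $O(r^n)$ bound on the $\lambda$-terms, so the penalty is $\leq C\kappa r^{n+\alpha}$ with no $\int u^2$ factor appearing alongside $\kappa$. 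Replacing $J(v)$ by $J(u)$ in your final step via this observation repairs the constant.
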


\begin{proof}
 By rescaling and translation we may assume integration over the unit ball centered at the origin, so by Proposition \ref{p:rescale}
 \[
  J(u,B_1) \leq (1+ \kappa r^{\alpha}) J(v,B_1). 
 \]
 We utilize the standard cut-off function $0 \leq \eta \leq 1$ for the Caccioppoli inequality with $\eta \equiv 1$ on $B_{1/2}$. We let $v=u - \tau \eta^2 u$ in the inequality above with $\tau >0$. We then obtain
 \[
 \begin{aligned}
  \int_{B_1} 2 \tau |\nabla u|^2 \eta^2 |y|^a &\leq \frac{\kappa r^{\alpha}}{1+\kappa r^{\alpha}}  J(u,B_1)  
   - \int_{B_1} 4\tau \eta u \langle \nabla u, \nabla \eta \rangle |y|^a \\
  &\quad + \int_{B_1} 4 \tau^2 \eta^2 u^2 |\nabla \eta|^2 |y|^a 
   - \int_{B_1} 4 \tau^2 \eta^3 \langle \nabla u, \nabla \eta \rangle  |y|^a 
  + \tau^2 \eta^4 |\nabla u|^2 |y|^a. 
\end{aligned}
 \]
 We note that $J(u,B_1)$ is bounded in the rescaling from \eqref{e:sqest}. The final term on the right hand side can be absorbed into the left by choosing $\tau=1/2$ small. The second and third terms can be separated using Cauchy's inequality just as in the standard proof of the Caccioppoli inequality. Consequently, we have 
 \[
  \int_{B_1} |\nabla u|^2 \eta^2 |y|^a \leq C_1 r^{\alpha}+ C_2 \int_{B_1} u^2 |\nabla \eta|^2 |y|^a,   
 \]
 and the conclusion follows. 
\end{proof}

We end this section by showing that blow-up limits of almost minimizers are in fact minimizers of the functional. 

\begin{lemma}\label{L:limits} Let $u_k$ be a sequence of almost minimizers on $B_{r_k M_k}(0,0)$ with $r_k\rightarrow 0$ and $M_k\rightarrow \infty$. Assume that 
\begin{equation} \label{e:assump}
\int_{B_{r_k M_k/2}} |\nabla u_k|^2 |y|^a \leq C (r_k M_k)^n. 
\end{equation}
Then for a subsequence
\[
\tilde{u}_k(x)=\frac{u_k(r_kx)}{r_k^s}
\]
converges uniformly on compact sets to $u$, which is a local minimizer on $\R^{n+1}$ (meaning $u$ is a minimizer on any ball centered on the thin space).
\end{lemma}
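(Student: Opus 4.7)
The plan has three steps: (a) show each rescaling $\tilde u_k$ is itself an almost minimizer on $B_{M_k}$ whose gauge tends to $0$ on every fixed ball as $k\to\infty$, (b) extract a subsequence converging uniformly on compact subsets of $\R^{n+1}$, and (c) pass to the limit in the almost-minimizer inequality to conclude that the limit $u$ is a minimizer on any solid ball.

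For (a), given a competitor $v\in\tilde u_k+H_0^1(a,B_\rho(x_0,0))$ with $B_\rho(x_0,0)\Subset B_{M_k}$, define $V(X,Y):=r_k^s v(X/r_k,Y/r_k)$ on $B_{r_k\rho}(r_kx_0,0)\subset B_{r_kM_k}$, which is a competitor for $u_k$. A direct change-of-variables calculation (essentially that of Proposition \ref{p:rescale}) yields
\[
J(u_k,B_{r_k\rho}(r_kx_0,0))=r_k^nJ(\tilde u_k,B_\rho(x_0,0)), \qquad J(V,B_{r_k\rho}(r_kx_0,0))=r_k^nJ(v,B_\rho(x_0,0)),
\]
and the almost-minimizer inequality for $u_k$ then gives $J(\tilde u_k,B_\rho(x_0,0))\le(1+\kappa(r_k\rho)^\alpha)J(v,B_\rho(x_0,0))$ with gauge $\kappa(r_k\rho)^\alpha\to 0$ as $k\to\infty$ for each fixed $\rho$.

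For (b), rescaling \eqref{e:assump} gives $\int_{B_{M_k/2}}|y|^a|\nabla\tilde u_k|^2\le CM_k^n$, and the Morrey-type iteration from the proof of Theorem \ref{T:s3} underlying \eqref{e:sqest}, applied to $\tilde u_k$ at the outer scale $M_k/2$, provides, for each fixed $R$ and $k$ large, the uniform bound $\int_{B_R(x,0)}|y|^a|\nabla\tilde u_k|^2\le CR^n$. Coupled with the $C^{0,s}$ estimate of Theorem \ref{T:s3} (whose seminorm is scale-invariant) and a pointwise normalization at the blow-up base point, Arzel\`a--Ascoli extracts a subsequence with $\tilde u_k\to u$ uniformly on compact subsets and $\nabla\tilde u_k\rightharpoonup\nabla u$ weakly in $L^2_{\mathrm{loc}}(|y|^a)$.

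For (c), fix $B_\rho(x_0,0)$ and $v\in u+H_0^1(a,B_\rho(x_0,0))$, and build competitors $v_k:=v+(1-\eta)(\tilde u_k-u)\in\tilde u_k+H_0^1(a,B_\rho(x_0,0))$ via a cutoff $\eta\in C_c^\infty(B_\rho(x_0,0))$ with $\eta\equiv 1$ on $B_{(1-\epsilon)\rho}(x_0,0)$ and $|\nabla\eta|\lesssim(\epsilon\rho)^{-1}$. Lower semicontinuity of the weighted Dirichlet integral together with $\chi_{\{u>0\}}\le\liminf_k\chi_{\{\tilde u_k>0\}}$ (valid since uniform convergence forces $\tilde u_k(x)>0$ eventually whenever $u(x)>0$, and analogously for $\{u<0\}$) gives $J(u,B_\rho)\le\liminf_k J(\tilde u_k,B_\rho)$. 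On the right, $v_k\equiv v$ on $B_{(1-\epsilon)\rho}$, so only the collar $B_\rho\setminus B_{(1-\epsilon)\rho}$ contributes error; sending $k\to\infty$ first and then $\epsilon\to 0$ yields $J(u,B_\rho)\le J(v,B_\rho)$. The main obstacle lies in controlling this collar error: the transition term $\int|y|^a|\nabla\eta|^2|\tilde u_k-u|^2\lesssim(\epsilon\rho)^{-2}\|\tilde u_k-u\|_{L^\infty(B_\rho)}^2$ vanishes as $k\to\infty$ by uniform convergence, but the remainder $\int_{B_\rho\setminus B_{(1-\epsilon)\rho}}|y|^a|\nabla\tilde u_k-\nabla u|^2$ is not controlled by weak convergence alone; closing the argument requires upgrading to strong $H^1(a)$ convergence on compact sets, which follows by testing the almost-minimizer inequality against the competitor $v=u$ in the limit. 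The indicator mismatch on the collar is bounded by the $n$-dimensional measure $|\mathcal B_\rho\setminus\mathcal B_{(1-\epsilon)\rho}|=O(\epsilon\rho^n)$ and thus vanishes as $\epsilon\to 0$.
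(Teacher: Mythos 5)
Your steps (a) (rescaled almost-minimality with vanishing gauge) and (b) (compactness via the scale-invariant $C^{0,s}$ bound) are correct and match the paper. Step (c) is where the argument breaks down, and you correctly diagnose the danger — the collar term $\int_{B_\rho\setminus B_{(1-\epsilon)\rho}}|y|^a|\nabla(\tilde u_k-u)|^2$ is not controlled by weak convergence — but the fix you propose does not close the gap. With a \emph{fixed} cutoff $\eta\equiv 1$ on $B_{(1-\epsilon)\rho}$, the order of limits $k\to\infty$ then $\epsilon\to 0$ fails if $|\nabla(\tilde u_k-u)|^2|y|^a$ concentrates near $\partial B_\rho$ in a $k$-dependent way (e.g.\ on spheres $\partial B_{\rho-1/k}$): for \emph{every} fixed $\epsilon$ the concentration eventually lies inside the collar, so $\liminf_k$ of the collar energy is bounded below by a positive constant uniformly in $\epsilon$. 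Your suggested remedy — ``upgrading to strong $H^1(a)$ convergence by testing the almost-minimizer inequality against the competitor $v=u$ in the limit'' — is circular: to make that test produce energy convergence you would again have to glue $u$ to $\tilde u_k$ across a collar, i.e.\ you would need precisely the collar control you are trying to establish.

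The paper avoids this by a pigeonhole over shells: it fixes $j$ concentric annuli $A_{i,j}$ of width $\epsilon/j$ inside the collar and associated cutoffs $\eta_{i,j}$, and uses the \emph{uniform} bound $\int_{B_R}(|\nabla\tilde u_k|^2+|\nabla u|^2+|\nabla\psi|^2)|y|^a\le C$ to select, for each $k$, an index $i=i(k)$ such that the energy of $\tilde u_k$, $u$ and $\psi$ on $A_{i(k),j}$ is at most $C/j<\epsilon$. The cutoff used to build the competitor then varies with $k$, so the gluing always happens in a shell where no concentration occurs. That is the missing ingredient in your argument; once you replace the fixed $\eta$ by the $k$-dependent $\eta_{i(k),j}$ and insert the resulting shell estimate \eqref{e:shell}, the remainder of your step (c) (lower semicontinuity for the Dirichlet part, pointwise convergence for the indicator part, then $\epsilon\to 0$) goes through exactly as in the paper.

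One smaller remark: you invoke ``a pointwise normalization at the blow-up base point'' to get uniform boundedness of $\tilde u_k$. In the paper's applications the base point is a free boundary point, so $\tilde u_k(0)=0$ and no normalization is needed; but as the lemma is stated no such assumption is made, so if you want uniform $C^0$ bounds from the scale-invariant $C^{0,s}$ seminorm you should either assume $u_k(0)=0$ or make explicit how you normalize — and note that after such a normalization the limit would be a minimizer only up to an additive constant.
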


\begin{proof}
 By the optimal $C^{0,s}$ regularity for almost minimizers, we have the existence of a subsequence $\tilde{u}_k$ which converges on every compact set in $C^{0,\gamma}$ for every $0<\gamma<s$ to a limit function $u$. For simplicity in notation, we will assume by translation that our ball $B_{R}(x_0,0)$ over which we show $u$ to be a minimizer is centered at the origin $(0,0)$. 

 Fix $R>0$ and $\epsilon>0$, and let $\psi \in H_0^1(a,B_R(0,0))$. We will compare $J(\tilde{u}_k)$ to $J(\tilde{u}_k + \psi + \eta(u -\tilde{u}_k))$ with $\eta$ a cut-off function. Since we are only assuming the weak convergence in $H^1(a,B_R)$, it is possible for $\nabla (u - \tilde{u}_k)$ to concentrate on a sphere; however, the concentration cannot occur on every sphere simultaneously which motivates the following construction. For $j \in \mathbb{N}$ and $0\leq i < j$, let $\eta_{i,j}\geq 0$ be a standard radial cutoff function such that 
 \[
 \begin{cases}
   &\eta_{i,j}(x) =1 \quad \text{ if } |x|\leq R- \epsilon + i\epsilon/j \\
   &\eta_{i,j}(x) =0 \quad \text{ if } |x| \geq R-\epsilon + (i+1)\epsilon/j \\
 \end{cases}
 \]
 and $|\nabla \eta_{i,j}| \leq j/\epsilon$. We define 
 \[
 A_{i,j} :=\{x: R-\epsilon + \epsilon i/j \leq |x| \leq R-\epsilon + \epsilon (i+1)/j\}.
 \]

 From the assumption \eqref{e:assump} and \eqref{e:sqest} we have for all $k$
 \begin{equation} \label{e:pickj1}
  \int_{B_R} (|\nabla \tilde{u}_k|^2 + |\nabla u|^2 + |\nabla \psi|^2)|y|^a \leq C. 
 \end{equation}
 We now choose $j$ large enough such that $C/j < \epsilon$, and note that $j$ is now fixed.
 From \eqref{e:pickj1} for each $k$ there exists some $0\leq i<j$ (with $i$ depending on $k$) such that 
 \begin{equation} \label{e:picki1}
  \int_{A_{i,j}} (|\nabla \tilde{u}_k|^2 + |\nabla u|^2 + |\nabla \psi|^2)|y|^a < \epsilon, 
 \end{equation}
 from which it follows that also 
 \begin{equation} \label{e:picki2}
 \int_{A_{i,j}} |\nabla (\tilde{u}_k-u)|^2 |y|^a < 2\epsilon.  
 \end{equation}
 Recalling that $|\nabla \eta_{i,j}| \leq j/\epsilon$, we utilize the uniform convergence of $\tilde{u}_k$ to $u$ and choose $k \geq N_0 \in \mathbb{N}$ large enough so that $|\tilde{u}_k - u| \leq (\epsilon/j)^2$. We then have 
 \[
  \int_{A_{i,j}} |\nabla (\tilde{u}_k+ \psi + \eta_{i,j}(u - \tilde{u}_k))|^2 |y|^a \leq C \epsilon
 \]
 From the size of the set $A_{i,j}$ it follows that 
 \begin{equation} \label{e:shell}
   J(\tilde{u}_k + \psi + \eta_{i,j}(u -\tilde{u}_k),A_{i,j}) \leq C\epsilon. 
 \end{equation}

 We will now compare $J(\tilde{u}_k,B_R)$ to $J(\tilde{u}_k + \psi + \eta_{i,j}(u -\tilde{u}_k),B_R)$. By Proposition \ref{p:rescale} we have 
 \[
 J(\tilde{u}_k,B_R) \leq (1+\kappa r^{\alpha}) 
 J(\tilde{u}_k + \psi + \eta_{i,j}(u -\tilde{u}_k),B_R). 
 \]
 We observe that 
 \[
 \begin{aligned}
  J(\tilde{u}_k + \psi + \eta_{i,j}(u -\tilde{u}_k),B_R) &= J(u+\psi,B_{R-\epsilon(j-i)/j}) \\
  &\quad + 
  J(\tilde{u}_k + \psi + \eta_{i,j}(u -\tilde{u}_k),A_{i,j}) +
  J(\tilde{u}_k + \psi,B_R \setminus B_{R-\epsilon(j-(i+1))/j}). 
  \end{aligned}
 \]
 Then cancelling like terms and using the almost monotonicity, we have 
 \[
 \begin{aligned}
  J(\tilde{u}_k,B_{R-\epsilon}) 
 &\leq J(\tilde{u}_k, B_{R-\epsilon(j-(i+1))/j})\\
 &\leq \kappa r_k^{\alpha} J(\tilde{u}_k + \psi + \eta_{i,j}(u -\tilde{u}_k),B_R) 
 + J(u+\psi, B_{R-\epsilon(j-i)/j}) \\
 &\quad + J(\tilde{u}_k + \psi + \eta_{i,j}(u -\tilde{u}_k),A_{i,j}) \\
 &\leq C \kappa r_k^{\alpha}  
 + J(u+\psi, B_{R}) + C\epsilon.  
 \end{aligned}
 \]
 By weak convergence in $H^1(a,B_R)$ we have that 
 \[
  \int_{U} |\nabla u|^2 |y|^a \leq \liminf_{k \to \infty} \int_{U}|\nabla \tilde{u}_k|^2 |y|^a,
 \]
 for all $U \subset B_R$. 
 Also, since $\tilde{u}_k \to u$ uniformly on compact sets, we have that 
 \[
  \int_{\mathcal{B}_{R-\epsilon}}\lambda^+\chi_{\{u>0\}}+\lambda^-\chi_{\{u<0\}}
  \leq \liminf_{k \to \infty}
  \int_{\mathcal{B}_{R-\epsilon}}\lambda^+\chi_{\{\tilde{u}_k>0\}}+\lambda^-\chi_{\{\tilde{u}_k<0\}}.
 \]
 Furthermore, $r_k \to 0$ as $k \to \infty$, so putting together the above inequalites we obtain 
 \[
  J(u,B_{R-\epsilon}) \leq  \liminf_{k \to \infty} J(\tilde{u}_k,B_{R-\epsilon}) 
  \leq \liminf_{k \to \infty} C\kappa r_k^{\alpha} 
  + J(u+\psi, B_{R}) + C \epsilon = J(u+\psi,B_R) + C \epsilon.  
 \]
 Since $\epsilon$ can be made arbitrarily small, we conclude 
 \[
 J(u,B_R) \leq J(u+\psi,B_R). 
 \]
\end{proof}

\section{Nondegeneracy}\label{S:nondege}

In this section, we address the nondegeneracy of almost minimizers which will be instrumental in the study of the free boundary. 

\begin{lemma} \label{l:transfer}
 Let $u$ be an almost minimizer on $B_2$. 
 There exists a constant $C$ depending on $n,a,J(u,B_2)$ such that if $u>0$ on $B_r=B_r(x,0)\subset B_1$, and 
 $u \leq \epsilon r^s$ on $\partial B_r(x,0)$, then 
 \[
   \int_{B_{r/2}(x,0)} |\nabla u|^2 |y|^a \leq C \max\{\epsilon^2, r^{\alpha}\} r^{n}. 
 \]
\end{lemma}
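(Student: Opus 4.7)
The plan is to compare $u$ with its $\mathcal{L}_a$-harmonic replacement $v$ in $B_r = B_r(x,0)$, i.e.\ the unique $v$ with $v-u \in H_0^1(a, B_r)$ and $\mathcal{L}_a v = 0$ in $B_r$. Since $0 \leq u \leq \epsilon r^s$ on $\partial B_r$ (by the positivity hypothesis and the boundary bound), the maximum principle for $\mathcal{L}_a$ forces $0 \leq v \leq \epsilon r^s$ throughout $B_r$. In particular $v \geq 0$ on $\mathcal{B}_r$, so no $\lambda^-$ contribution appears in $J(v, B_r)$, and $|\{v > 0\} \cap \mathcal{B}_r| \leq |\mathcal{B}_r|$.

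I would then plug $v$ into the almost-minimality inequality. Because $u > 0$ on $B_r$, the $\lambda^+$ term for $u$ equals $\lambda^+|\mathcal{B}_r|$, so the measure-theoretic combination $(1+\kappa r^{\alpha})\lambda^+|\{v>0\}\cap\mathcal{B}_r| - \lambda^+|\mathcal{B}_r|$ contributes at most $C\kappa r^{n+\alpha}$. After rearrangement, together with the orthogonality identity $\int_{B_r}|\nabla(u-v)|^2 |y|^a = \int_{B_r}(|\nabla u|^2 - |\nabla v|^2) |y|^a$ (valid since $v$ is $\mathcal{L}_a$-harmonic and $u-v \in H_0^1(a, B_r)$), this yields
\[
\int_{B_r} |\nabla (u-v)|^2 |y|^a \leq \kappa r^{\alpha} \int_{B_r} |\nabla v|^2 |y|^a + C r^{n+\alpha}.
\]
Using that $v$ minimizes the weighted Dirichlet energy among competitors with boundary datum $u$, together with the universal bound $\int_{B_r}|\nabla u|^2 |y|^a \leq C r^n$ from \eqref{e:sqest} (applicable since $B_r \subset B_1$ and $u$ is almost minimizing on $B_2$), the right-hand side collapses to $Cr^{n+\alpha}$.

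To bound $v$ on the smaller ball, I would apply a standard weighted Caccioppoli inequality: testing $\mathcal{L}_a v = 0$ against $\eta^2 v$ for a cutoff $\eta$ with $\eta \equiv 1$ on $B_{r/2}$, $\mathrm{supp}\,\eta \subset B_r$, and $|\nabla \eta| \leq C/r$ gives $\int_{B_{r/2}} |\nabla v|^2 |y|^a \leq Cr^{-2} \int_{B_r} v^2 |y|^a$. Substituting $\|v\|_{L^\infty(B_r)} \leq \epsilon r^s$ and $\int_{B_r}|y|^a \sim r^{n+1+a}$, together with the identity $a + 2s = 1$, this produces the clean estimate $\int_{B_{r/2}} |\nabla v|^2 |y|^a \leq C\epsilon^2 r^n$. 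A triangle inequality then combines the two bounds into the claimed inequality.

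The main subtle point is the bookkeeping of the measure-theoretic term under the multiplicative $(1+\kappa r^{\alpha})$ factor: one needs $v \geq 0$ to kill the $\lambda^-$ piece \emph{and} the elementary estimate $|\{v>0\}\cap\mathcal{B}_r| \leq |\mathcal{B}_r|$ to ensure the residual error is only of order $r^{n+\alpha}$ rather than of order $r^n$. Once this is handled, the argument reduces to the standard Campanato-style comparison with a harmonic replacement, with the weighted setting entering only through the degenerate Caccioppoli inequality and the lucky cancellation $a + 2s = 1$.
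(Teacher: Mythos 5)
Your proof is correct, but it takes a different route than the paper. The paper's proof is a one-liner: it applies the almost-Caccioppoli inequality of Proposition~\ref{p:caccioppoli} directly to $u$ in $B_r$, and then bounds the resulting term $\frac{C}{r^2}\int_{B_r}u^2|y|^a$ by using $u\leq\epsilon r^s$ to estimate $\int_{B_r}u^2|y|^a\leq\epsilon^2 r^{2s}r^{n+1+a}=\epsilon^2 r^{n+2}$. (That estimate implicitly uses the sup bound on all of $B_r$, not merely on $\partial B_r$; in the actual application in Lemma~\ref{l:weaker} the bound does hold throughout the ball, so this causes no harm, but the lemma is stated with the boundary bound only.) You instead introduce the $\mathcal L_a$-harmonic replacement $v$, use the maximum principle for $v$ (which is legitimate since $v$ is genuinely $a$-harmonic) to get $0\leq v\leq\epsilon r^s$ in $B_r$, and then split $\int_{B_{r/2}}|\nabla u|^2|y|^a$ via the orthogonality identity and the triangle inequality into $\int_{B_r}|\nabla(u-v)|^2|y|^a\lesssim r^{n+\alpha}$ (from almost-minimality plus \eqref{e:sqest}) and $\int_{B_{r/2}}|\nabla v|^2|y|^a\lesssim\epsilon^2 r^n$ (from the standard weighted Caccioppoli inequality for $v$). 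Your route is a bit longer and does not exploit Proposition~\ref{p:caccioppoli} already in hand, but it has the advantage of only using the sup bound on $\partial B_r$ as literally stated, since the $L^\infty$ bound propagates into $B_r$ through the maximum principle for the harmonic replacement rather than being assumed for $u$ itself.
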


\begin{proof}
 From Proposition \ref{p:caccioppoli} we have 
 \[
 \begin{aligned}
 \int_{B_{r/2}(x,0)} |\nabla u|^2 |y|^a &\leq C r^{n+\alpha} + \frac{C}{r^2} \int_{B_r(x,0)} u^2 |y|^a \\
 &\leq C r^{n+\alpha} + C r^{-2} \epsilon^2 r^{2s} r^{n+1+a} \\
 &= C r^{n+\alpha} + C\epsilon^2 r^n \\
 &\leq C \max\{\epsilon^2, r^{\alpha}\}r^n. 
 \end{aligned}
 \]
\end{proof}

The following lemma is a weaker nondegeneracy result. 
\begin{lemma} \label{l:weaker}
Let $u$ be an almost minimizer on $B_1$ with $B_r(e_1r,0) \subset \{u>0\}$. Then there exist constants $\tau,r_0$ depending only on $n,s,\kappa, \alpha, J(u,B_1)$ such that if $r  \leq r_0$, then 
\[
 \sup_{B_{r/2}(r e_1,0)} u \geq \tau r^s. 
\]
\end{lemma}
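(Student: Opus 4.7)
The plan is to argue by contradiction: assume $\sup_{B_{r/2}(re_1,0)} u < \tau r^s$ and build a competitor $v$ on $B_{r/2}(re_1,0)$ that (i) kills the positivity of $u$ on a definite portion of the thin space, yielding a gain of order $\lambda^+ r^n$ in the surface term, while (ii) incurring only a negligible cost in the weighted Dirichlet energy. Concretely, I would pick a smooth radial cutoff $\eta$ centered at $(re_1,0)$ with $\eta \equiv 0$ on $B_{r/8}(re_1,0)$, $\eta \equiv 1$ outside $B_{r/4}(re_1,0)$, $0\leq \eta\leq 1$, and $|\nabla \eta|\leq C/r$, and set $v = \eta u$. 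Then $v-u\in H_0^1(a,B_{r/2}(re_1,0))$ and $v\geq 0$ everywhere with $v\equiv 0$ on $\mathcal{B}_{r/8}(re_1)$, so the bookkeeping on the thin space gives
\[
\int_{\mathcal{B}_{r/2}(re_1)}\lambda^+\chi_{\{u>0\}} - \int_{\mathcal{B}_{r/2}(re_1)}\bigl(\lambda^+\chi_{\{v>0\}}+\lambda^-\chi_{\{v<0\}}\bigr) \;\geq\; c_n \lambda^+ r^n.
\]

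For the gradient term, note that $v\equiv u$ off $B_{r/4}(re_1,0)$, so everything reduces to estimating the integral of $|\nabla(\eta u)|^2|y|^a$ over $B_{r/4}(re_1,0)$. Since the assumption $\sup_{B_{r/2}(re_1,0)}u<\tau r^s$ gives in particular $u\leq \tau r^s$ on $\partial B_{r/2}(re_1,0)$, Lemma \ref{l:transfer} yields $\int_{B_{r/4}(re_1,0)}|\nabla u|^2|y|^a\leq C \max\{\tau^2,r^\alpha\} r^n$. Combined with the pointwise estimate $u^2|\nabla\eta|^2\leq C\tau^2 r^{2s}/r^2$ and $\int_{B_{r/4}}|y|^a\leq Cr^{n+1+a}$ (using $a+2s=1$), one obtains
\[
\int_{B_{r/2}(re_1,0)} |\nabla v|^2|y|^a \;-\; \int_{B_{r/2}(re_1,0)} |\nabla u|^2|y|^a \;\leq\; C\max\{\tau^2,r^\alpha\}r^n.
\]
Putting the two estimates together yields $J(u)-J(v)\geq \bigl(c_n\lambda^+ - C\max\{\tau^2,r^\alpha\}\bigr)r^n$.

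Finally, the almost-minimality inequality together with the bound $J(v,B_{r/2}(re_1,0))\leq Cr^n$ (immediate from the above and $\lambda^+\mathcal{H}^n(\mathcal{B}_{r/2})\leq Cr^n$) yields $J(u)-J(v)\leq \kappa r^\alpha J(v)\leq C\kappa r^{n+\alpha}$. Hence
\[
c_n \lambda^+ \;\leq\; C\max\{\tau^2,r^\alpha\} + C\kappa r^\alpha.
\]
Choosing $\tau$ small depending on $n,s,\lambda^+,J(u,B_1)$ so that $C\tau^2\leq c_n\lambda^+/3$, and then $r_0$ small depending additionally on $\alpha,\kappa$ so that $(C+C\kappa)r_0^\alpha\leq c_n\lambda^+/3$, produces a contradiction and proves the lemma.

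The main obstacle is the gradient term: the naive estimate $\int_{B_{r/2}(re_1,0)}|\nabla u|^2|y|^a\leq Cr^n$ coming from \eqref{e:sqest} would leave a cost $Cr^n$ possibly exceeding the surface gain $c_n\lambda^+r^n$. What makes the argument close is Lemma \ref{l:transfer}, which uses precisely the smallness hypothesis on $u$ in $B_{r/2}(re_1,0)$ to beat $Cr^n$ down to $C\max\{\tau^2,r^\alpha\}r^n$, which is made smaller than $c_n\lambda^+r^n$ by choosing $\tau$ and $r_0$ small.
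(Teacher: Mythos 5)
Your proof is correct and takes essentially the same approach as the paper: argue by contradiction, build a competitor that vanishes on a small inner thin ball (the paper uses $v=\min\{u,\epsilon\psi\}$ rather than $v=\eta u$, but this is a cosmetic difference), gain a surface term of order $\lambda^+ r^n$, and control the Dirichlet cost via Lemma \ref{l:transfer} before invoking almost minimality.
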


\begin{proof}
 We apply the usual rescaling of $u(rx)/r^s$, and by Proposition \ref{p:rescale} we may assume we are on $B_1(e_1,0)$. Suppose that 
 \[
  \sup_{B_{1/2}(e_1,0)} u=\epsilon
  \]
  which we will take to be small. 
 Define $\psi(x)$ a cut-off function such that $\psi(x)=0$ for $x \in B_{1/8}(e_1,0)$ and $\psi(x)=1$ for $x \notin B_{1/4}(e_1,0)$. Let $v=\min\{u,\epsilon \psi\}$. We 
 have that $J(u,B_{1/4}(e_1,0))\leq (1+ \kappa r^{\alpha})J(v,B_{1/4}(e_1,0))$. We also have that 
 \[
 \begin{aligned}
 J(u,B_{1/4}(e_1,0)) &= \int_{B_{1/4}(e_1,0)} |\nabla u|^2|y|^a 
 + \int_{\mathcal{B}_{1/4}(e_1)} \lambda^+ \chi_{\{u>0\}} \\
 &= \int_{B_{1/4}(e_1,0)} |\nabla u|^2|y|^a 
 + \lambda^+ |\mathcal{B}_{1/4}(e_1)|,
 \end{aligned}
 \]
 and 
 \[
 \begin{aligned}
 \int_{B_{1/4}(e_1,0)} |\nabla v|^2 |y|^a &= \int_{B_{1/4}(e_1,0) \cap \{u<\psi\}} |\nabla u|^2|y|^a  + \epsilon^2\int_{B_{1/4}(e_1,0) \cap \{u \geq \psi\}} |\nabla \psi|^2|y|^a\\
 \end{aligned}
 \]
 From Lemma \ref{l:transfer} we have 
 \[
  \int_{B_{1/4}(e_1,0)} |\nabla u|^2 |y|^a \leq C \max\{\epsilon^2,r^{\alpha}\},
 \]
 so that 
 \[
  \int_{B_{1/4}(e_1,0)} |\nabla v|^2 |y|^a \leq C \max\{\epsilon^2,r^{\alpha}\}. 
 \]
Then by the almost minimality of $u$ we obtain 
\[
\begin{aligned}
&\int_{B_{1/4}(e_1,0)} |\nabla u|^2|y|^a 
 + \lambda^+ |\mathcal{B}_{1/4}(e_1) | \\
&=J(u,B_{1/4}(e_1,0)) \leq (1+ \kappa r^{\alpha})J(v,B_{1/4}(e_1,0)) \\
&=(1+ \kappa r^{\alpha})\left(\int_{B_{1/4}(e_1,0)} |\nabla v|^2 |y|^a + 
\lambda^+ |\mathcal{B}_{1/4}(e_1)\setminus \mathcal{B}_{1/8}(e_1)| \right)\\
&\leq (1+ \kappa r^{\alpha})\left(C \max\{\epsilon^2,r^{\alpha}\} + \lambda^+|\mathcal{B}_{1/4}(e_1)\setminus \mathcal{B}_{1/8}(e_1)|\right).
\end{aligned}
\]
Thus, 
\[
 |\mathcal{B}_{1/4}(e_1)| \leq (1+\kappa r^{\alpha}) |\mathcal{B}_{1/4}(e_1)\setminus \mathcal{B}_{1/8}(e_1)| + \frac{C}{\lambda^+} \max\{\epsilon^2,r^{\alpha}\}.
\]
We obtain a contradiction if both $r$ and $\epsilon$ are chosen small enough. 
\end{proof}

This next result will transfer this nondegerenate growth to the thin ball. 
\begin{lemma} \label{l:weakthin}
 Let $u$ be an almost minimizer in $B_1$ with $B_r(re_1,0)\subset \{u>0\}$. Then there exist $\tau, r_0>0$ depending only on $n,s,\kappa, \alpha, J(u,B_1),\lambda^+$ such that if $r \leq r_0$, then 
 \[
  \sup_{\mathcal{B}_{r/2}(re_1)} u \geq \tau r^s. 
 \]
\end{lemma}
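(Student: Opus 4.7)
\emph{Plan.} The strategy is to combine the solid-ball nondegeneracy of Lemma \ref{l:weaker} with a Harnack-type spreading applied to an $a$-harmonic replacement of the rescaled almost minimizer. After rescaling to unit scale, $\tilde u$ will be close in $L^\infty$ to its $a$-harmonic replacement $h$ on the interior, and strict positivity of $h$ coupled with the interior Harnack inequality will force $h$, and hence $\tilde u$, to be uniformly bounded below on the thin ball.

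I would set $\tilde u(x,y)=u(rx+re_1,ry)/r^s$. For $r\le r_0$ with $r_0$ sufficiently small (in particular $r_0 \le 1/2$, so that $\overline{B_1(0,0)}$ sits inside the domain of $\tilde u$), the rescaled function $\tilde u$ is well-defined on $B_1(0,0)$, satisfies $\tilde u>0$ on $B_1(0,0)$, has $C^{0,s}$ norm bounded by that of $u$ (via Theorem \ref{T:s3}), and is an almost minimizer of $J$ on $B_1$ with gauge $\kappa r^\alpha$ (by Proposition \ref{p:rescale}). Lemma \ref{l:weaker} applied to $u$ on $B_r(re_1,0)$ then yields, after rescaling, $\sup_{B_{1/2}(0)}\tilde u \ge \tau$ for a constant $\tau$ depending only on $n,s,\kappa,\alpha, J(u,B_1),\lambda^+$.

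Next I would introduce the $a$-harmonic replacement $h\in H^1(a,B_1)$ of $\tilde u$, characterized by $\operatorname{div}(|y|^a\nabla h)=0$ weakly in $B_1$ and $h=\tilde u$ on $\partial B_1$. Since $\tilde u>0$ on $\mathcal{B}_1$ and $h\ge 0$ by the weak maximum principle for the weighted divergence operator, the measure terms in $J$ coincide for $\tilde u$ and $h$ (both equal $\lambda^+|\mathcal{B}_1|$), so Proposition \ref{p:rescale} specializes to the pure Dirichlet comparison which, together with the $a$-harmonicity of $h$ and the vanishing of $\tilde u-h$ on $\partial B_1$, gives
\[
 \int_{B_1}|\nabla(\tilde u-h)|^2|y|^a \;\le\; C r^\alpha,
\]
where $C$ depends on $n,a,\lambda^+,\kappa$ and the a priori bound \eqref{e:sqest}. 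A weighted Poincaré inequality then yields $\|\tilde u-h\|_{L^2(|y|^a,B_1)}^2 \le Cr^\alpha$, and interpolating this $L^2_{|y|^a}$ bound with the uniform $C^{0,s}$ bounds of $\tilde u$ and $h$ (the latter from standard interior regularity for $a$-harmonic functions) produces $\|\tilde u-h\|_{L^\infty(B_{1/2}(0))}\le Cr^\beta$ for some $\beta=\beta(n,a,s,\alpha)>0$.

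Finally, the strong maximum principle gives $h>0$ in $B_1$ (the alternative $h\equiv 0$ is ruled out by the $L^\infty$ estimate above combined with $\sup_{B_{1/2}}\tilde u\ge\tau$ for $r$ small). The interior Harnack inequality for the $A_2$-weighted operator then delivers $\sup_{B_{1/2}}h\le C_H\inf_{B_{1/2}}h$, with $C_H=C_H(n,a)$. Combined with $\sup_{B_{1/2}}h\ge \tau-Cr^\beta\ge \tau/2$ for $r$ small, this implies $\inf_{B_{1/2}}h\ge \tau/(2C_H)$, and in particular $h(x,0)\ge \tau/(2C_H)$ for every $x\in\mathcal{B}_{1/2}(0)$. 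Transferring back to $\tilde u$ via $L^\infty$ closeness gives $\tilde u(x,0)\ge \tau/(4C_H)$ on $\mathcal{B}_{1/2}(0)$ once $r$ is sufficiently small, which upon unscaling produces the desired lower bound $\sup_{\mathcal{B}_{r/2}(re_1)}u\ge \tau'r^s$. The principal technical subtlety is the weighted $L^2$-to-$L^\infty$ interpolation, where one must verify that $\int_{B_\rho(p)}|y|^a \gtrsim \rho^{n+1+a}$ (with the improved lower bound $\rho^{n+1}$ in the regime $\operatorname{dist}(p,\{y=0\})\ge \rho$) so that the interpolation exponent $\beta$ stays positive; the $A_2$ property of $|y|^a$ for $|a|<1$ underlies this step as well as the weighted Poincaré inequality and the strong maximum principle and Harnack inequality used above.
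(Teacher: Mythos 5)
Your argument is correct, but it follows a genuinely different route from the one in the paper. The paper argues by compactness and contradiction: assume a sequence $u_k$, $r_k\to 0$ with $\sup_{\mathcal{B}_{r_k/2}(r_ke_1)}u_k\le r_k^s/k$, rescale, extract a uniform-on-compacts limit $u_0\ge 0$ (using the $C^{0,s}$ regularity), pass the almost-minimality to the limit to conclude $u_0$ is $a$-harmonic on $B_1(e_1,0)$, use Lemma \ref{l:weaker} to see $u_0\not\equiv 0$, and then invoke the strong minimum principle because $u_0$ vanishes on the interior thin ball $\mathcal{B}_{1/2}(e_1)$ — a contradiction. Your proof is a direct, quantitative version of the same underlying idea: instead of extracting a limit, you compare the rescaled function $\tilde u$ on $B_1$ to its $a$-harmonic replacement $h$, use almost-minimality (Proposition \ref{p:rescale}) together with the fact that the free-boundary measure term for $h$ cannot exceed that of $\tilde u$ (since $\tilde u>0$ on $\mathcal{B}_1$) to get $\int_{B_1}|\nabla(\tilde u-h)|^2|y|^a\le Cr^\alpha$, convert this to $\|\tilde u-h\|_{L^\infty(B_{1/2})}\le Cr^\beta$ via weighted Poincar\'e and $L^2$-to-$L^\infty$ interpolation against the $C^{0,s}$ bound (the step where you correctly flag the need for $\int_{B_\rho(p)}|y|^a\gtrsim\rho^{n+1+a}$), and then bootstrap the Fabes--Kenig--Serapioni Harnack inequality for $h\ge 0$ from $\sup_{B_{1/2}}h\gtrsim\tau$ to a pointwise lower bound on $\mathcal{B}_{1/2}$, transferring back to $\tilde u$. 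Both approaches deliver the lemma; the contradiction argument in the paper is shorter and does not need the weighted Poincar\'e inequality, the interpolation step, or the Harnack inequality explicitly, whereas your argument is fully quantitative — it actually produces the pointwise bound $\tilde u(\cdot,0)\ge c$ on all of $\mathcal{B}_{1/2}(0)$ rather than only the supremum, and it identifies an explicit rate $r^\beta$ for the closeness to the replacement, which could be reused elsewhere. One small caveat: you assert the measure terms of $\tilde u$ and $h$ "both equal $\lambda^+|\mathcal{B}_1|$"; strictly, the measure term for $h$ is $\lambda^+|\{h>0\}\cap\mathcal{B}_1|\le\lambda^+|\mathcal{B}_1|$, which is exactly the inequality you need (the extra nonnegative term only helps), so the conclusion $\int_{B_1}|\nabla(\tilde u-h)|^2|y|^a\le Cr^\alpha$ still follows — just phrase it as an inequality rather than an equality.
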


\begin{proof}
 Suppose by contradiction that the result is not true. Then there exist a sequence of almost minimizers $u_k$ and radii $r_k \to 0$ such that 
 $B_{r_k}(r_k e_1,0)\subset \{u>0\}$ but 
 \[
 \sup_{\mathcal{B}_{r_k/2}(r_ke_1)} u_k \leq r_k^s/k.
 \]
 We rescale by letting $\tilde{u}_k = u_k(r_k x)/r_k^s$. From optimal regularity, we have that $\tilde{u}_k$ is universally bounded and will converge on compact sets of $B_1(e_1,0)$ to $u_0 \geq 0$. Furthermore, $u_0(x,0)=0$ for any $x \in \mathcal{B}_{1/2}(e_1)$. Lemma \ref{l:weaker} allows us to conclude that $u_0$ is not identically zero on $B_{1/2}(e_1,0)$.  Finally, by the almost minimality of $u_k$ we have 
 \[
  \int_{B_{1/2}(e_1,0)} |\nabla \tilde{u}_k|^2 |y|^a 
  \leq (1+ \kappa r_k^{\alpha})\int_{B_{1/2}(e_1,0)}  |\nabla (\tilde{u}_k+\phi)|^2 |y|^a + \kappa r_k^{\alpha} |\mathcal{B}_{1/2}(e_1)|
 \]
 for any smooth $\phi$ that vanishes on $\partial B_{1/2}(e_1,0)$. 
 
 Letting $k \to \infty$, so that $r_k \to 0$, we conclude that $u_0$ minimizes the weighted Dirichlet energy, so that 
 $u_0$ is $a$-harmonic on $B_1(e_1,0)$. Moreover, $u_0$ is not identically zero, but achieves a minimum of zero at an interior point ($u_0(x,0)=0$ for any $x\in \mathcal{B}_{1/2}(e_1)$), thus reaching a contradiction. 
\end{proof}






The next Lemma is the final step before our main result of this section, Theorem \ref{t:thin}.

\begin{lemma} \label{l:increase}
 Let $u$ be an almost minimizer in $B_{rM}(0,0)$ with $u(0,0)=0$. There exist universal $\lambda, M,r_0>0$ such that if $r\leq r_0$ and $B_r(re_1,0)\subset \{u>0\}$, then 
 \[
  \sup_{\mathcal{B}_{rM}(0)} u \geq (1+\lambda) \sup_{\mathcal{B}_{r/2}(re_1)} u. 
 \]
\end{lemma}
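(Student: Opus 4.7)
The plan is to argue by contradiction and blow-up. Assume the conclusion fails; then we can extract sequences $\lambda_k \downarrow 0$, $M_k \uparrow \infty$, $r_k \downarrow 0$, and almost minimizers $u_k$ on $B_{r_k M_k}(0,0)$ with $u_k(0,0)=0$, $B_{r_k}(r_k e_1, 0)\subset\{u_k>0\}$, and
\[
\sup_{\mathcal{B}_{r_k M_k}(0)} u_k \; < \; (1+\lambda_k)\,\sup_{\mathcal{B}_{r_k/2}(r_k e_1)} u_k.
\]
Rescale by $\tilde u_k(x,y) = u_k(r_k x, r_k y)/r_k^s$; Proposition \ref{p:rescale} shows each $\tilde u_k$ is an almost minimizer on $B_{M_k}(0,0)$ with gauge constant $\kappa r_k^{\alpha}\to 0$. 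Set $m_k := \sup_{\mathcal{B}_{1/2}(e_1)}\tilde u_k$. The rescaled hypotheses become $\tilde u_k(0,0) = 0$, $B_1(e_1,0)\subset\{\tilde u_k>0\}$, and $\sup_{\mathcal{B}_{M_k}(0)}\tilde u_k\leq (1+\lambda_k) m_k$. Lemma \ref{l:weakthin} gives $m_k\geq \tau>0$, and Theorem \ref{T:s3} combined with $\tilde u_k(0,0)=0$ provides a universal upper bound $m_k\leq C$.

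The next step is to pass to the limit. By Lemma \ref{L:limits}, along a subsequence $\tilde u_k\to u_0$ uniformly on compact sets, where $u_0$ is a global minimizer of $J$ on $\R^{n+1}$. Passing to the limit yields $u_0(0,0)=0$, $u_0\geq 0$ on $B_1(e_1,0)$, and, writing $S := \sup_{\mathcal{B}_{1/2}(e_1)} u_0\in [\tau,C]$, the crucial global bound
\[
\sup_{\mathcal{B}_M(0)} u_0 \;\leq\; S \qquad \text{for every } M>0.
\]
Thus $u_0$ is a nontrivial global minimizer whose trace on the entire thin space is uniformly bounded above by $S$.

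To finish, I invoke the nondegeneracy property available for true minimizers of the fractional ACF functional (essentially Lemma \ref{l:weakthin} applied with $\kappa \equiv 0$, so that the smallness restriction $r\leq r_0$ disappears): at every free boundary point $(z_0,0)\in F^+(u_0)$ and every $R>0$,
\[
\sup_{\mathcal{B}_R(z_0)} u_0 \;\geq\; c\,R^s.
\]
Since $u_0(0,0) = 0$ while $u_0 > 0$ on an open subset of $\mathcal{B}_{1/2}(e_1)$, the set $F^+(u_0)$ is nonempty by continuity, and letting $R\to\infty$ forces $u_0$ to be unbounded on the thin space, contradicting $u_0\leq S$.

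The main obstacle I anticipate is making the last step rigorous: upgrading the almost-minimizer nondegeneracy of Lemma \ref{l:weakthin} to an $R^s$-growth estimate valid \emph{at all scales} for the limit minimizer $u_0$. This is standard for true minimizers and should follow either from the minimizer theory developed in \cite{A} or by inspecting the proof of Lemma \ref{l:weakthin} with $\kappa\equiv 0$, where the auxiliary comparison and Caccioppoli arguments no longer require smallness of $r$. A secondary subtlety is verifying that the convergence $\tilde u_k \to u_0$ and the use of Lemma \ref{L:limits} preserve the open condition $B_1(e_1,0)\subset\{\tilde u_k>0\}$ strongly enough to produce an open positivity set for $u_0$ in $\mathcal{B}_{1/2}(e_1)$; this is handled via the uniform $C^{0,\gamma}$ convergence combined with the lower bound $m_k \geq \tau$.
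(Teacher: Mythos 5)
Your blow-up framework matches the paper's proof exactly: extract contradicting sequences, rescale by $r_k^s$, invoke Lemma \ref{L:limits} to get a limiting local minimizer $\tilde u_0$, use Lemma \ref{l:weakthin} to ensure $\tilde u_0$ is nontrivial, and record $\tilde u_0(0,0)=0$. The divergence is in how the final contradiction is extracted, and there the proposal has a genuine gap.

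The paper does not use nondegeneracy of $\tilde u_0$ at all. It observes that the contradiction hypothesis $\sup_{\mathcal{B}_{M_k}(0)}\tilde u_k \leq (1+\epsilon_k)\sup_{\mathcal{B}_{1/2}(e_1)}\tilde u_k$ passes to the limit as $\sup_{\mathbb{R}^n}\tilde u_0(\cdot,0) = \tilde u_0(x_0,0) \geq \tau$ for some $x_0 \in \overline{\mathcal{B}_{1/2}(e_1)}$, i.e.\ the trace attains a \emph{global interior maximum}. Because $\tilde u_0$ is a true minimizer, hence $a$-harmonic in $\{\tilde u_0>0\}$, the even reflection gives $(-\Delta)^s\hat{\tilde u}_0(x_0)=0$; but at a global max the singular-integral formula for $(-\Delta)^s$ then forces $\hat{\tilde u}_0$ to be constant, contradicting $\tilde u_0(0,0)=0$. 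That argument is short and fully self-contained within the paper.

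Your route instead needs an $R^s$ growth bound at a free boundary point of $u_0$ for \emph{every} $R>0$. That statement is not ``Lemma \ref{l:weakthin} with $\kappa\equiv 0$'': Lemma \ref{l:weakthin} presupposes a full ball $B_r(re_1,0)\subset\{u>0\}$ of radius comparable to the scale you want to grow at, and you have no such ball once $R>1$. The growth estimate you actually need is Theorem \ref{t:thin} (nondegeneracy at an arbitrary free boundary point), whose proof is built on Lemma \ref{l:increase} — the very statement you are proving — so invoking it here is circular. You could break the circularity by citing the nondegeneracy from \cite{A} or \cite{CRS} for true minimizers, but that imports external machinery the paper deliberately avoids, and you would still need to verify that the nondegeneracy constant does not degenerate as $R\to\infty$ (which requires the growth bound $|u_0(x,y)|\leq C|(x,y)|^s$ to control the normalized energy at every scale — the paper records this, but your proposal does not establish it). I recommend replacing the last step by the paper's interior-max plus fractional strong maximum principle argument, which uses the precise $\sup$-comparison built into the contradiction hypothesis rather than a scale-invariant growth estimate.
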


\begin{proof}
  Suppose by contradiction that the result is not true. Then there exists a sequence of points $(x_k,0)$ and of almost minimizers $u_k$ on 
 $B_{r_k M_k}$
 with $r_k \to 0$ and 
 \[
 \sup_{\mathcal{B}_{r_k M_k}(0)} u_k =u_k(x_k,0) \leq (1+ \epsilon_k) 
 \sup_{\mathcal{B}_{r_k/2}(r_k e_1)} u_k
 \]
 with $M_k \to \infty$ and $\epsilon_k \to 0$. It is also assumed that $u_k(0,0)=0$. 
 By Lemma \ref{L:limits}, a subsequence of the rescaled functions 
 \[
  \tilde{u}_k(x) := \frac{u_k (r_k x)}{r_k ^s}
 \]
 converges (uniformly on compact subsets) to $\tilde{u}_0$ a local minimizer on $\mathbb{R}^{n+1}$. From the uniform convergence, we conclude that $\tilde{u}_0(0,0)=0$. Moreover, there exists a point $(x_0,0) \in \mathcal{B}_{1/2}(e_1,0)$  such that $\sup_{\mathbb{R}^n} \tilde{u}_0(x,0)= \tilde{u}_0(x_0,0)\geq \tau$, where the last inequality follows from Lemma \ref{l:weakthin}. 
 
 From estimates of the sequence $\tilde{u}_k$ we also obtain 
 \[
  |\tilde{u}_0(x,y)| \leq C |(x,y)|^s \quad \text{ for all } (x,y) \in \mathbb{R}^{n+1}. 
 \]
 Since $\tilde{u}_0$ is a minimizer and not just an almost minimizer, then $\mathcal{L}_a \tilde{u}_0=0$ in $\{u>0\}$. Since $\tilde{u}_0$ is even in the $y$-variable, from the explanation in Section \ref{s:fraclap}, we have that 
 \[
 -c \lim_{y \to 0} y^a \partial_y \tilde{u}_0(x_0,y)=0.
 \]
 Then from the extension principle explained in Section \ref{s:fraclap}, the restriction $\hat{\tilde{u}}_0(x)=\tilde{u}_0(x,0)$ satisfies 
 \[
  (-\Delta)^s \hat{\tilde{u}}_0(x_0) = -c \lim_{y \to 0} y^a \partial_y \tilde{u}_0(x_0,y)=0.
 \]
From the strong maximum principle for the fractional Laplacian, we conclude that $\tilde{u}_0(x)$ is constant everywhere on the connected component of $\{u(\cdot,0)>0\}$ containing $(x_0,0)$, and the constant is greater than or equal to $\tau$. Since $u$ is continuous this contradicts the fact that $\tilde{u}_0(0,0)=0$.  
\end{proof}

We now prove the main result of this section.
\begin{theorem} \label{t:thin}
 Let $u$ be an almost minimizer on $B_1$. If $0 \in F^+(u)$, then there exists a universal $c>0$ depending on $n,s,J(u,B_1)$ such that
 \[
  \sup_{\mathcal{B}_r(0)}u \geq c r^s \ \text{ for } r \leq 1/2. 
 \]
\end{theorem}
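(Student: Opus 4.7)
My plan is to iterate Lemma \ref{l:increase}, compounding the $(1+\lambda)$ factor across several scales in order to turn the weaker nondegeneracy of Lemma \ref{l:weakthin} (which presupposes a prescribed ball in $\{u>0\}$) into sharp nondegeneracy at the free boundary point $0$. I first produce, from the hypothesis $0\in F^+(u)$, arbitrarily small balls inside $\{u>0\}$ touching the origin: for any $\delta>0$ there is $p\in\mathcal{B}_\delta(0)$ with $u(p,0)>0$, and sliding along the segment $t\mapsto tp$ and using the continuity of $t\mapsto |tp|-\mathrm{dist}(tp,F^+(u))$, an intermediate value argument produces a point $p_0$ on the segment with $\rho_0:=|p_0|=\mathrm{dist}(p_0,F^+(u))$, so that $B_{\rho_0}(p_0,0)\subset\{u>0\}$ and $0\in\partial B_{\rho_0}(p_0,0)$. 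After rotating so that $p_0=\rho_0 e_1$, Lemma \ref{l:increase} combined with Lemma \ref{l:weakthin} yields
\[
\sup_{\mathcal{B}_{\rho_0 M}(0)}u \;\geq\; (1+\lambda)\,\tau\,\rho_0^s.
\]

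To amplify across scales, I let $p_1\in\mathcal{B}_{\rho_0 M}(0)$ attain the supremum above. The optimal $C^{0,s}$ regularity of Theorem \ref{T:s3} forces $\mathrm{dist}(p_1,F^+(u))\geq\eta\rho_0$ with $\eta=[(1+\lambda)\tau/C_0]^{1/s}$, giving a ball $B_{\eta\rho_0}(p_1,0)\subset\{u>0\}$. Picking $q_1\in F^+(u)\cap\partial B_{\eta\rho_0}(p_1,0)$, translating $q_1$ to the origin, and rotating, a second application of Lemma \ref{l:increase} produces
\[
\sup_{\mathcal{B}_{\eta\rho_0 M}(q_1)}u \;\geq\; (1+\lambda)\,u(p_1,0) \;\geq\; (1+\lambda)^2\,\tau\,\rho_0^s.
\]
Iterating $k$ times gives free boundary points $q_k$ and $\sup_{\mathcal{B}_{\eta^k\rho_0 M}(q_k)}u\geq(1+\lambda)^{k+1}\,\tau\,\rho_0^s$, with the centers $q_k$ displaced from $0$ by a controlled geometric sum.

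Given the target radius $r\leq 1/2$, I then select $\rho_0$ and $k$ so that the enclosing ball $\mathcal{B}_{\eta^k\rho_0 M}(q_k)$ fits inside $\mathcal{B}_r(0)$ and has radius comparable to $r$ (roughly $\eta^k\rho_0\sim r/M$); unwinding the resulting relations yields $\sup_{\mathcal{B}_r(0)}u\geq cr^s$ for a universal $c$. The main obstacle is the bookkeeping of the iteration: each application of Lemma \ref{l:increase} both multiplies the supremum by $(1+\lambda)$ and shifts the center $q_k$ by an amount scaled by $\eta^k\rho_0 M$, so one must verify that the cumulative displacement of $q_k$ remains controlled relative to $r$ and, more subtly, that the compound constant $(1+\lambda)^{k+1}\rho_0^s$ rescales exactly to a multiple of $r^s$ (with the multiple independent of $r$) at the stopping index. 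This balancing is ultimately dictated by the Hölder exponent $s$ of Theorem \ref{T:s3}, which constrains the growth factor $\eta$ against the radius scaling $M$ of Lemma \ref{l:increase} and makes the iteration consistent and sharp.
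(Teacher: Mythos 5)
Your overall strategy --- start at a point of positivity near the origin, apply Lemma \ref{l:increase} to jump to a point where $u$ is larger by a factor $(1+\lambda)$, and iterate --- is exactly the strategy of the paper. The gap is in the bookkeeping, and you correctly flagged that as "the main obstacle," but the scale analysis you propose is inverted and will not close.

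The most serious problem is the claim that the balls $\mathcal{B}_{\eta^k\rho_0 M}(q_k)$ with $\eta=[(1+\lambda)\tau/C_0]^{1/s}<1$ control the iteration. The point $p_k$ achieving the supremum at stage $k$ satisfies $u(p_k,0)\geq(1+\lambda)^k\tau\rho_0^s$, and by the $C^{0,s}$ bound this forces
\[
d_k:=d(p_k,F^+(u))\;\geq\;\Bigl(\tfrac{(1+\lambda)^k\tau}{C_0}\Bigr)^{1/s}\rho_0
= (1+\lambda)^{k/s}\bigl(\tfrac{\tau}{C_0}\bigr)^{1/s}\rho_0,
\]
so the distances to the free boundary, and hence the radii on which Lemma \ref{l:increase} is applied, \emph{grow} geometrically like $(1+\lambda)^{k/s}$, whereas $\eta^k\rho_0=(1+\lambda)^{k/s}(\tau/C_0)^{k/s}\rho_0$ decays relative to $d_k$ by a factor $(C_0/\tau)^{(k-1)/s}\to\infty$. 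Relatedly, you cannot in general pick $q_1\in F^+(u)\cap\partial B_{\eta\rho_0}(p_1,0)$: the free boundary may be strictly farther than $\eta\rho_0$ from $p_1$, so the tangent free boundary point lives on $\partial B_{d_1}(p_1,0)$, not on the smaller sphere, and Lemma \ref{l:increase} must be applied at scale $d_1$, producing the ball $\mathcal{B}_{d_1 M}(q_1)$ rather than $\mathcal{B}_{\eta\rho_0 M}(q_1)$. Since the $d_k$ and the displacements $|q_{k+1}-q_k|\lesssim Md_k$ both grow, the "controlled geometric sum" is dominated by its last (largest) term, and the closing balance $\eta^k\rho_0\sim r/M$ with $\rho_0$ small has no solution.

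The paper resolves this not by tracking the radii at all, but by a telescoping estimate on the chain of points $x_k$: writing $u(x_{k_0},0)-u(x_1,0)=\sum(u(x_{k+1},0)-u(x_k,0))\geq\lambda\sum u(x_k,0)$, then using $u(x_k,0)\gtrsim r_k^s$ (which follows by feeding Lemma \ref{l:weakthin} into the previous step of the chain together with $r_k\leq Mr_{k-1}$), then $r_k^s\gtrsim|x_{k+1}-x_k|^s\geq|x_{k+1}-x_k|$ since the steps are small, so the sum telescopes to $|x_{k_0}|-|x_2|$. This sidesteps all explicit control of the $d_k$. If you replace your scale-tracking with this telescoping sum the argument closes; as written, the iteration scales and the claimed geometric displacement bound are both false.
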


\begin{proof}
 We begin by assuming (by rescaling) that $u$ is an almost minimizer on $B_M$ with $M$ as in Lemma \ref{l:increase}. Since the origin is a free boundary point, we choose $x_1 \in \mathcal{B}_{M^{-4}} \cap \{u(\ \cdot \ , 0)>0\}$. Let $r_1 = d(x_1,F(u))$, and let $\tilde{x}_1 \in F(u)$ such that 
 $d(x_1,\tilde{x}_1)=r_1$. By Lemma \ref{l:increase}, there exists $x_2 \in \mathcal{B}_{r_1 M}(\tilde{x}_1)$ with $u(x_2,0) \geq (1+\lambda)u(x_1,0)$.
 We now proceed in the same manner to inductively choose points $x_k$ that satisfy 
 \begin{itemize}
     \item $u(x_{k+1},0)\geq (1+\lambda) u(x_k,0)$ 
     \item if $r_k=d(x_k, F(u))$ and $\tilde{x}_k$ is a free boundary point realising the distance, then $x_{k+1} \in \mathcal{B}_{Mr_k}(\tilde{x}_k)$.
 \end{itemize}
 Notice that $|x_{k+1}-x_{k}|\leq (M+1)r_k$. The induction ends at the index $k_0$ when the sequence $x_k$ leaves $\mathcal{B}_1$, which is guaranteed since the sequence $u(x_k,0)$ grows geometrically and $u$ is bounded on $B_2$. We have 
 \[
  \begin{aligned}
   u(x_{k_0},0)&= \left(\sum_{1< k < k_0} u(x_{k+1},0)-u(x_k,0)\right) + u(x_1)  \\
   &\geq \lambda \sum_{1<k < k_0} u(x_k,0)\geq C \lambda \sum_{1<k < k_0} d(x_k, F(u))^s \geq \frac{C}{(M+1)^s} \lambda \sum_{1<k < k_0} |x_{k+1}-x_{k}|^{s} \\
   &\geq C_1 \lambda \sum_{1<k < k_0} |x_{k+1}-x_{k}| \geq C_1 (|x_{k_0}| -|x_2|)\\
   &\geq C_2 |x_{k_0}|. 
  \end{aligned}
 \]
 This achieves the result for $r=1$. By rescaling we achieve the result for $r \leq 1$. 
\end{proof}

\section{Separation of the free boundaries}\label{S:separation}

In this section, we prove that the two free boundaries of the positive and negative phases for almost minimizers cannot touch, similarly to what happens with minimizers (see \cite{A}). 
\begin{theorem} \label{t:sep1}
 Let $u$ be an almost minimizer in $B_1$. Then $F^+(u) \cap F^-(u)=\emptyset. $
\end{theorem}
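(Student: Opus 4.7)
The plan is to argue by contradiction and reduce the almost-minimizer case to the minimizer case by a blow-up at a putative two-phase point. Suppose, toward a contradiction, that there is some $x_0 \in F^+(u) \cap F^-(u)$; after translation assume $x_0 = (0,0)$. Consider the blow-up sequence
\[
u_r(x,y) = \frac{u(rx,ry)}{r^s}, \qquad r \downarrow 0.
\]
By Proposition \ref{p:rescale}, each $u_r$ is an almost minimizer on a ball of radius $1/r$ with gauge constant $\kappa r^\alpha \to 0$. By Theorem \ref{T:s3} and the weighted energy bound \eqref{e:sqest}, the family $\{u_r\}$ satisfies the uniform $L^2(|y|^a)$-gradient bound required by Lemma \ref{L:limits}. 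Passing to a subsequence $r_k \downarrow 0$, that lemma produces a local minimizer $u_0$ of $J$ on all of $\mathbb{R}^{n+1}$, with $u_{r_k} \to u_0$ locally uniformly.

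The crucial step is to show that $0 \in F^+(u_0) \cap F^-(u_0)$, i.e. that the two-phase structure survives in the limit. Continuity and $u(0,0) = 0$ give $u_0(0,0) = 0$. Since $0 \in F^+(u)$, Theorem \ref{t:thin} yields $\sup_{\mathcal{B}_\rho(0)} u \geq c\rho^s$ for every $\rho \leq 1/2$. Rescaling this bound gives, for every fixed $\rho > 0$ and every $k$ large enough so that $\rho r_k \leq 1/2$,
\[
\sup_{\mathcal{B}_\rho(0)} u_{r_k}^+ \;=\; r_k^{-s} \sup_{\mathcal{B}_{\rho r_k}(0)} u^+ \;\geq\; c\rho^s.
\]
Uniform convergence then transfers this to $\sup_{\mathcal{B}_\rho(0)} u_0^+ \geq c\rho^s$ for all $\rho > 0$, so $u_0^+$ is nontrivial in every neighborhood of the origin; combined with $u_0(0,0) = 0$ this gives $0 \in F^+(u_0)$. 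Applying the identical argument to $-u$ (which is an almost minimizer of the same functional with $\lambda^+$ and $\lambda^-$ interchanged) yields $0 \in F^-(u_0)$.

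To finish, invoke the main theorem of \cite{A}: for (local) minimizers of the functional in \eqref{energy} the positive and negative free boundaries are disjoint, $F^+(u_0) \cap F^-(u_0) = \emptyset$. This contradicts $0 \in F^+(u_0) \cap F^-(u_0)$ and forces $F^+(u) \cap F^-(u) = \emptyset$.

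The main obstacle is the \emph{transfer of nondegeneracy through the blow-up}: one must verify that the lower bound from Theorem \ref{t:thin} survives after dividing by $r^s$ and passing to a uniform limit, simultaneously for both phases, and that the limit object falls exactly under the hypotheses of the minimizer separation result of \cite{A}. All remaining ingredients (compactness, upgrade of almost minimizers to minimizers in the limit, continuity of the trace) are furnished by the tools already developed in Sections \ref{S:optimal} and \ref{S:nondege}.
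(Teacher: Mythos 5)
Your proposal is correct and follows essentially the same route as the paper: blow up at a hypothetical two-phase point, use Lemma \ref{L:limits} to upgrade the limit to a genuine local minimizer, transfer the two-phase structure to the limit via the thin-space nondegeneracy of Theorem \ref{t:thin}, and contradict the minimizer separation theorem of \cite{A}. Your write-up is somewhat more explicit than the paper's (spelling out the rescaled nondegeneracy bound and the symmetric argument for the negative phase), but the argument is the same.
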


\begin{proof}
 Suppose by contradiction that there exists $x_0 \in F^+(u) \cap F^-(u)$. We rescale $u$ at $x_0$ in the following way:
 \[
  u_r(x):= \frac{u(rx+x_0,y)}{r^s}. 
 \]
 By the $C^{0,s}$ regularity, we conclude that for some sequence $r_k \to 0$, that $u_{r_k} \to u_0$ in all compact subsets of $\mathbb{R}^n$. Furthermore, by Lemma \ref{L:limits}, $u_0$ is a minimizer  (not just an almost minimizer) on all balls centered on the thin space. From nondegeneracy in the thin space, Theorem \ref{t:thin},  we obtain that $0 \in F^+(u) \cap F^-(u)$. This contradicts Theorem 6.1 in \cite{A}.  
\end{proof}

We now give a separation in the full thick space of $\mathbb{R}^{n+1}$. 
\begin{theorem} \label{t:sep2}
 Let $u$ be an almost minimizer in $B_1$. Let $0 \in F^+(u) \ (F^-(u))$. Then there exists some $r$  $($depending on $u)$ such that $u(x,y) \geq (\leq) \ 0$ for any $(x,y)$ with $|(x,y)| \leq r$. 
\end{theorem}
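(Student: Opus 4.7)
The plan is to argue by contradiction using a two-stage blow-up. Assume $0\in F^+(u)$; by Theorem \ref{t:sep1} there is $\rho>0$ with $u(\cdot,0)\ge 0$ on $\mathcal{B}_\rho(0)$. Suppose, toward a contradiction, that every ball $B_r(0)$ contains a point where $u<0$, and extract a sequence $X_k=(x_k,y_k)\to 0$ with $u(X_k)<0$. Continuity of $u$ together with $u\ge 0$ on $\mathcal{B}_\rho$ forces $y_k\neq 0$ for large $k$, and by the even symmetry of $u$ in $y$ we may assume $y_k>0$.

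Set $r_k:=|X_k|$ and consider the blow-ups $u_k(X):=u(r_kX)/r_k^s$. By Proposition \ref{p:rescale}, the energy bound \eqref{e:sqest}, and Lemma \ref{L:limits}, a subsequence converges locally uniformly on $\R^{n+1}$ to a local minimizer $u_0$ of $J$, with $u_0(\cdot,0)\ge 0$ on $\R^n$. I would then show that $u_0\ge 0$ on all of $\R^{n+1}$ by a Phragm\'en--Lindel\"of argument applied to $w:=u_0^-$: $w$ is $a$-subharmonic in $\{y>0\}$ and vanishes on the thin space; letting $\tilde w_R$ be the $a$-harmonic extension to $B_R^+$ of the boundary values of $w$, subharmonicity gives $w\le \tilde w_R$ in $B_R^+$, while Proposition \ref{p:bhpa} applied to the nonnegative $a$-harmonic $\tilde w_R$ with zero trace on $\mathcal{B}_R$, combined with $\|w\|_{L^\infty(\partial B_R^+)}\le CR^s$ from Theorem \ref{T:s3}, yields $\tilde w_R(X_0)\le C y_0^{2s}R^{-s}\to 0$ as $R\to\infty$ at any fixed $X_0=(x_0,y_0)$. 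Hence $w\equiv 0$, i.e., $u_0\ge 0$. The strong maximum principle for $\mathcal{L}_a$ on the connected open set $\{y>0\}$, combined with the nondegeneracy of Theorem \ref{t:thin} (which guarantees $u_0\not\equiv 0$), then gives $u_0>0$ strictly in $\{y>0\}$.

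Passing to a further subsequence, $\bar X_k:=X_k/r_k \to \bar X_0$. If $\bar y_0>0$, the locally uniform convergence of $u_k$ to $u_0$ near $\bar X_0$ forces $u_k(\bar X_k)\to u_0(\bar X_0)>0$, contradicting $u_k(\bar X_k)<0$. So $y_k/r_k\to 0$: the negative points approach $0$ tangentially, which is the main obstacle. My plan to close it is a second blow-up centered at a nearby free-boundary point. Pick $(x_k^*,0)\in F^+(u)$ realizing $\delta_k:=\mathrm{dist}(X_k,F^+(u))$, noting $y_k\le \delta_k\le r_k$, set $d_k:=|x_k-x_k^*|$, and define $\hat u_k(X):=u(\delta_kX+(x_k^*,0))/\delta_k^s$ and $\hat X_k:=(X_k-(x_k^*,0))/\delta_k$. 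By Lemma \ref{L:limits} a further subsequence produces a global minimizer limit $\hat u_0\ge 0$ (by the Phragm\'en--Lindel\"of argument above). The delicate point is the comparability $\delta_k\asymp y_k$, equivalently $d_k\le C y_k$, which keeps $\hat y_k=y_k/\delta_k$ bounded away from zero: when $(x_k,0)\in\{u(\cdot,0)>0\}$ this follows from the $C^{0,s}$-continuity combined with $u(x_k,0)\ge c d_k^s$ obtained by chaining the interior Harnack inequality for $\mathcal{L}_a$ on balls inside $\mathcal{B}_{d_k}(x_k^*)\subset\{u(\cdot,0)>0\}$ from $x_k$ to a supremum point given by Theorem \ref{t:thin} at $(x_k^*,0)$ — since $u$ is then strictly positive on a ball of radius $\asymp d_k$ around $(x_k,0)$, $u(X_k)<0$ forces $y_k\gtrsim d_k$; the complementary case $u(x_k,0)=0$ with $(x_k,0)$ in the interior of $\{u(\cdot,0)=0\}$ is more involved and requires the sharper two-sided form of Proposition \ref{p:bhpa} applied on the flat thin region, together with the variational competitor $u\mapsto u^+$ in the almost minimality inequality. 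Granting $\delta_k\asymp y_k$, the limit point $\hat X_0$ lies on the unit sphere with $\hat y_0>0$, so $\hat u_k(\hat X_k)=u(X_k)/\delta_k^s<0$ and $\hat u_0\ge 0$ force $\hat u_0(\hat X_0)=0$; the strong maximum principle then yields $\hat u_0\equiv 0$ in $\{y>0\}$, hence on $\R^{n+1}$ by evenness and continuity, contradicting the nondegeneracy of Theorem \ref{t:thin} applied at $(x_k^*,0)\in F^+(u)$ which in the limit gives $\sup_{\mathcal{B}_R(0)}\hat u_0\ge cR^s>0$ for small $R$. The $\le 0$ statement for $0\in F^-(u)$ is obtained by applying the $\ge 0$ statement to $-u$, which is an almost minimizer of $J$ after interchanging $\lambda^+$ and $\lambda^-$.
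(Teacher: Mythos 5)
Your first blow-up and the Phragm\'en--Lindel\"of step are fine, and your reduction to the case $y_k/r_k\to 0$ is correct (the paper reaches the same point). The genuine gap is the claimed comparability $\delta_k\asymp y_k$ in your second blow-up, which you admit is ``more involved'' when $(x_k,0)$ lies in the interior of the coincidence set $\{u(\cdot,0)=0\}$ but do not actually establish. There is no reason this comparability should hold for an almost minimizer: if $u$ vanishes on a thin ball of radius much larger than $y_k$ around $x_k$ (a ``pool''), then $\delta_k=d(X_k,F^+(u))$ can be an order of magnitude bigger than $y_k$; the barrier/Harnack estimate only gives the one-sided bound $u^-\leq Cy^{2s}$, which does not force $y_k\gtrsim\delta_k$. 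Your sketch invoking ``the sharper two-sided form of Proposition \ref{p:bhpa}'' and the competitor $u\mapsto u^+$ is not an argument, and without $\hat y_0>0$ the second blow-up again lands on the thin space and yields no contradiction, so the proof stalls precisely at the tangential case it was introduced to handle.

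The paper's proof handles the tangential case quite differently, and this is the idea you are missing. Before any blow-up, compare $-u^-$ with the $a$-harmonic function $v$ solving $\mathcal{L}_av=0$ in $B_\rho^+$, $v=-u^-$ on $\partial B_\rho\cap\{y>0\}$, $v=0$ on $\mathcal{B}_\rho$; Proposition \ref{p:bhpa} then gives the \emph{quantitative} estimate $u/y^{2s}\geq -C$ in $B_{\rho/2}^+$. After rescaling at the free boundary point $\tilde x_k$ with $r_k=d(X_k,F^+(u))$, this makes $u_k^-\leq Cr_k^s\to 0$, so $u_0\geq 0$. Since $u_0$ is a genuine minimizer, $u_0>0$ in $\{y>0\}$ (Theorem I of \cite{A}), forcing $y_0=0$; but then $u_0\equiv 0$ on a thin ball $\mathcal{B}_{3/4}(x_0)$ inside the coincidence set, so the Hopf lemma \eqref{e:hopf} gives $\lim_{y\to 0^+}y^a\partial_yu_0(x_0,y)>0$. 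Meanwhile, $u_k(x_k,y_k)<0$ and $u_k(x_k,0)=0$ yield $y^a\partial_yu_k(x_k,z_k)<0$ for some intermediate $z_k$, and since $w_k:=y^a\partial_yu_k$ (even-reflected) is $-a$-harmonic, the uniform convergence of $w_k$ gives $\lim_{y\to 0}y^a\partial_yu_0(x_0,0)\leq 0$, a contradiction. In short: the tangential case is not avoided but handled head-on via the Hopf lemma for the conjugate function $y^a\partial_yu$, and your plan lacks a substitute for this step.
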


\begin{proof}
 We show the case in which $0 \in F^+(u)$. From Theorem \ref{t:sep1} there exists $\rho>0$ such that $u(x,0) \geq 0$ for any $|x| \leq \rho$. Let $v$ satisfy 
 \[
 \begin{cases}
 &\mathcal{L}_a v =0 \quad \text{ in } B_{\rho} \cap \{y>0\} \\
 &v=-u^- \quad \text{ on } \partial B_{\rho} \cap \{y>0\} \\
 &v=0 \quad \text{ on } \mathcal{B}_{\rho}.  
 \end{cases}
 \]
 From the comparison principle for $a$-harmonic functions we have that $v\leq u$ in $B_{\rho} \cap \{y>0\}$. By Proposition \ref{p:bhpa}, we conclude that $-u^- \leq v \leq Cy^{2s}$ on $B_{\rho/2} \cap \{y>0\}$. Then 
 \begin{equation} \label{e:bhpa}
  \frac{u}{y^{2s}} \geq -C \quad \text{ in } B_{\rho/2} \cap \{y>0\}. 
 \end{equation}
 
 Assume by contradiction that there exists $(x_k,y_k) \to (0,0)$ with $u(x_k,y_k)<0$. We let $r_k = d((x_k,y_k),F^+(u))$ with $(\tilde{x}_k,0)\in F^+(u)$ and $r_k=|(x_k,y_k)-(\tilde{x}_k,0)|$. We now rescale by 
 \[
  u_k(x,y)= \frac{u(r_kx +\tilde{x}_k, r_k y)}{r_k^s}. 
 \]
 By optimal regularity, nondegeneracy, Lemma \ref{L:limits}, and Theorem \ref{t:sep1}, we have that for a subsequence $u_k$ converges uniformly on compact sets of $\mathbb{R}^{n+1}$ to a local minimizer $u_0$ with $0\in F^+(u_0)$. From \eqref{e:bhpa} we have that $u_0 \geq 0$. (Note that the negative part of $u$ is bounded by below by a factor of $r_k^{2s}$ where as the blow-up occurs with a scaling of $r_k^s$.)
 By uniform convergence, we conclude that $(x_k,y_k) \to (x_0,y_0)$ with $u(x_0,y_0)=0$ and $d((x_0,y_0),F^+(u_0))=1$. 
 By Theorem I in \cite{A}, $u_0(x,y)>0$ whenever $y>0$, so that necessarily $y_0=0$. From Theorem \ref{t:sep1}, for $k$ large enough 
 \[
  u_k(x,0)\geq 0 \quad \text{ for any } x \in \mathcal{B}_{2}. 
 \]
 Furthermore, since $d((x_k,y_k),F^+(u))=1$ and since $y_k \to 0$, it follows that for $k$ large enough 
 \[
  u_k(x,0)= 0 \quad \text{ for any } x \in \mathcal{B}_{3/4}(x_0), 
 \]
 and the same result will be true for $u_0$. In addition, $u_0 \geq 0$, so by Proposition \ref{p:bhpa} we have the following 
 Hopf-type lemma for $a$-harmonic functions: 
 \begin{equation} \label{e:hopf}
  \lim_{y \to 0^+} y^a \partial_y u_0(x,y)>0 \quad \text{ for any } x \in \mathcal{B}_{3/4}(x_0).
 \end{equation}

 With odd reflection $u_k(x,y)=-u_k(x,-y)$, we have that $u_k$ is $a$-harmonic on the full ball $B_{3/4}(x_0,0)$.  If $w_k(x,y):=y^a \partial_y u_k(x,y)$, and with even reflection $w_k(x,y)=w_k(x,-y)$, then $w_k$ is $-a$-harmonic on the full ball $B_{3/4}(x_0,0)$. 
 By uniform convergence results for $-a$-harmonic functions we have that 
 $w_k(x,y)$ converges uniformly to $y^a \partial_y u_0$ on $\overline{B_{1/2}(x_,0) \cap \{y>0\}}$. Since $u_k(x_k,y_k)<0$ and $u_k(x_k,0)=0$, it follows from the mean value theorem that 
 $y^a \partial_y u_k(x_k,z_k)<0$ for some $0<z_k<y_k$. Then from the uniform convergence for $-a$-harmonic functions we conclude that 
 \[
  \lim_{y \to 0} y^a \partial_y u_0(x_0,0) \leq 0. 
 \] 
 However, this contradicts \eqref{e:hopf}. 
\end{proof}

\begin{proposition} \label{p:newnondegen}
 Let $u\geq 0$ be an almost minimizer on $B_2$. There exists a constant $c$ depending on $n,s,\kappa,\alpha$ and $\int_{B_2} |\nabla u|^2 |y|^a$ 
 such that if $(x,y) \in B_{3/2}$ and if $d((x,y),F(u))\geq d((x,y),\{u=0\})$, then 
 \[
 u(x,y)\geq c d((x,y),F(u))^s. 
 \]
\end{proposition}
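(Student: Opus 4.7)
My plan is to argue by a blow-up/compactness contradiction, combining the thin-space nondegeneracy of Theorem~\ref{t:thin} with a Harnack chain in the thick space. I interpret the hypothesis $d((x,y),F(u))\ge d((x,y),\{u=0\})$ in its sharp form (the closest vanishing point of $u$ to $(x,y)$ lies on $F(u)$): if $r:=d((x,y),F(u))$ then $B_r(x,y)\subset\{u>0\}$.

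Suppose the conclusion fails; then there are almost minimizers $u_k$ on $B_2$ and points $(x_k,y_k)\in B_{3/2}$ satisfying the hypothesis with $r_k:=d((x_k,y_k),F(u_k))$ and $u_k(x_k,y_k)/r_k^s\to 0$. I pick $p_k^*=(p_k,0)\in F(u_k)$ with $|p_k^*-(x_k,y_k)|=r_k$ and rescale
\[
\tilde u_k(z):=\frac{u_k(p_k^*+r_k z)}{r_k^s},\qquad z_k:=\frac{(x_k,y_k)-p_k^*}{r_k}.
\]
Then $\tilde u_k$ is an almost minimizer on an increasing family of balls, $0\in F(\tilde u_k)$, $\tilde u_k\ge 0$, $\tilde u_k>0$ on $B_1(z_k)$, $|z_k|=1$, and $\tilde u_k(z_k)\to 0$. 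Combining Theorem~\ref{T:s3} with Lemma~\ref{L:limits}, a subsequence converges uniformly on compact sets to a local minimizer $u_0\ge 0$ on $\R^{n+1}$ with $0\in F(u_0)$, $u_0\ge 0$ on $\overline{B_1(z_0)}$, and $u_0(z_0)=0$ for some $z_0$ with $|z_0|=1$. If $z_0$ lay in the open half-space $\{y\ne 0\}$, then $\mathcal{L}_a u_0=0$ near $z_0$ with $u_0\ge 0$ and $u_0(z_0)=0$, so the strong maximum principle would force $u_0\equiv 0$ on the whole half-space $\{y>0\}$ and, by even reflection, on $\R^{n+1}$, contradicting $0\in F(u_0)$; hence $z_0=(\bar x,0)$ lies on the thin space with $|\bar x|=1$. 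Since $u_0$ is positive somewhere near the origin (e.g.\ by Theorem~\ref{t:thin}), the strong maximum principle on each connected open half-space gives $u_0>0$ on $\{y\ne 0\}$.

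The final step is a Harnack chain. I apply Theorem~\ref{t:thin} to the minimizer $u_0$ at $0\in F(u_0)$ to produce $(\bar q,0)\in\mathcal{B}_{1/4}(0)$ with $u_0(\bar q,0)\ge c_0>0$, and then chain the $\mathcal{L}_a$-Harnack inequality along a detour in the thick space: up from $(\bar q,0)$ to $(\bar q,1/2)$, across to $(\bar x,1/2)$, and down toward $z_0$. Each of the $O(1)$ intermediate links uses a Harnack ball sitting in $\{y>0\}$, where $u_0>0$. The main obstacle is the terminal link, where the ball has to reach the thin-space endpoint $z_0$: since $u_0$ may vanish on an entire thin-direction neighborhood of $\bar x$ (the boundary-Harnack $y^{2s}$ regime of Proposition~\ref{p:bhpa}), interior Harnack for $\mathcal{L}_a$ is not available for $u_0$ there. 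I plan to circumvent this by running the chain at the level of $\tilde u_k$ rather than $u_0$: the hypothesis gives $\tilde u_k>0$ on the whole ball $B_1(z_k)$, so a terminal Harnack ball of radius $\sim 1/4$ centered at $(\bar x,1/8)$ stays inside the positive phase of $\tilde u_k$ for $k$ large, and the $\mathcal{L}_a$-Harnack inequality for almost minimizers applies there up to the factor $1+\kappa r_k^{\alpha}\to 1$. Combining with the uniform convergence $\tilde u_k(\bar q,0)\to u_0(\bar q,0)$ and the intermediate links, this yields $\tilde u_k(z_k)\ge c_1>0$ for all large $k$, contradicting $\tilde u_k(z_k)\to 0$.
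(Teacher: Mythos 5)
Your blow-up/compactness setup matches the paper's: rescale at a nearest free-boundary point, pass to a limiting minimizer $u_0$, observe $u_0(z_0)=0$ with $|z_0|=1$, and rule out $z_0\notin\{y=0\}$ by the strong minimum principle. The divergence is in the endgame, and it is there that a genuine gap appears.

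You correctly identify the obstacle: the terminal Harnack ball must touch the thin space at $z_0$, where $u_0$ may exhibit the $y^{2s}$ boundary-Harnack regime. But your fix --- running that terminal link at the level of $\tilde u_k$ and invoking ``the $\mathcal L_a$-Harnack inequality for almost minimizers ... up to the factor $1+\kappa r_k^\alpha\to 1$'' --- relies on a result that is not available. On a solid ball crossing $\{y=0\}$, an almost minimizer is \emph{not} $a$-harmonic (even in its positivity set; this is exactly the difficulty with almost minimizers), so interior Harnack for $\mathcal L_a$ does not apply to $\tilde u_k$, and no Harnack inequality ``for almost minimizers'' is established or cited in the paper. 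The closest tool is the $a$-harmonic--replacement comparison used in Case~3 of Theorem~\ref{t:p1p2}, but transplanting it here would require a separate quantitative argument that you have not supplied. The same issue actually afflicts the first link of your chain, where the Harnack ball around $(\bar q,0)$ also crosses $\{y=0\}$, although that one is easily repaired since $u_0(\bar q,0)>0$ forces $u_0>0$ on a small solid ball by continuity.

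The paper closes the argument without any Harnack chain, and the key observation is worth internalizing: since $\tilde u_k>0$ on a fixed solid ball $B$ centered at $(x_0,0)$ for all large $k$, the positivity term $\int_{\mathcal B}\lambda^+\chi_{\{\tilde u_k>0\}}$ is constant and maximal there, so the almost-minimality of the full energy $J$ collapses to almost-minimality of the weighted Dirichlet energy alone on $B$. Passing to the limit (as in Lemma~\ref{L:limits}), $u_0$ then genuinely minimizes the Dirichlet energy on $B$, hence $\mathcal L_a u_0=0$ on the \emph{whole} solid ball $B$ including its thin part. Now $u_0\ge 0$, $u_0\not\equiv 0$ (by nondegeneracy), and $u_0(x_0,0)=0$ together violate the strong minimum principle for $\mathcal L_a$. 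No chaining is needed once the $a$-harmonicity of $u_0$ across the thin space is in hand; equivalently, if you insist on a Harnack chain, run it on $u_0$ (which \emph{is} $a$-harmonic on the terminal ball by the above argument), not on $\tilde u_k$.
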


\begin{proof}
 Let $r=d((x,y),F(u))$. We need only prove the result for $r$ small. Suppose by contradiction the result is not true. Then there exists $r_k\to 0$, almost minimizers $u_k$, and points $(x_k,y_k)$ satisfying the assumptions such that 
 \[
 \frac{u(x_k,y_k)}{r_k^s} \to 0. 
 \]
 Let $(\tilde{x}_k,0) \in F(u_k)$ with $r_k=d((x_k,y_k),F(u_k))$.
 By translation, we assume $(\tilde{x}_k,0)=(0,0)$.
 Define 
 \[
  \tilde{u}_k(x,y):=\frac{u(r_k x,r_k y)}{r_k^s}.
 \]
  Then there exists a subsequence such that $\tilde{u}_k \rightharpoonup u_0$ in $H^1(a,B_2)$ and uniformly in $B_2$. Furthermore, $u_0$ is a minimizer in $B_2$ and by nondegeneracy is not identically zero. Finally, $(x_k/r_k,y_k/r_k) \to (x_0,y_0) \in \partial B_1$ and $d((x_k/r_k,y_k r_k),F(\tilde{u}_k))=1$, and $d((x_k/r_k, y_k/r_k),\{\tilde{u}_k=0\})\geq 1/4$. From the uniform limit, we have that $u(x_0,y_0)=0$. Now $u>0$ whenever $|y|>0$, so by the strong minimum principle for $a$-harmonic functions, we conclude $y_0=0$. Now from Lemma \ref{l:weakthin}, we have for large enough $k$ that 
  \[
   \sup_{\mathcal{B}_{1/2}(x_0)} \tilde{u}_k \geq c. 
  \]
  Furthermore, for large enough $k$ as $y_k \to 0$, we have 
  $u(x,0)>0$ for $x \in B_{3/4}(x_0)$, so that 
  \[
   \int_{B_{3/4}(x_0,0)} |\nabla \tilde{u}_k|^2 |y|^a 
   \leq (1+ \kappa r_k^{\alpha}) \int_{B_{3/4}(x_0,0)} |\nabla (\tilde{u}_k + \psi)|^2 + r_k^{\alpha} |\mathcal{B}_{3/4}|.
  \]
  Then in the limit, we conclude $u_0$ is $a$-harmonic, nonnegative, and not identically zero, so that by the strong minimum principle for $a$-harmonic functions $u>0$ in $B_{3/4}(x_0,0)$, but this contradicts $u(x_0,0)=0$.  
\end{proof}

\section{Comparison subsolutions and supersolutions}\label{S:comparison}
Our remaining goal for the paper is to prove a flatness implies $C^{1,\gamma}$ result for the free boundary, which will be done in Section \ref{S:freebdryreg}. This will be done following the ideas from both \cite{SS3} and \cite{SStwo} (where the regularity of the free boundary is studied for the case $s=1/2$).

 Having proven that $F^+(u) \cap F^-(u)=\emptyset$ and that $u \geq  (\leq) 0$ in a neighborhood of any point of $F^+(u)$ (or $F^-(u))$), the study of the free boundary is reduced to the study of the one-phase problem. Consequently, for the rest of the paper we will assume that $u \geq 0$, so that we may assume $\lambda^-=0$. Furthermore, by a multiplying factor we may assume that $\lambda^+$ is chosen appropriately so that $U$ (defined below) and not a constant multiple of $U$ is a solution. 
 
 We first recall from Proposition \ref{p:rescale} that the standard rescaling 
\[
u_r(x):=\frac{u(rx)}{r^s}
\]
of an almost minimizer $u$
satisfies $J(u_r,B_1) \leq (1+\kappa r^{\alpha}) J(v_r,B_1)$. This can be rewritten as $J(u_r,B_1) \leq (1+\sigma) J(v_r,B_1)$. Assuming that $\|u\|_{H^{1}(a,B_1)} \leq C$, and choosing our test function $v_r$ to be the minimizer of $J$ on $B_1$ with $v_r=u_r$ on $\partial B_1$, we conclude that 
\[
 J(u_r, B_1) \leq J(v_r,B_1) + C \sigma. 
\]
As in \cite{SStwo}, we will utilize the above inequality to consider the regularity of the free boundary. Consequently, for the rest of the paper we assume that $u$ satisfies the following three properties in $B_1$. 
\begin{itemize}
\item $(H1)$ \ $u$ is even in $y$ and $\|u\|_{C^{0,s}(B_1)}, \|u \|_{H^1(a,B_1)} \leq C$. 
\item $(H2)$ \ $\displaystyle J(u,B_1) \leq J(v,B_1) + \sigma$ for some small $\sigma$. 
\item $(H3)$ \ $\displaystyle u(x,y) \geq c \frac{d((x,y),\{u=0\})^{2s}}{d((x,y),F(u))^s}$. 
\end{itemize}
In \cite{SStwo} four conditions are assumed; however, since we make the assumption that almost minimizers are $a$-harmonic off the thin space, only three conditions are needed. Recall the discussion immediately after Definition \eqref{almostminforus} for why this is a natural assumption.

 We now show why condition $(H3)$ is satisfied for minimizers; this is a consequence of nondegeneracy and the boundary Harnack principle. 

 \begin{proposition} \label{p:h3}
  Let $u$ be an almost minimizer on $B_2$. Then $u$ satisfies condition $(H3)$ on $B_1$. 
 \end{proposition}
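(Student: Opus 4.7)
The plan is to derive $(H3)$ directly from Proposition \ref{p:newnondegen}, which already provides an a priori stronger nondegeneracy estimate once the proposition's hypothesis $d((x,y), F(u)) \geq d((x,y), \{u = 0\})$ has been verified.

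First, I would verify this hypothesis by establishing the trivial containment $F(u) \subset \{u = 0\}$. Since $u \geq 0$ and $u$ is $a$-harmonic off the thin space, the strong minimum principle applied on $\{y > 0\} \cap B_2$ forces either $u \equiv 0$ there (in which case $(H3)$ is vacuous after even reflection) or $u > 0$ throughout $\{y \neq 0\} \cap B_2$. In the nontrivial case, $\{u = 0\} \cap B_2 \subset \{y = 0\}$, and continuity of $u$ (which we have by the $C^{0,s}$ regularity of Theorem \ref{T:s3}) gives
\[
F(u) = \partial\{u(\cdot,0) > 0\} \subset \{u(\cdot, 0) = 0\} \subset \{u = 0\}.
\]
Consequently, for every $(x,y)$,
\[
d_2 := d((x,y), F(u)) \;\geq\; d((x,y), \{u = 0\}) =: d_1.
\]

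Second, having verified the hypothesis, I would apply Proposition \ref{p:newnondegen} to any $(x,y) \in B_1 \subset B_{3/2}$ to obtain $u(x,y) \geq c \, d_2^s$. Since $d_1 \leq d_2$, we have $d_1^{2s}/d_2^s \leq d_1^s \leq d_2^s$, and therefore
\[
u(x,y) \;\geq\; c \, d_2^s \;\geq\; c \, \frac{d_1^{2s}}{d_2^s},
\]
which is exactly $(H3)$. The degenerate case $d_1 = 0$ (equivalently $u(x,y) = 0$) makes both sides vanish.

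There is no serious obstacle here: the statement is effectively a weakening of Proposition \ref{p:newnondegen}, obtained via the elementary inclusion $F(u) \subset \{u = 0\}$ together with an exponent comparison between $d_1$ and $d_2$.
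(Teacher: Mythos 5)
Your proof rests on reading Proposition \ref{p:newnondegen} literally, and that literal reading cannot be what the paper means. You correctly observe that $F(u)\subset\{u=0\}$, so the hypothesis $d((x,y),F(u))\geq d((x,y),\{u=0\})$ is \emph{always} satisfied. But then Proposition \ref{p:newnondegen}, as written, would assert $u(x,y)\geq c\,d((x,y),F(u))^s$ for \emph{every} $(x,y)\in B_{3/2}$ — and this is plainly false: take $(x_0,0)$ in the interior of the contact set $\{u(\cdot,0)=0\}$, so $d((x_0,0),F(u))=\delta>0$; the conclusion would force $u(x_0,0)\geq c\,\delta^s>0$, contradicting $u(x_0,0)=0$. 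The vacuity of the hypothesis is not a windfall; it is a signal that the inequality in the hypothesis is stated backwards. Indeed, both the proof of Proposition \ref{p:newnondegen} (which uses $d((x_k/r_k,y_k/r_k),\{\tilde u_k=0\})\geq 1/4$, i.e.\ a lower bound on the distance to the zero set in rescaled coordinates) and the paper's own proof of Proposition \ref{p:h3} (which invokes it only under $d((x,y),\{u=0\})\geq\overline d/4$) show that the intended hypothesis is of the form $d((x,y),\{u=0\})\gtrsim d((x,y),F(u))$.

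This means your argument has a genuine gap, not a cosmetic one. Under the correct hypothesis, Proposition \ref{p:newnondegen} only applies when $(x,y)$ sits well inside the positivity set (distance to $\{u=0\}$ comparable to distance to $F(u)$), and in that regime your exponent comparison $d_1^{2s}/d_2^s\leq d_2^s$ does give $(H3)$. But you have no estimate at all in the complementary regime $d((x,y),\{u=0\})\ll d((x,y),F(u))$, which is where the finer algebraic form of $(H3)$ — with the $d_1^{2s}/d_2^s$ quotient rather than just $d_2^s$ — is actually needed. In that regime, the point $(x,y)$ lies just above the interior of the contact set, and $u(x,y)$ is of size $\sim y^{2s}$; the paper gets the required lower bound by first applying nondegeneracy at the ``lifted'' point $(x,\overline d/2)$ and then transferring it down to $(x,y)$ via the boundary Harnack principle (Proposition \ref{p:bhpa}). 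That boundary Harnack step is the missing idea in your proposal.

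Concretely, to repair your argument you would: (i) correct the hypothesis of Proposition \ref{p:newnondegen} to $d((x,y),\{u=0\})\geq d((x,y),F(u))/4$ (or state it as a qualitative comparability); (ii) keep your easy case, now properly restricted to $d_1\geq\overline d/4$; and (iii) add the complementary case $d_1<\overline d/4$, in which you apply Proposition \ref{p:newnondegen} at $(x,\overline d/2)$ to get $u(x,\overline d/2)\geq c\,\overline d^{\,s}$ and then use Proposition \ref{p:bhpa} in $B_{\overline d/2}(x,0)\cap\{y>0\}$ (on which $u(\cdot,0)\equiv 0$) to conclude
\[
u(x,y)\;\geq\; c\,u(x,\overline d/2)\,\frac{y^{2s}}{\overline d^{\,2s}}\;\geq\; c\,\frac{d_1^{\,2s}}{\overline d^{\,s}},
\]
using that $d_1\sim|y|$ in this regime.
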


 \begin{proof}
     Let $(x,y) \in B_1$ and let $\overline{d}=d(x,F(u))$ with $(x_0,0) \in F(u)$
     and $d((x_0,0),(x,y))=\overline{d}$. If  $d((x,y),\{u=0\}) \geq \overline{d}/4$, then $(H3)$ is a consequence of the nondegeneracy result in Proposition \ref{p:newnondegen}.

      We now assume the case in which $d((x,y),\{u=0\}) \leq \overline{d}/2$. From Proposition \ref{p:newnondegen} we have $u(x,\overline{d}/2) \geq c_3 d^s$.   From Proposition \ref{p:bhpa} we have 
     \[
     u(x,y) \geq c u(x,\overline{d}/2) \frac{y^{2s}}{\overline{d}^{2s}} \geq c \frac{y^{2s}}{\overline{d}^s}
     \geq c \frac{d((x,y),\{u=0\})^{2s}}{d((x,y),F(u))^s}. 
     \]
 \end{proof}

 We start by recalling (see \cite{SSS}) the prototypical $2D$ minimizer for our free boundary problem (which up to multiplicative constant) is given by 
\[
 U(\rho,\theta)=\left(\rho^{1/2} \cos(\theta/2)\right)^{2s}. 
\]
It will also be convenient to reference 
\[
 H(\rho,\theta)=\rho^{1/2} \cos(\theta/2). 
\]

We recall the following properties from \cite{SSS}. If $\tau= \rho \cos \theta, \quad \eta=\rho \sin \theta, \quad \rho \geq 0$, $- \pi \leq \theta \leq \pi,$ then 
\begin{equation} \label{e:U}
 \frac{U_{\tau}}{U} = \frac{U_\rho}{U}=\frac{s}{\rho}. 
\end{equation}
From equation (3.1) in \cite{SStwo} we have 
\[
\frac{H_{\tau}(\tau_1,\eta)}{H_{\tau}(\tau_2,\eta)} \leq C \quad \text{ if } |\tau_1-\tau_2|\leq \frac{1}{2}|(\tau_2,\eta)|,
\]
from which it follows (with a new constant $2C$) that 
\begin{equation} \label{e:diadic}
 \frac{U_\tau(\tau_1,\eta)}{U_\tau(\tau_2,\eta)} \leq C \quad \text{ if } |\tau_1-\tau_2|\leq \frac{1}{2}|(\tau_2,\eta)|. 
\end{equation}
We now recall the family of functions $V_{S,\zeta}$ from \cite{SStwo} (see also \cite{SS} for a more detailed explanation). Let us motivate the construction of $V_{S,\zeta}$. The idea is to modify the prototypical $2D$ minimizer to construct a subsolution or supersolution to our free boundary problem. In the simple case of the original Alt-Caffarelli problem, the prototypical $1D$ minimizer would be $x_1^+$. Now taking a hyper-surface $S$, if $\tau$ is the distance to $S$, then 
\[
\Delta_x \tau = - \kappa(x)
\]
where $\kappa$ is the mean curvature of the level set of the distance function $\tau$ to $S$. Then by choosing $S$ appropriately we can have either $\Delta \tau \geq (\leq ) 0$. Furthermore, we have that $|\nabla \tau|=1$ on $S$, so the approach to the boundary remains the same. Thus, $\tau$ would be either a subsolution or supersolution to the original Alt-Caffarelli problem. For the thin one-phase problem, the construction is more difficult since the prototypical minimizer is $2D$. 

For any $\zeta \in \mathbb{R}$ we initially define the following family of functions 
\begin{equation} \label{e:v}
 v_{\zeta}(\tau,\eta):= \left(1+ \frac{\zeta}{4} \rho \right)U(\tau,\eta). 
\end{equation}

For an $(n-1)$-dimensional $C^2$ surface $S\subset \mathbb{R}^{n}$ and a point $X=(x,\eta)$ in a small neighborhood of $S$, we call $P_{S,X}$ the $2D$ plane passing through $X$ and parallel to both the normal to $S$ and the vector $(0,\eta)$. We then define 
\begin{equation} \label{e:familyv}
 V_{S,\zeta}(X):=v_\zeta(\tau,\eta).
\end{equation}
 If $(x_1, \ldots, x_{n-1})=x'$ and 
 \[
 S:= \left\{x_{n}=\xi' \cdot x' + \frac{1}{2} (x')^T M x'\right\}, 
 \]
for some $\xi'$ and $M$, then we define $V_{M,\xi',\zeta}(X):=V_{S,\zeta}(X)$. Finally, for small $\mu>0$ we denote the class
\[
\mathcal{V}_{\mu}:=\{V_{M,\xi',\zeta} : \ \| M\|, |\zeta|, |\xi'| \leq \mu\}. 
\]

The next lemmas construct a viscosity subsolution and supersolution to our free boundary problem. We recall the notion $\partial V / \partial \tau^s=1$ as given in \cite{SSS} and \cite{SStwo} as 
\[
V(x,y)=U((x-x_0) \cdot \nu(x_0),z) +o(|(x-x_0,z)|^{s}), \quad \text{ as } (x,y) \to (x_0,0),
\]
where $\nu(x_0)$ denotes the unit normal at $x_0$ to $\partial \{V(\cdot,0)>0\}$. 
\begin{lemma} \label{l:viscsub}
 Let $V=V_{M,\xi',\zeta} \in \mathcal{V}_{\mu}$ with $\mu \leq \mu_0$. There exists $C_0>0$ such that if
 \[
 \frac{\zeta}{1-a} - \text{\emph{tr}} \ M \geq C_0 \mu^2, 
 \]
 then $V$ is a comparison subsolution to the thin one-phase problem in $B_2$:
 \[
 \begin{aligned}
  &(i) \quad \mathcal{L}_a V\geq \mu^2 |y| \text{ in } B_2^+(V), \\
  &(ii) \quad \frac{\partial V}{\partial \tau^{s}}=1 \text{ on } F(V). 
 \end{aligned}
 \]
 Similarly, if 
  \[
 \frac{\zeta}{1-a} - \text{\emph{tr}} \ M \leq -C_0 \mu^2, 
 \]
 then $V$ is a comparison supersolution to the thin one-phase problem in $B_2$: 
  \[
 \begin{aligned}
  &(i) \quad \mathcal{L}_a V \leq -\mu^2 |y| \text{ in } B_2^+(V), \\
  &(ii) \quad \frac{\partial V}{\partial \tau^{s}}=1 \text{ on } F(V). 
 \end{aligned}
 \]
\end{lemma}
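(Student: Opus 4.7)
The plan is to reduce the verification to a $2D$ computation in the plane $P_{S,X}$ and then bound the error introduced by the curvature of $S$. We write $V(x,y) = v_\zeta(\tau(x), y)$, where $\tau(x)$ is the signed distance from $x \in \mathbb{R}^n$ to $S$ on the positive side. Since $\tau$ depends only on $x$ and satisfies the eikonal equation $|\nabla_x \tau|^2 \equiv 1$, the chain rule gives
\[
\mathcal{L}_a V(x,y) = \mathcal{L}_a^{(2)} v_\zeta(\tau,y) + v_{\zeta,\tau}(\tau,y)\,\Delta_x \tau(x),
\]
where $\mathcal{L}_a^{(2)} = \partial_{\tau\tau} + \partial_{yy} + \frac{a}{y}\partial_y$ is the $2D$ version of $\mathcal{L}_a$ acting in the plane $P_{S,X}$.

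The first step is purely $2D$: since $\mathcal{L}_a^{(2)} U = 0$, the product rule for $\mathcal{L}_a^{(2)}$ combined with the direct calculation $\mathcal{L}_a^{(2)}\rho = (1+a)/\rho$ and the identity $U_\rho = sU/\rho$ from \eqref{e:U} yields
\[
\mathcal{L}_a^{(2)}(\rho U) = U\,\mathcal{L}_a^{(2)}\rho + 2\nabla\rho\cdot\nabla U = \frac{1+a}{\rho}U + \frac{2sU}{\rho} = \frac{2U}{\rho},
\]
using $a = 1-2s$. Consequently,
\[
\mathcal{L}_a^{(2)} v_\zeta = \frac{\zeta}{4}\,\mathcal{L}_a^{(2)}(\rho U) = \frac{\zeta U}{2\rho} = \frac{sU}{\rho}\cdot\frac{\zeta}{1-a}.
\]
For the second step, differentiating $v_\zeta$ and using \eqref{e:U} gives $v_{\zeta,\tau} = sU/\rho + O(\mu U)$; and since $S$ is the graph $x_n = \xi'\cdot x' + \tfrac{1}{2}(x')^T M x'$ with $\|M\|, |\xi'|\le \mu$, the standard expansion of the Laplacian of the distance function to a $C^2$ hypersurface of small curvature gives $\Delta_x \tau(x) = -\operatorname{tr} M + O(\mu^2)$ uniformly on $B_2$ provided $\mu \le \mu_0$.

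Combining these contributions yields
\[
\mathcal{L}_a V = \frac{sU}{\rho}\Bigl(\frac{\zeta}{1-a} - \operatorname{tr} M\Bigr) + R,
\]
where the remainder satisfies $|R| \le C\mu^2 U + C\mu^3 U/\rho$ on $B_2$. Under the hypothesis $\frac{\zeta}{1-a} - \operatorname{tr} M \ge C_0\mu^2$, taking $C_0$ large (depending on $n,a$) makes the main term both absorb $R$ and dominate $\mu^2|y|$, using that a direct comparison based on $U = \rho^s\cos^{2s}(\theta/2)$ and $|y| = \rho|\sin\theta|$ gives $U/\rho \ge c|y|$ on the relevant portion of $B_2^+(V)$. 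The boundary condition $\partial V/\partial \tau^s = 1$ on $F(V)$ is immediate from the ansatz: on $F(V)$ we have $\tau = 0$, and since $v_\zeta = (1 + O(\mu\rho))U$, expanding about a free boundary point $x_0$ along the inward normal $\nu(x_0)$ gives
\[
V(x,y) = U\bigl((x-x_0)\cdot\nu(x_0),\,y\bigr) + o\bigl(|(x-x_0,y)|^s\bigr),
\]
which is exactly $\partial V/\partial\tau^s = 1$. The supersolution case is identical with all signs reversed.

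The main obstacle is uniform control of the error terms so that the $C_0\mu^2$ cushion in the hypothesis actually dominates them throughout $B_2^+(V)$: in particular, the comparison $U/\rho \gtrsim |y|$ needs care near the slit $\{\theta = \pm\pi\}$ where $U$ degenerates, and the expansion $\Delta_x \tau = -\operatorname{tr} M + O(\mu^2)$ must be controlled uniformly at distances of order one from $S$, not only in a tiny neighborhood. Both are handled by the dyadic estimate \eqref{e:diadic} together with the explicit form of the graph $S$.
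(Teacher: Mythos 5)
Your decomposition $\mathcal{L}_a V = \mathcal{L}_a^{(2)}v_\zeta + v_{\zeta,\tau}\,\Delta_x\tau$ via the eikonal identity $|\nabla_x\tau|=1$, the $2D$ computation $\mathcal{L}_a^{(2)}v_\zeta = \frac{\zeta}{1-a}U_\tau$, and the estimates $|v_{\zeta,\tau}-U_\tau| \lesssim \mu U$ and $|\Delta_x\tau + \operatorname{tr} M| \lesssim \mu^2$ are exactly the paper's steps (the paper writes $\Delta_x\tau = -\kappa$, cites \cite{SS3} for the curvature bounds, and otherwise performs the same computation).

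There is, however, a genuine gap in your final step. The inequality $U/\rho \gtrsim |y|$, which you invoke to conclude $\mathcal{L}_a V \geq \mu^2|y|$, is false whenever $s > 1/2$. Since $U/\rho = \rho^{s-1}\cos^{2s}(\theta/2)$ and $|y| = 2\rho\,|\sin(\theta/2)|\cos(\theta/2)$, one has $\frac{U/\rho}{|y|} = \frac{\rho^{s-2}\cos^{2s-1}(\theta/2)}{2|\sin(\theta/2)|}$, which tends to $0$ as $\theta \to \pm\pi$ once $2s-1>0$; no constant $c>0$ makes $U/\rho \geq c|y|$ hold on all of $B_2^+(V)$, and the dyadic estimate \eqref{e:diadic} does not repair this. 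What does hold uniformly is $U/\rho \geq c\,|y|^{1-a}$, since $\frac{U/\rho}{|y|^{1-a}} = \frac{\rho^{-s-1}}{2^{2s}|\sin(\theta/2)|^{2s}} \geq 2^{-3s-1}$ on $B_2$. Accordingly the correct conclusion in $(i)$ is $\mathcal{L}_a V \geq \mu^2|y|^{1-a}$; this is exactly what the paper's own proof derives ($\mathcal{L}_a V \geq C\mu^2 U_\tau \geq C_0\mu^2|y|^{1-a}$), and it is the inequality actually needed in Proposition \ref{p:viscmin}, where the Euler--Lagrange relation coming from $\tilde{J}(w)=J(w)+\int 2\mu^2 w|y|$ reads $|y|^a\mathcal{L}_a\overline{V} = \mu^2|y|$, i.e.\ $\mathcal{L}_a\overline{V} = \mu^2|y|^{1-a}$. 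The bare $|y|$ in the lemma's statement is evidently a typo for $|y|^{1-a}$, and your proof should be corrected to match. A separate small slip: the contribution of $U_\tau(\kappa-\operatorname{tr} M)$ to the remainder is $O(\mu^2 U/\rho)$, not $O(\mu^3 U/\rho)$; since $U \leq 2U/\rho$ on $B_2$ both versions still give $|R|\leq C\mu^2 U_\tau$, so the rest of the argument is unaffected.
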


\begin{proof}
  From the definition of $V$ the free boundary condition $(ii)$ is satisfied. We therefore only need to check condition $(i)$. Since the variable $\tau$ is the distance (in the $x$ variable) to the surface $S$, we have (under a rotation in $x$) that $\partial_{x_{n}} \tau =1$, so that $\partial_{x_{n}}^2 \tau=0$. Then 
 \[
 \Delta_{x} \tau = - \kappa(x),
 \]
 where $\kappa(x)$ is the mean curvature of the level set of the distance function $\tau$ (in the $x$ variable) to $S$. As in the proof of Proposition 3.2 in \cite{SS3}, the principle curvatures are given by 
 \[
 \kappa_i(x) = \frac{\kappa_i(x^*)}{1-\tau \kappa_i(x^*)}, 
 \]
 where $x^*$ is the closest point on $S$ to $x$. As shown in \cite{SS3}, we have 
 \begin{equation} \label{e:curv}
 \begin{aligned}
   |\kappa(x)| &\leq C \mu, \\
   |\kappa(x)-\text{tr } M| &\leq C \mu^2. 
 \end{aligned}
 \end{equation}
 We now compute 
 \[
  \mathcal{L}_a V(x,y) = \left(\frac{\partial^2 }{\partial \tau^2} + \frac{a}{\eta} \frac{\partial}{\partial \eta} + \frac{\partial^2 }{\partial \eta^2} \right)v_{\zeta} - (\partial_\tau v_{\zeta}) \kappa(x). 
 \] 
 Using polar coordinates we have 
 \[
 \begin{aligned}
  \mathcal{L}_a v_{\zeta} &= (1+(\zeta/4) \rho)\mathcal{L}_a U + (\zeta/4)(\mathcal{L}_a \rho) U +  2\langle \nabla (1+\zeta/4 \rho), \nabla U \rangle \\
  &= 0 + (\zeta/4) \frac{1+a}{\rho} U + (\zeta/2) \frac{\partial}{\partial \rho} U \\
  &= (\zeta/4)\frac{1+a}{s} U_{\rho} +(\zeta/2) U_{\rho} \\
  &= \frac{\zeta}{1-a} U_{\tau}.
  \end{aligned}
 \]
Moreover,
 \[
 \begin{aligned}
  \partial_\tau v_{\zeta} - U_\tau &= (\zeta/4) (\partial_\tau \rho) U + (\zeta/4)\rho U_\tau \\
  &=(\zeta/4) (\cos \theta) U + (\zeta/4) s U. 
 \end{aligned}
 \]
 Since $\rho \leq 2$, then from \eqref{e:U} it follows that $U \leq  U_{\tau}/s$, so that
 \begin{equation} \label{e:diffe}
 |\partial_t v_{\zeta} - U_{\tau}| \leq |\zeta/2| U \leq  \frac{\mu}{s} U_{\tau}. 
 \end{equation}
 Therefore 
  \[
   \begin{aligned}
     \mathcal{L}_a V(x,y) -(\zeta/(1-a) - \text{tr} M) U_{\tau} 
     &=  \frac{\zeta}{1-a} U_{\tau} - \partial_\tau v_{\zeta} \kappa(x) -(\zeta/(1-a) - \text{tr} M) U_{\tau}  \\
     &= - \partial_\tau v_\zeta \kappa(x) + \text{tr} M  \ U_{\tau},
   \end{aligned}
  \]
  and so from \eqref{e:curv} and \eqref{e:diffe} we conclude 
  \[
   \begin{aligned}
       |\mathcal{L}_a V(x,y) -(\zeta/(1-a) - \text{tr} M) U_{\tau}| &= 
       | (- \partial_\tau v_\zeta + U_{\tau}) \kappa(x) +  (\text{tr} M -\kappa(x)) \ U_{\tau}| \\
       &\leq \frac{\mu}{s} U_{\tau} |\kappa(x)| + C \mu^2 U_{\tau} \\
       &\leq C \mu^2 U_{\tau}. 
   \end{aligned}
  \]
  We may then conclude that if $\zeta/(1-a) - \text{tr}M \geq 2C \mu^2$, then 
  \[
    \mathcal{L}_a V(x,y) \geq C \mu^2 U_{\tau} \geq C_0 \mu^2 |y|^{1-a}. 
  \]
  Similarly, if $\zeta/(1-a) - \text{tr}M \leq -2C \delta^2$, then 
  \[
    \mathcal{L}_a V(x,y) \leq -C \mu^2 U_{\tau} \leq -C_0 \mu^2 |y|^{1-a}. 
  \]
\end{proof}

As in \cite{SS3} we utilize the domain deformation given by 
\[
 U(x,y)=V(x-\tilde{V}(x,y)e_n,y).
\]
We have the following result for $\tilde{V}$. 

\begin{proposition} \label{p:hodograph}
 Let $V=V_{M,\xi',\zeta} \in V_{\mu}$ with $\mu \leq \mu_0$. Then $V$ is strictly monotone increasing in the $e_n$-direction in $B_2^+(V)$. Moreover, $\tilde{V}$ satisfies the following estimate in $B_2$
 \[
  |\tilde{V}(x,y) - \gamma_V(x,y)| \leq C_1 \mu^2, \quad \gamma_V(x,y)= - \xi' \cdot x' - \frac{1}{2}(x')^T M x' + \frac{\zeta}{4s}(x_n^2+y^2). 
 \]
\end{proposition}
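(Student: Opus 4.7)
The plan has three stages: (1) establish strict $e_n$-monotonicity of $V$ so that $\tilde V$ is well-defined by the implicit relation $V(x - \tilde V e_n, y) = U(x,y)$; (2) Taylor expand the signed distance from $x - \tilde V e_n$ to $S$; (3) Taylor expand the radial correction in $v_\zeta$. Adding the two expansions will produce $\tilde V = \gamma_V + O(\mu^2)$.

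For monotonicity, since $\eta = y$ does not depend on $x$, a direct computation gives $\partial_{x_n} V = (\partial_\tau v_\zeta)(\partial_{x_n} \tau)$. By \eqref{e:diffe}, $\partial_\tau v_\zeta = U_\tau(1 + O(\mu))$. Writing $S$ as the graph $\{x_n = f(x')\}$ with $f(x') := \xi'\cdot x' + \tfrac12 (x')^T M x'$, we have $|\nabla f| = O(\mu)$, so the $e_n$-component of the unit normal to $S$ equals $1 + O(\mu^2)$ and hence $\partial_{x_n}\tau = 1 + O(\mu^2)$. Since $U_\tau > 0$ where $\tau > 0$, we get $\partial_{x_n} V > 0$ on $B_2^+(V)$ for $\mu \leq \mu_0$ small. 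Combined with the fact that moving $x - t e_n$ across $S$ passes monotonically from $\{V>0\}$ into $\{V = 0\}$, this shows that for each $(x,y)$ the map $t \mapsto V(x - te_n, y)$ attains each nonnegative value exactly once, uniquely defining $\tilde V(x,y)$.

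For the estimate, set $x^{\ast} := x - \tilde V e_n$, $\tau^{\ast} := \tau(x^{\ast})$, $\rho^{\ast} := \sqrt{(\tau^{\ast})^2 + y^2}$. Since $|\nabla f| = O(\mu)$, the signed distance to $S$ satisfies
\begin{equation*}
\tau^{\ast} = (x_n - \tilde V - f(x'))\bigl(1 + O(\mu^2)\bigr).
\end{equation*}
The defining relation rewrites as $(1 + \tfrac{\zeta}{4}\rho^{\ast})U(\tau^{\ast}, y) = U(x_n, y)$. Letting $\delta := \tau^{\ast} - x_n$, a second-order Taylor expansion of $U(\cdot, y)$ around $x_n$, combined with $U_\tau/U = s/\rho$ from \eqref{e:U} and $\rho^{\ast} = \rho + O(\mu)$, yields
\begin{equation*}
\delta = -\frac{\zeta \rho^2}{4s} + O(\mu^2) = -\frac{\zeta(x_n^2 + y^2)}{4s} + O(\mu^2).
\end{equation*}
Substituting back into the expansion of $\tau^{\ast}$ gives $\tilde V = -f(x') + \tfrac{\zeta}{4s}(x_n^2 + y^2) + O(\mu^2) = \gamma_V + O(\mu^2)$.

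The main technical obstacle will be the degeneracy of $U$ near the origin $(0,0)$, where $U_\tau \to 0$ and a naive bound on the quadratic Taylor remainder in $\delta$ breaks down. I plan to handle this by working with the polar form of $v_\zeta$ and $U$ (expanding in the dimensionless small parameter $\delta/\rho$ rather than in $\delta$), exploiting the $2s$-homogeneity $U(\lambda\tau, \lambda\eta) = \lambda^{2s} U(\tau,\eta)$ to rescale annuli $\{\rho \sim 2^{-k}\}$ back to $\{\rho \sim 1\}$, and arguing that $\tilde V - \gamma_V$ is continuous at the origin where both vanish, in order to upgrade the pointwise interior estimate to a uniform $O(\mu^2)$ bound on all of $B_2$.
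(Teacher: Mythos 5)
Your proposal follows essentially the same route as the paper: Taylor-expand $U$ in the signed-distance variable using $U_\tau/U = s/\rho$ (equation \eqref{e:U}), relate $\tau$ to $x_n$ via the gradient estimate $1 - C\mu^2 \leq \partial_{x_n}\tau \leq 1$ from \cite{SS3}, and combine with the radial factor $(1+\tfrac{\zeta}{4}\rho)$ to extract $\gamma_V$ up to $O(\mu^2)$, with strict monotonicity coming from the same computation of $V_{x_n}=(\partial_\tau v_\zeta)(\partial_{x_n}\tau)$. The degeneracy near $L$ that you flag as the main technical obstacle is already taken care of by the pointwise bound $|U_{\tau\tau}| \leq 2\rho^{-1}U_\tau$ together with the observation that $\delta/\rho = O(\mu)$ uniformly (so the quadratic Taylor remainder is $\lesssim (\delta/\rho)^2 U = O(\mu^2)U$), and the dyadic-annuli rescaling and continuity-at-the-origin argument you sketch are unnecessary.
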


\begin{proof}
 We first show that $v_{\zeta}$ satisfies 
 \begin{equation} \label{e:trap}
   U(\tau + \frac{\zeta}{4s} \rho^2 -C\mu^2,\eta) \leq v_{\zeta}(\tau, \eta) \leq 
   U(\tau + \frac{\zeta}{4s} \rho^2 +C\mu^2,\eta),
 \end{equation}
 with $\rho^2=\tau^2 + \eta^2$ and $\gamma_{\zeta}(\tau,\eta):=\frac{\zeta}{2}\rho^2$. Taking derivatives of \eqref{e:U} we obtain
 \[
 |U_{\tau \tau}|\leq \frac{2}{\rho} U_{\tau}. 
 \]
If $\delta \leq \rho/2$, then 
\[
|U(\tau + \delta, \eta)-\left(U(\tau, \delta) + \mu U_{\tau}(\tau, \delta) \right)|
\leq \delta^2 |U_{\tau \tau}(\tau', \eta)| \leq C \delta^2 \rho^{-1} U_{\tau}(\tau,\eta). 
\]
The last inequality utilizes \eqref{e:diadic}. Using again \eqref{e:U} we then have 
\[
 \left(1+ \frac{s}{\rho}\delta + C \frac{s}{\rho^2} \delta^2 \right)U(\tau, \eta) 
 \geq U(\tau + \delta,\eta) 
 \geq \left(1+ \frac{s}{\rho}\delta - C \frac{s}{\rho^2}\delta^2 \right)U(\tau, \eta).  
\]
We now choose $\delta$ to satisfy 
\[
 \frac{s}{\rho} \delta + C \frac{s}{\rho^2} \delta^2 = \frac{\zeta}{4}\rho,
\]
so that 
\[
 \left(1+ \frac{\zeta}{4} \right)U(\tau, \eta) \geq 
 U\left(\tau + \frac{1}{s}\frac{\zeta}{4} \rho^2 - C \frac{\delta^2}{\rho}\right). 
\]
From the quadratic equation, we have that $\delta/\rho = O(\zeta)$. Since $|\zeta| \leq \mu$ we have that 
\[
 \left(1+ \frac{\zeta}{4} \right)U(\tau, \eta) \geq 
 U\left(\tau + \frac{1}{s}\frac{\zeta}{4} \rho^2 - C \mu^2,\eta \right). 
\]
Applying the same computations to 
\[
 \frac{s}{\rho} \delta - C \frac{s}{\rho^2} \delta^2 = \frac{\zeta}{4}\rho,
\]
 we obtain \eqref{e:trap}. 

To finish the proof of the proposition, we need to relate $\tau$ to $x_n$. Since $\tau$ is the signed distance to the surface $S:= \{x_n = g(x')\}$, as shown in (3.14) in \cite{SS3}, we have 
that 
\begin{equation} \label{e:distbound}
 1 \geq \frac{\partial \tau}{\partial x_n} \geq 1 - C \mu^2 \quad \text{ in } B_1. 
\end{equation}
By integrating this inequality along the segment from $(x',g(x'))$ to $(x',x_n)$, we obtain 
\[
|\tau - (x_n - g(x'))| \leq C \mu^2. 
\]
Furthermore, in $B_1$ the surface $S$ and $\{x_n=0\}$ are within distance $\mu$, so that $|\tau - x_n|\leq C \mu$. Combining with  $|\zeta| \leq \mu$, we conclude 
\[
 \left| \frac{\zeta}{4s}(\tau^2 + \eta^2) - \frac{\zeta}{4s} (x_n^2 + \eta^2) \right| 
 \leq \frac{|\zeta|}{4s} |\tau + x_n||\tau - x_n| \leq C \mu^2. 
\]
Applying the above inequalities to \eqref{e:trap} we conclude 
\begin{equation} \label{e:trap1}
\begin{aligned}
    &U\left(x + \left(\frac{\zeta}{4s}(x_n^2 + y^2) -\frac{1}{2}(x')^T M x' - \xi'\cdot x' -C\mu^2\right)e_n,y\right) \\
    &\leq v_{\zeta}(x, y)  \\
    &\leq U\left(x + \left(\frac{\zeta}{4s}(x_n^2 + y^2) -\frac{1}{2}(x')^T M x' - \xi'\cdot x' +C\mu^2\right)e_n,y\right). \\
   \end{aligned}
\end{equation}
\end{proof}

\begin{lemma} \label{l:n-1direction}
 Let $V \in \mathcal{V}_{\mu}$, and let $S=\{x_n = g(x')\}$. If $\mu \leq \mu_0$, then there exist constants $c,C$ such that 
 \[
  c U_{\tau}((x_n - g(x'))/2,y) \leq V_{x_n}(x-g(x')e_n,y) \leq C U_{\tau}(x_n - g(x'),y). 
 \]
\end{lemma}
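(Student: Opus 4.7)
The plan is to differentiate $V(x,y)=v_\zeta(\tau(x),|y|)$ in the $x_n$-direction via the chain rule, control the two resulting factors using estimates already developed earlier in this section, and then translate the bound at the $\tau$-argument $\tau(X)$ into bounds at the prescribed $\tau$-arguments by exploiting the approximate scale invariance \eqref{e:diadic} of $U_\tau$.

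Setting $X:=x-g(x')e_n=(x',x_n-g(x'))$, the chain rule gives
\[
V_{x_n}(X,y)=(\partial_\tau v_\zeta)(\tau(X),|y|)\,\partial_{x_n}\tau(X).
\]
By \eqref{e:distbound}, $1-C\mu^2\le\partial_{x_n}\tau(X)\le 1$, while by \eqref{e:diffe} from the proof of Lemma \ref{l:viscsub}, $\partial_\tau v_\zeta$ is trapped between $(1-\mu/s)U_\tau$ and $(1+\mu/s)U_\tau$ evaluated at $(\tau(X),|y|)$. Hence, for $\mu_0$ small, $V_{x_n}(X,y)\asymp U_\tau(\tau(X),|y|)$ with constants depending only on $s$ and $\mu_0$.

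Next I locate $\tau(X)$. Since $(x',g(x'))\in S$, $\tau(x',g(x'))=0$, and integrating \eqref{e:distbound} along the vertical segment from $(x',g(x'))$ to $(x',x_n-g(x'))$ yields
\[
(1-C\mu^2)(x_n-2g(x'))\le\tau(X)\le x_n-2g(x').
\]
Since $|g(x')|\le C\mu$, after shrinking $\mu_0$ the three $\tau$-arguments $\tau(X)$, $x_n-g(x')$, and $(x_n-g(x'))/2$ lie within a factor $2$ of one another. Using the explicit formula $U_\tau(\tau,\eta)=s\rho^{s-1}(\cos(\theta/2))^{2s}$ derivable from \eqref{e:U}, and noting that the angular factor is bounded between $2^{-s}$ and $1$ on $\theta\in[0,\pi/2]$, the ratios among $U_\tau$ at these three points are controlled by the ratios of the corresponding $\rho$'s, which are bounded. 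Equivalently, a bounded number of applications of \eqref{e:diadic} gives
\[
U_\tau(\tau(X),|y|)\asymp U_\tau(x_n-g(x'),|y|)\asymp U_\tau((x_n-g(x'))/2,|y|),
\]
which combined with the previous step concludes the proof.

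The main obstacle is the mismatch $\tau(X)\approx x_n-2g(x')$ versus the prescribed arguments $x_n-g(x')$ and $(x_n-g(x'))/2$: the $O(\mu)$ discrepancy introduced by the shift could dominate the $\tau$-scale very close to the free boundary, where $U_\tau$ is singular for $s<1$. The factor $1/2$ built into the lower-bound argument provides precisely the slack needed to absorb this discrepancy uniformly for $\mu\le\mu_0$ small.
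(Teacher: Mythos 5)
Your plan — differentiate via the chain rule, control $\partial_{x_n}\tau$ by \eqref{e:distbound} and $\partial_\tau v_\zeta$ by \eqref{e:diffe}, then relate the various $\tau$-arguments by \eqref{e:diadic} — is the same as the paper's. The first two steps are fine.

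The gap is in the third step, and it is real. You locate $\tau$ at the shifted point $X=(x',x_n-g(x'))$ by integrating $\partial_{x_n}\tau$ from $(x',g(x'))$ to $(x',x_n-g(x'))$, getting $\tau(X)\approx x_n-2g(x')$; you then assert that $x_n-2g(x')$, $x_n-g(x')$, and $(x_n-g(x'))/2$ are within a factor $2$ of one another because $|g(x')|\le C\mu$. That assertion fails near the free boundary, which is precisely where these bounds have content: take for instance $g(x')=\mu/2$ and $x_n-g(x')=\mu/2+\varepsilon$ with $\varepsilon\ll\mu$, so that $x_n-2g(x')=\varepsilon$ while $x_n-g(x')$ stays of order $\mu$. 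At $y=0$ one has $U_\tau(\tau,0)\approx s\tau^{s-1}$, which is singular for $s<1$, so $U_\tau(x_n-2g(x'),0)/U_\tau(x_n-g(x'),0)\to\infty$ as $\varepsilon\to 0$, and the claimed upper bound breaks; for $g(x')<0$ the roles flip and the lower bound breaks. Your closing remark that the built-in factor $1/2$ "absorbs the discrepancy" is not correct, because the discrepancy $g(x')$ is absolute (of size up to $\mu$) while the scale $x_n-g(x')$ may be arbitrarily small, so the ratio is unbounded.

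The reason the paper's proof does not run into this is that it locates $\tau$ at the unshifted point: it integrates \eqref{e:distbound} from $(x',g(x'),y)$ to $(x',x_n,y)$, obtaining $\tfrac12(x_n-g(x'))\le\tau(x',x_n)\le x_n-g(x')$, and then the three arguments are genuinely within a factor $2$ so \eqref{e:diadic} (or your explicit $U_\tau=s\rho^{s-1}(\cos(\theta/2))^{2s}$ computation) closes the proof. In other words, the paper's argument is a proof that $V_{x_n}(x,y)\asymp U_\tau(x_n-g(x'),y)$, and this is also what is invoked later in Lemma \ref{l:subsol}, where the bound is phrased as $V_{x_n}\gtrsim U_\tau(d(\,\cdot\,,S),0)$ at the point itself. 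The ``$-g(x')e_n$'' in the displayed statement is a notational slip; if you take it literally — as you did — the resulting statement is actually false near $F(V)$, and no amount of small-$\mu$ tuning repairs it. If you integrate to $(x',x_n)$ rather than to $(x',x_n-g(x'))$, the $2g(x')$ spurious shift disappears and the rest of your argument is correct.
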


\begin{proof}
 Note that 
 \[
  V_{x_n} = \partial_{\tau} v_{\zeta}(\tau,\eta) \frac{\partial \tau}{\partial x_n}. 
 \]
 Recalling \eqref{e:distbound} we have 
 \[
  \frac{1}{2} \partial_{\tau} v_{\zeta} \leq V_{x_n} \leq \partial_{\tau} v_{\zeta}.
 \]
 Since 
 \[
 \begin{aligned}
      \partial_{\tau} v_{\zeta} &= U_{\tau} + \frac{\zeta}{4}\rho U_{\tau}
      + \frac{\zeta}{4} \cos \theta \ U \\
      &= U_{\tau} + \frac{\zeta}{4}(s+ \cos\theta) U,
 \end{aligned} 
 \]
 we have $(1/2) U_{\tau} \leq \partial_{\tau} v_{\zeta} \leq 2 U_{\tau}$. Integrating along the line segment from $(x',g(x'),y)$ to $(x',x_n,y)$ and using \eqref{e:distbound}, we obtain 
 \[
 \frac{1}{2}(x_n - g(x')) \leq \tau \leq x_n - g(x'). 
 \]
 The conclusion then follows. 
\end{proof}

The biggest difficulty in studying almost minimizers is that unlike minimizers, they do not satisfy an Euler-Lagrange equation. Instead, one must utilize the energy functional directly. Since almost minimizers do not satisfy a PDE in the positivity set, we cannot utilize a comparison principle. This next Lemma is a key element that will establish a comparison principle for almost minimizers. 
\begin{lemma} \label{l:subsol}
 Assume $u$ satisfies $(H1)$-$(H3)$, and $u \geq V=V_{M,\xi',\zeta} \in \mathcal{V}_{\mu^{2/3}}$
 in $B_1$. If  
 \[
  \frac{\zeta}{1-a} - \text{\emph{tr}}M \geq \mu, 
 \]
 and $\sigma \leq \mu^C$ with $C=C(s,n)$ large, then 
 \[
 u(x,y) \geq V_{M,\xi',\zeta}(x+t_0 e_n,y) \quad \text{ in } B_{1/4} \text{ for some } t_0>0. 
\]
\end{lemma}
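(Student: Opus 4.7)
The plan is to run a sliding argument combined with an energy comparison. For $t \geq 0$, consider the family of translates $V^{t}(x,y) := V(x + t e_n, y)$, which inherit from $V$ all the structural properties established in Section \ref{S:comparison}. In particular, the hypothesis $\zeta/(1-a) - \mathrm{tr}\, M \geq \mu$ together with $V \in \mathcal{V}_{\mu^{2/3}}$ upgrades the conclusion of Lemma \ref{l:viscsub} to a quantitatively strict subsolution with margin of order $\mu$ (not just the $\mu^{4/3}$ that $\mathcal{V}_{\mu^{2/3}}$ alone would give): namely
\[
\mathcal{L}_a V^{t} \;\geq\; c\mu\, U_\tau \;\geq\; c\mu\, |y|^{1-a}\quad\text{in } B_2^+(V^{t}),\qquad \partial V^{t}/\partial\tau^s = 1\text{ on } F(V^{t}).
\]
Moreover each $V^{t}$ is strictly monotone in $e_n$ by Proposition \ref{p:hodograph} and Lemma \ref{l:n-1direction}. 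The goal is to show $V^{t_0} \leq u$ in $B_{1/4}$ for some $t_0 > 0$.

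I argue by contradiction. If no such $t_0$ existed, there would be $t_k \downarrow 0$ and $(x_k,y_k) \in \overline{B_{1/4}}$ with $V^{t_k}(x_k,y_k) > u(x_k,y_k)$. Compactness and the hypothesis $V \leq u$ would force the limit point $(x_\ast, y_\ast) \in \overline{B_{1/4}}$ to satisfy $V(x_\ast, y_\ast) = u(x_\ast, y_\ast)$; that is, $V$ would touch $u$ from below at $(x_\ast,y_\ast)$. Theorem \ref{t:sep2} together with the nondegeneracy condition $(H3)$ rules out any touching point inside $\{u=0\}\cap\{V>0\}$, so $(x_\ast,y_\ast)\in\overline{B^{+}_2(V)}$. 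Since $V\le u$ throughout $B_1$ while the candidate contact point lies in the smaller ball $\overline{B_{1/4}}$, there is a sufficiently small but fixed $t_\ast>0$ for which $D := \{V^{t_\ast} > u\} \cap B_{1/2}$ is nonempty and compactly contained in $B_{1/2}$.

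Next, I apply the energy inequality $(H2)$ to the admissible competitor $v := \max(u, V^{t_\ast}) \in u + H_0^1(a, B_{1/2})$, obtaining $J(u, B_{1/2}) \le J(v, B_{1/2}) + \sigma$. On the other hand, splitting the difference $J(v) - J(u)$ over $D$ and integrating by parts using $V^{t_\ast} - u = 0$ on $\partial D$, the strict subsolution inequality gives
\[
J(v, B_{1/2}) - J(u, B_{1/2}) \;\leq\; -c\mu \int_D |y|\,(V^{t_\ast} - u) \;+\; \mathcal{F},
\]
where $\mathcal{F}$ is a flux term along $F(V^{t_\ast}) \cap \overline{D} \cap \{y = 0\}$ coming from the trace of $|y|^a \partial_y V^{t_\ast}$. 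Because $U$ is the prototypical two-dimensional minimizer of $J$, the free-boundary condition $\partial V^{t_\ast}/\partial\tau^s = 1$ exactly balances the $\lambda^+\bigl|\{V^{t_\ast}>0\}\setminus\{u>0\}\cap\mathcal{B}_{1/2}\bigr|$ term hidden inside $J(v)-J(u)$, making $\mathcal{F}$ either nonpositive or absorbable into the main gain. Combining with $(H2)$,
\[
c\mu \int_D |y|\,(V^{t_\ast} - u) \;\leq\; \sigma \;\leq\; \mu^{C}.
\]
Quantitative monotonicity $V^{t_\ast} - V \gtrsim t_\ast\, U_\tau$ (Lemma \ref{l:n-1direction}), together with the $C^{0,s}$ Hölder bound on $u$ from $(H1)$, bounds the left-hand side below by a fixed power $c\mu^{N}$ with $N=N(n,s)$. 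Choosing $C > N$ yields a contradiction.

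The main obstacle is the flux identification hidden in $\mathcal{F}$. Off the thin space, both $u$ and $V^{t_\ast}$ are $a$-harmonic --- the former by the standing assumption in Definition \ref{almostminforus} --- so integration by parts in $D \setminus \{y=0\}$ is routine; however the trace of $|y|^a \partial_y V^{t_\ast}$ along $F(V^{t_\ast}) \cap \{y=0\}$ must be matched exactly to the $\lambda^+$-area jump, and this relies on the precise normalization of $U$ noted after Proposition \ref{p:h3}. The careful bookkeeping of flux versus $\lambda^+$ contributions, together with ensuring that $D$ is compactly contained in $B_{1/2}$ so that no boundary flux escapes through $\partial B_{1/2}$, is the delicate ingredient that replaces the strong maximum principle available for minimizers.
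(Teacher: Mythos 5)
Your overall plan — a sliding argument combined with an energy comparison between $u$ and a max-competitor — is in the right spirit, but two of the key steps do not go through as written, and the paper's actual proof handles both with a construction you have not replicated.

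The central gap is in the energy estimate. Taking $v=\max(u,V^{t_*})$ and writing $D=\{V^{t_*}>u\}\cap B_{1/2}$, one computes
\[
J(v)-J(u)=\int_D|y|^a\bigl(|\nabla V^{t_*}|^2-|\nabla u|^2\bigr)+\lambda^+\bigl|D\cap\{u=0\}\cap\mathcal{B}_{1/2}\bigr|.
\]
The integration by parts in the first integral must be thrown onto $V^{t_*}$, not onto $u$ (an almost minimizer satisfies no Euler--Lagrange equation on the thin space, so $\mathrm{div}(|y|^a\nabla u)$ is uncontrolled there). With $w=V^{t_*}-u$ and $w=0$ on $\partial D$, and because $D\subset\{V^{t_*}>0\}$ is disjoint from $F(V^{t_*})$ so that $\mathrm{div}(|y|^a\nabla V^{t_*})$ has no singular part on $D$, one gets cleanly
\[
\int_D|y|^a\bigl(|\nabla V^{t_*}|^2-|\nabla u|^2\bigr)=-\int_D|y|^a|\nabla w|^2-2\int_D|y|^a\mathcal{L}_a V^{t_*}\,w\leq -2c\mu\int_D|y|\,w.
\]
There is no flux term $\mathcal{F}$: the boundary integral on $\partial D$ vanishes since $w=0$ there, and $F(V^{t_*})$ never enters because $D$ is disjoint from it. What remains is the genuinely positive volume term $\lambda^+|D\cap\{u=0\}\cap\mathcal{B}_{1/2}|$, which your argument does not control and which points the wrong way. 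The free boundary condition $\partial V^{t_*}/\partial\tau^s=1$ cannot "exactly balance" a term it never interacts with in this computation.

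The second gap is the claim that $D$ is compactly contained in $B_{1/2}$ for small fixed $t_*>0$. The hypothesis $V\leq u$ in $B_1$ does not prevent $V=u$ from occurring on $\partial B_{1/2}$; if it does, $V^{t_*}>u$ will propagate to the boundary for every $t_*>0$ and the competitor $\max(u,V^{t_*})$ is not admissible.

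The paper resolves both issues at once by replacing $V^{t_*}$ with the auxiliary function $\overline{V}=V_{\overline{M},\xi',\overline{\zeta}}(x+\tfrac{\mu}{8n}e_n,y)$ with modified parameters $\overline{\zeta}=\zeta-\tfrac{\mu}{2n}$, $\overline{M}=M+\tfrac{\mu}{2n}I$. By Proposition \ref{p:hodograph}, $\overline{V}\leq V\leq u$ near $\partial B_1$ (so $u_{\max}=\max\{u,\overline{V\}}=u$ near $\partial B_1$, fixing the admissibility problem), while $\overline{V}\geq V(x+\tfrac{\mu}{16n}e_n,y)$ in $B_{1/2}$ (which produces the quantitative gap $\overline{V}-u\geq c\mu$ at the touching point). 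The Kato identity $J(u_{\max})+J(u_{\min})=J(\overline{V})+J(u)$ and almost minimality then give $J(u_{\min})-J(\overline{V})\leq\sigma$, and the crucial lower bound $J(u_{\min})-J(\overline{V})\geq 2\mu^2\int(\overline{V}-u_{\min})|y|$ comes not from integration by parts but from Proposition \ref{p:viscmin}: $\overline{V}$ is shown to minimize the forced functional $\tilde{J}(w)=J(w)+2\mu^2\int w|y|$ over the constraint set $\min\{V,\overline{V}\}\leq w\leq\overline{V}$, via a sliding argument using the strong maximum principle in the positivity set and the Hopf lemma at the free boundary. That is precisely where the free boundary condition and the area term are absorbed, replacing the flux-matching you attempted. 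Without the auxiliary $\overline{V}$ and the comparison in Proposition \ref{p:viscmin}, the argument does not close.
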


\begin{proof}
 We will show that for each point $(x_0,y_0) \in \overline{B}_{1/4}$ there exist some $r$ and $t$ both depending on $(x_0,y_0)$ such that $V(x+t,y) \leq u$ for $(x,y) \in B_{r}(x_0,y_0)$. Since $\overline{B}_{1/4}$ is compact, we may choose a finite covering and find a minimum $t_0$, and the conclusion of the Lemma follows. 

 We first consider the case in which $y_0>0$. 
 Since $\mathcal{L}_a(u-V) \leq 0$ whenever $y>0$, it follows from the comparison principle for $\mathcal{L}_a$ that $(u- V)>0$ in a neighborhood of $(x_0,y_0)$, and the existence of $t,r$ depending on $(x_0,y_0)$ easily follows. 
 
 We now consider the case in which $y_0=0$. We first treat the subcase in which $x_0$ is an interior point for the coincidence set (i.e. $u(x,0)=0$ if $|x-x_0|<r_1$ is small enough). 
 Then from the Hopf principle \eqref{e:hopf} for the equation $\mathcal{L}_a$ we have 
 \[
  \lim_{y \to 0} y^a \partial_y (u - V)(x_0,y)>0. 
 \]
 Also 
 \[
  \lim_{y \to 0} y^a \partial_y (u - V)(x_0,y)=L \quad \text{ with } L < \infty.  
 \]
 (This last statement is true because $(x_0,0)$ is an interior point for the coincidence set.)
 Then there exists $L < L_1 < \infty$ as well as $r_2$ small enough such that 
 \[
  V(x,y) < L_1 y^{1-a} < u(x,y) \quad \text{ for any } (x,y) \in B_{r_2}(x_0,0).  
 \]
 It follows that for small $t$ that $V(x_0 + te_n,y) \leq u(x,y)$ in $B_{r_2/2}(x_0,0)$.  

 We now turn to the more difficult case in which $(x_0,0) \in \overline{\{V(x,0)>0\}}$. We will now
 utilize the fact that $u$ is an almost minimizer (property $H1$). 
 We define 
 \[
 \overline{V}(x,y) := V_{\overline{M}, \xi', \overline{\zeta}}\left(x + \frac{\mu}{8n} e_n,y\right), \quad \overline{\zeta}= \zeta - \frac{\mu}{2n}, \quad \overline{M}= M + \frac{\mu}{2n} I. 
 \]
 From Proposition \ref{p:hodograph}, we have 
 \[
 \overline{V}(x,y)\leq V(x,y) \quad \text{ near } \partial B_1, \quad \overline{V}(x,y)\geq V\left(x+ \frac{\mu}{16n},y\right) \quad \text{ in } B_{1/2}. 
 \]

We now set 
\[
 u_{\max} := \max\{u, \overline{V}\}, \quad u_{\min}:=\min\{u,\overline{V}\},
\]
and note that 
\[
 u_{\max} = u, \quad u_{\min} = \overline{V} \quad \text{ near } \partial B_1. 
\]
Using that $u$ is an almost minimizer we have 
\[
J(u,B_1) \leq J(u_{\max},B_1) + \sigma. 
\]
From direct computations we have 
\[
J(u_{\max},B_1) + J(u_{\min},B_1)=J(\overline{V},B_1) + J(u,B_1). 
\]
Putting the above two together we have 
\begin{equation} \label{e:sigabove}
J(u_{\min},B_1) - J(\overline{V}, B_1) \leq \sigma.  
\end{equation}
In order to obtain an inequality from below for the difference in energy we first consider the modified functional 
\[
\tilde{J}(w,B_1) = J(w,B_1) + \mu^2 \int_{B_1} 2w |y|,
\]
among all $H^1(a,B_1)$ functions $w$ satisfying $\min\{V,\overline{V}\} \leq w \leq \overline{V}$. From Proposition \ref{p:viscmin} we have that $\overline{V}$ is the minimizer, and thus we can conclude 
\begin{equation} \label{e:Ebelow}
  J(u_{\min},B_1) - J(\overline{V},B_1) \geq 2\mu^2 \int_{B_1} (\overline{V}- u_{\min})|y|. 
\end{equation}

Now we will assume by contradiction that $V$ is tangent by below to $u$ at some point $(x_0,0) \in \overline{\{V(x,0)>0\}} \cap \overline{B}_{1/4}$. This means that any translation $V(x+t e_n,y)$ with $t>0$ cannot be below $u$ in a neighborhood of $(x_0,0)$. We now aim to show an integral bound of the form 
\begin{equation}  \label{e:Ebelow1}
 \int_{B_1} \left(\overline{V} - u_{\min}\right) |y| \geq \mu^C. 
\end{equation}
To do so we utilize $\overline{V}$ which satisfies $\overline{V}(x,y)\geq V(x + \mu/(16n) e_n,y)$ in 
$\overline{B}_{1/4}$. Then 
\[
 (\overline{V} - u)(x_0,0) \geq V(x_0 + \mu/(16n) e_n,0) - u(x_0,0)= V(x_0 + \mu/(16n) e_n,0) - V(x_0,0). 
\]
Using Lemma \ref{l:n-1direction} we have 
\[
 V_{x_n}(x_0 + te_n,0) \geq \frac{1}{2}U_{\tau}(d(x_0+te_n,S),0) \geq \frac{1}{2}U_{\tau}(1,0)
 \geq c. 
\]
The second inequality comes from the fact that $U_{\tau}(\tau,0)$ is decreasing in $\tau$. Then 
\[
 (\overline{V} - u)(x_0,0) \geq V(x_0 + \mu/(16n) e_n,0) - V(x_0,0) 
 = \int_0^{\mu/(16n)} V_{x_n}(x_0 + t e_n,0) \ dt \geq c\mu/(16n)
\]
From the uniform $s$-H\"older continuity for both $u$ and $\overline{V}$ we obtain that 
\[
 \overline{V} - u_{\min} \geq \overline{V} - u \geq c \mu \quad \text{ in } B_{c \mu^{1/s}}.
\]
Then 
\[
\int_{B_1} (\overline{V}- u_{\min})|y| \geq c \mu^{C(s)}.
\]
for some large $C(s)$ depending on dimension and $s$. We then arrive at a contradiction from \eqref{e:sigabove} if $\sigma \leq \mu^{C(s)+1}$. Then no such point $(x_0,y_0)$ exists, and this concludes the proof.

\end{proof}
We now prove an analogous Lemma from above. 
\begin{lemma}  \label{l:supsol}
 Assume that $u$ satisfies $(H1)$-$(H3)$ and $u \leq V=V_{M, \xi', \zeta} \in \mathcal{V}_{\mu^{2/3}}$ in $B_1$. If 
 \[
  \frac{\zeta}{1-a} -\textit{tr}\ M \leq - \mu,
  \]
  and $\sigma \leq \mu^C$ with $C(s, n)$ large, then 
  \[
  u(x,y) \leq V_{M, \xi', \zeta}(X-t_0 e_n,y) \quad \text{ in } B_{1/4} \text{ for some } t_0 >0. 
  \]
\end{lemma}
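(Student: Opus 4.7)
The proof mirrors that of Lemma~\ref{l:subsol} with the roles of ``below'' and ``above'' reversed. It suffices to show that for each $(x_0, y_0) \in \overline{B}_{1/4}$ there exist $r, t > 0$ (depending on the point) such that $V(x - t e_n, y) \geq u(x, y)$ on $B_r(x_0, y_0)$; compactness of $\overline{B}_{1/4}$ then yields a finite subcover and a uniform $t_0 > 0$.

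Two of the three local cases are handled by qualitative arguments. If $y_0 > 0$, then by Lemma~\ref{l:viscsub} and the hypothesis, $\mathcal{L}_a V \leq -\mu^2 |y| < 0$ while $\mathcal{L}_a u = 0$ in a neighborhood of $(x_0, y_0)$, so $V - u \geq 0$ is a strict supersolution of $\mathcal{L}_a$; the strong minimum principle gives $V(x_0, y_0) > u(x_0, y_0)$ and a small translation in the $-e_n$ direction preserves the inequality locally. If $y_0 = 0$ and $(x_0, 0)$ is interior to $\{V(\cdot, 0) = 0\}$, then $u \leq V$ forces $u \equiv 0$ on a thin-space neighborhood of $x_0$ as well, and Proposition~\ref{p:bhpa} applied to $V$ produces a strictly larger fractional normal derivative than the finite one of $u$, so $V \geq L\,y^{1-a} > u$ near $(x_0, 0)$ for some $L > 0$, giving the translation.

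The remaining case $y_0 = 0$, $(x_0, 0) \in \overline{\{u(\cdot, 0) > 0\}}$ uses the almost-minimality $(H2)$. In analogy with Lemma~\ref{l:subsol}, introduce the perturbed profile
\[
\underline{V}(x, y) := V_{\underline{M}, \xi', \underline{\zeta}}\!\left(x - \tfrac{\mu}{8n} e_n, y\right), \quad \underline{\zeta} := \zeta + \tfrac{\mu}{2n}, \quad \underline{M} := M - \tfrac{\mu}{2n} I,
\]
chosen so that $\underline{V} \in \mathcal{V}_{\mu^{2/3}}$ still satisfies $\underline{\zeta}/(1-a) - \mathrm{tr}\,\underline{M} \lesssim -\mu/2$, keeping $\underline{V}$ a comparison supersolution by Lemma~\ref{l:viscsub}, and such that by Proposition~\ref{p:hodograph},
\[
\underline{V} \geq V \text{ near } \partial B_1, \qquad \underline{V}(x, y) \leq V\!\left(x - \tfrac{\mu}{16n} e_n, y\right) \text{ in } B_{1/2}.
\]
Set $u_{\max} := \max\{u, \underline{V}\}$ and $u_{\min} := \min\{u, \underline{V}\}$; since $u \leq V \leq \underline{V}$ near $\partial B_1$, one has $u_{\min} = u$ there, so $u_{\min}$ is an admissible competitor in $(H2)$. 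The identity $J(u_{\max}) + J(u_{\min}) = J(u) + J(\underline{V})$ then yields $J(u_{\max}, B_1) - J(\underline{V}, B_1) \leq \sigma$. The supersolution analog of the viscosity minimality property used in Lemma~\ref{l:subsol} (namely, that $\underline{V}$ minimizes $\tilde J(w) := J(w) - \mu^2\!\int 2 w |y|$ among $\underline{V} \leq w \leq \max\{V, \underline{V}\}$) supplies the reverse estimate $J(u_{\max}) - J(\underline{V}) \geq 2 \mu^2 \int_{B_1}(u_{\max} - \underline{V})|y|$.

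Now assume for contradiction that no positive translation works at $(x_0, 0)$, so $V$ is tangent from above to $u$ there and $u(x_0, 0) = V(x_0, 0)$. Then
\[
u_{\max}(x_0, 0) - \underline{V}(x_0, 0) \geq V(x_0, 0) - V(x_0 - \tfrac{\mu}{16n} e_n, 0) \geq c\mu
\]
by Lemma~\ref{l:n-1direction} (integrating $V_{x_n}$ along the segment). The uniform $C^{0,s}$ regularity of $u$ and $\underline{V}$ propagates this $c\mu$ gap to a ball of radius $\sim \mu^{1/s}$ about $(x_0, 0)$, producing $\mu^2 \int (u_{\max} - \underline{V})|y| \geq c\,\mu^{C(s,n)}$, which contradicts the upper bound $\sigma$ provided $\sigma \leq \mu^{C(s,n)+1}$. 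The main obstacle is establishing the supersolution version of the viscosity minimality proposition for $\underline{V}$ with the sign-flipped modifier $-\mu^2 \int 2w|y|$; the verification that the perturbed parameters preserve both $\underline{V} \in \mathcal{V}_{\mu^{2/3}}$ and the supersolution inequality of Lemma~\ref{l:viscsub} is a routine mirror-image of the subsolution computation.
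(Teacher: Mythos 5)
Your proof has a genuine gap in the final case, precisely where the paper's proof departs from the subsolution argument. You claim
\[
V(x_0,0) - V\!\left(x_0 - \tfrac{\mu}{16n}e_n,0\right) \geq c\mu
\]
by integrating $V_{x_n}$ along the segment and invoking Lemma~\ref{l:n-1direction}. In the subsolution case this works because the touching point $x_0$ lies in $\overline{\{V(\cdot,0)>0\}}$ and the shift is in the $+e_n$ direction, so $d(x_0+te_n,S)$ stays positive and one gets $V_{x_n}\geq \tfrac12 U_\tau(1,0)\geq c$ along the whole segment. In the supersolution case the touching point lies in $\overline{\{u(\cdot,0)>0\}}$, which is contained in $\overline{\{V(\cdot,0)>0\}}$ but may be arbitrarily close to, or on, $F(V)$. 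The shift is now in the $-e_n$ direction, so $\tau(x_0-te_n)=\tau(x_0)-t$ turns negative as soon as $t>d(x_0,F(V))$, and $V_{x_n}$ vanishes there. The integral is then only over $(0,d(x_0,F(V)))$ and is of order $d(x_0,F(V))^s$, which gives no lower bound in terms of $\mu$. In particular when $d(x_0,F(V))=0$ (e.g.\ $x_0\in F(u)\cap F(V)$) your inequality is false.

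The missing idea is exactly the case split the paper performs after remarking ``this is where the proof will differ from the subsolution case.'' When $d((x_0,0),\{V=0\})\geq \mu^2$, the segment-integration argument does work, but gives the weaker bound $c\mu^2$ (not $c\mu$), which still suffices. When $d((x_0,0),\{V=0\})<\mu^2$, one cannot get the lower bound from translating $V$ at all. Instead one uses the upper bound $V(x-\mu/(16n)e_n,y)\leq C\mu^{-s}|y|^{1-a}$ near $(x_0,0)$ together with a further split on $r=d(x_0,F(u))$: if $r\leq\mu^{3/2}$ one invokes property $(H3)$ to give $u\geq c\mu^{-3s/2}|y|^{1-a}\gg\underline{V}$; if $r\geq\mu^{3/2}$, nondegeneracy gives $u\geq c\mu^{3s/2}$ on $B_{\mu^{3/2}/2}(x_0,0)$ while $\underline{V}\leq C\mu^{2s}\ll\mu^{3s/2}$ there. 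Either way one recovers the integral lower bound $\geq\mu^{C}$. So the ``main obstacle'' you flag (the supersolution analog of Proposition~\ref{p:viscmin}) is indeed routine, but the actual new content of this lemma, the use of $(H3)$ and nondegeneracy to handle touching points near the free boundary, is absent from your argument.
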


\begin{proof}
 We proceed similarly as in Lemma \ref{l:subsol}. We assume by contradiction that there exists $(x_0,y_0) \in \overline{B}_{1/4}$ at which $V$ is tangent to $u$ from above. By the same arguments as in Lemma \ref{l:subsol}, we conclude $y_0=0$ and that $x_0 \in \overline{\{u(x,0)>0\}}$.
 
 We define
 \[
  \underline{V}(x,y) := V_{\underline{M}, \xi, \underline{\zeta}} \left(x-\frac{\mu}{8n}e_n, y  \right),
  \quad \underline{\zeta} = \zeta+ \frac{\mu}{2n}, \quad \underline{M}= M - \frac{\mu}{2n} I. 
 \]
 As before we utilize \eqref{e:trap1} to conclude 
 \[
  \underline{V}(x,y) \geq V(x,y) \quad \text{ near } \partial B_1, 
  \quad \underline{V}(x,y)\leq V\left(x- \frac{\mu}{16n}e_n,y \right) \quad \text{ in } B_{1/2}. 
 \]
 We define
 \[
  u_{\max} := \max\{u, \underline{V}\}, \quad u_{\min}:= \min\{u, \underline{V}\},
 \]
 and we note that 
 \[
  u_{\max}=\underline{V}, \quad u_{\min}=u \quad \text{ near } \partial B_1. 
 \]
 Using $(H2)$ we have 
 \[
 J(u,B_1)\leq J(u_{\min},B_1) + \sigma. 
 \]
 Direct computations give  
 \[
 J(u_{\min},B_1) + J(u_{\max})= J(u,B_1) + J(\underline{V},B_1)\leq J(u_{\min},B_1)+ \sigma 
 + J(\underline{V},B_1),
 \]
 so that
 \[
 J(u_{\max},B_1)-J(\underline{V},B_1) \leq \sigma. 
 \]
 We now aim to obtain an inequality from below. We consider the modified functional 
 \[
 \tilde{J}(w,B_1):=J(w,B_1) - \mu^2 \int_{B_1} 2 w |y|,
 \]
 among all $H^1(a,B_1)$ functions $w$ satisfying $\max\{V,\underline{V}\} \geq w \geq \underline{V}$. From the analogue of Proposition \ref{p:viscmin} for supersolutions we have that $\underline{V}$ is the minimizer of the energy $\tilde{J}$. Thus, 
 \begin{equation} \label{e:supersolJ}
   J(u_{\max},B_1)- J(\underline{V},B_1) \geq \mu^2 \int_{B_1} (u_{\max}-\underline{V})|y|. 
 \end{equation}
  We now aim to show a bound of the form 
 \begin{equation} \label{e:lowermu}
     \int_{B_1} (u_{\max}-\underline{V})|y| \geq \mu^C
 \end{equation}
 which for small $\sigma$ will provide us with a contradiction. This is where the proof will differ from the subsolution case. We first consider the case in which $d((x_0,0), \{V=0\})\geq \mu^2$. 
 We recall from the beginning of the proof that $\underline{V}(x,y)\leq V\left(x-\frac{\mu}{16n}e_n, y \right)$ in $B_{1/2}$. Then
 \[
  (u-\underline{V} )(x_0,0) \geq u(x_0,0)-V(x_0-\mu/(16n)e_n,0) =V(x_0,0)- V(x_0-\mu/(16n)e_n,0).
 \]
 We now utilize the fact that $d((x_0,0),\{V=0\})\geq \mu^2$ and our bound for $V_{x_n}$ from Lemma \ref{l:n-1direction} to obtain 
 \[
  V(x_0,0)-V(x-\mu/(16n)e_n,0)=\int_{-\mu/(16n)}^0 V_{x_n}(x_0 - te_n, 0) \ dt 
  \geq \int_{-\mu^2}^0 V_{x_n}(x_0 - te_n, 0) \ dt 
  \geq c \mu^2. 
 \]
 Utilizing the $C^{0,s}$ bound on both $u$ and $\underline{V}$ as in Lemma \ref{l:subsol}, 
 we obtain \eqref{e:lowermu} and achieve a contradiction for small $\sigma$.

 We now consider the second case in which $d((x_0,0),\{V=0\}) < \mu^2$.
 From the construction of $V$, we have $V(x-\mu/(16n)e_n,y)=v_{\zeta}(\tau,\eta)\leq 2U(\tau,\eta)$ with $\tau \leq 32/n$, so that 
  \begin{equation}  \label{e:Vbelow} 
  V(x - \mu/(16n) e_n,y)\leq C\mu^{-s}|y|^{1-a} \quad \text{ in } B_{\mu^2}(x_0,0).     
 \end{equation}
 We denote $r=d((x_0,F(u))$. If $r\leq \mu^{3/2}$, then by $(H3)$ we have
 \[
 u \geq c\mu^{-s3/2}|y|^{1-a} \geq \frac{c}{2} \mu^{-s3/2}|y|^{1-a} + \underline{V}, \quad \text{ in }
 B_{\mu^{2}}(x_0,0),
 \]
 and the integral bound follows. We now consider the second subcase in which $r\geq \mu^{3/2}$.
 By the nondegeneracy property, we have 
 \[
 u(x,y) \geq c\mu^{s(3/2)} \quad \text{ for } (x,y) \in B_{\mu^{3/2}/2}(x_0,0).     
 \]
 Now from \eqref{e:Vbelow} we have if $(x,y) \in B_{\mu^{3/2}/2}(x_0,0)$ that
 \[
 \underline{V}(x,y) \leq C \mu^{-s} |y|^{2s} \leq C \mu^{-s} \mu^{3s} =C \mu^{2s} 
 \leq \frac{c}{2} \mu^{3/2}.   
 \]
 Then  
 \[
 u(x,y)-\underline{V}(x,y) \geq \frac{c}{2}\mu^{s(3/2)} \quad \text{ in } B_{\mu^{3/2}/2}(x_0,0).  
 \]
 The integral bound \eqref{e:lowermu} then follows, and we again achieve a contradiction for small $\mu$. 
 \end{proof}

\section{Compactness}\label{S:compactness}
In this section, we prove two properties that allow us to use a compactness argument and obtain a limiting solution. To do so, we will first
take a normalized hodograph transform of $u$, see \eqref{e:hodograph}. A Harnack inequality will give uniform convergence as $\epsilon \to 0$. Finally, the second property will show that the limit solution is a viscosity solution.

We begin with the following notation: we denote the half-hyperplane $P$ by 
\[
 P:= \{(x,0) \in \mathbb{R}^n \times \{0\}: x_n \leq 0\}
\]
and 
\[
L:= \{(x,0) \in \mathbb{R}^n \times \{0\}: x_n = 0\}.
\]
Also, we will consider translations of the solution $U$ denoted by 
\[
U_b := U(x+be_n,y),  \quad b \in \mathbb{R}. 
\]
We will assume that $u$ satisfies the assumptions $(H1)-(H3)$ as well as the $\epsilon$-flatness assumption 
\begin{equation} \label{e:epsilonflat}
 U(x-\epsilon e_n,y) \leq u(x,y) \leq U(x+\epsilon e_n, y) \quad \text{ in } B_1. 
\end{equation}

For the thin one-phase problem it is convenient to utilize the Hodograph transform. To motivate the use of the Hodograph transform, let us contrast the thin one-phase problem with the classical one-phase Alt-Caffarelli problem. The prototypical solution for the Alt-Caffarelli problem is $x_1^+$. If a solution $u$ satisfies $(x_1 - \epsilon)^+ \leq u \leq (x_1 + \epsilon)^+$, and one seeks to obtain an improvement in flatness, then PDE techniques make proving estimates in vertical translations fairly easy. In the positivity set, vertical and horizontal translations for $x_1$ are the same. However, in the thin one-phase problem, the prototypical minimizer $U$ has $C^{0,s}$ growth, so horizontal and vertical translations are not the same. Horizontal translations for $U$ are converted to vertical translations via the Hodograph transform, and PDE techniques can be applied to achieve these vertical translations in the Hodograph variables. A second and even more important reason for employing the Hodograph transform is that a convenient limiting equation (see Definition \ref{d:eq}) is obtained in the Hodograph variables.

We define the multivalued map $\tilde{u}$ as the $\epsilon$-normalized Hodograph transform of $u$ with respect to $U$ via the formula 
\begin{equation} \label{e:hodograph}
 U(x,y)=u(x - \epsilon \tilde{u}(x,y)e_n, y). 
\end{equation}
Since $U_{e_n}(x,y) > 0$ for $(x,y) \notin P$, then \eqref{e:epsilonflat} implies 
\[
|\tilde{u}| \leq 1 \quad \text{ in } B_{1-\epsilon} \setminus P. 
\]

In this section we will show that for small $\epsilon$, $\tilde{u}$ is approximated uniformly on compact subsets of $B_1$ by a viscosity solution to the linearized equation 
\begin{equation}  \label{e:linearized}
 \begin{cases}
    &\text{div}(|y|^a \nabla (U_n h))=0 \quad \text{ in } B_1 \setminus P, \\
    &|\nabla_r h|=0, \quad \text{ on } L. 
 \end{cases}
\end{equation}
Here $r=x_n^2 + y^2$, which is the distance to $L$. 
The definition of a viscosity solution $h$ to the linearized problem is given by the following. 
\begin{definition} \label{d:eq}
  $h$ is a solution to \eqref{e:linearized} if $h \in C(B_1)$, $h$ is even in $y$ and satisfies 
  \begin{itemize}
  \item div$(|y|^a \nabla (U_n h)) =0$ \quad in $B_1 \setminus P$. 
  \item Let $\phi$ be a continuous function $\phi$ which satisfies 
  \[
   \phi(x,y) = \phi(x_0',0,0) + a_1(x_0',0,0) \cdot (x' - x_0')+a_2(x_0',0,0)r + O(|x'-x_0'|^2 + r^{1+\gamma})
  \]
  for some $\gamma>0$ and $a_2(x_0',0,0)\neq 0$. If $a_2(x_0',0,0)>0$, then $\phi$ cannot touch $h$ by below at $(x_0',0,0)$, and if $a_2(x_0',0,0)<0$, then $\phi$ cannot touch $h$ by above at $(x_0',0,0)$. 
  \end{itemize}
\end{definition}

We will need some useful estimates for the $\epsilon$-normalized Hodograph transform. We state here the analogue of Proposition 2.8 in \cite{SS3}.  Notice that since both $U$ and $U_n$ are smooth away from the hyperplane $P$, the proof is identical to the case for $s=1/2$. 

\begin{proposition} \label{p:epsilonh}
  Let $\phi$ be a smooth function in $B_{\lambda}(x_0,y_0) \subset \mathbb{R}^{n+1} \setminus P$. For small $\epsilon$ we define $\phi_{\epsilon}$ by 
  \[
   U(x,y)=\phi_{\epsilon}(x-\epsilon \phi(x,y)e_n,y). 
  \]
  Then 
  \[
   \mathcal{L}_a \phi_{\epsilon} = \mathcal{L}_a \epsilon (U_n \phi) + O(\epsilon^2) \quad
   \text{ in } B_{\lambda/2}(x_0,y_0),
  \]
  with the remainder $O(\epsilon^2)$ depending on $\| \phi\|_{C^5}$ and $\lambda$. 
  In particular, if $\phi=Q/U_n$ with $Q$ a quadratic polynomial, then 
  \[
  \mathcal{L}_a \phi_{\epsilon}= \epsilon \mathcal{L}_a Q  + O(\epsilon^2) \quad 
  \text{ in } B_{\lambda/2}(x_0,y_0).
  \]
\end{proposition}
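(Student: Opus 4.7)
The plan is to Taylor expand the implicit defining relation $U(x,y) = \phi_\epsilon(x-\epsilon\phi(x,y)e_n,y)$ in powers of $\epsilon$ and extract the leading correction to $\phi_\epsilon$ about $U$. Because $B_\lambda(x_0,y_0)$ lies in $\mathbb{R}^{n+1}\setminus P$, both $U$ and $U_n$ are smooth there with all derivatives bounded in terms of $\lambda$ and $d(B_\lambda(x_0,y_0),P)$, and $\mathcal{L}_a U=0$ on $B_\lambda(x_0,y_0)$. Moreover, the map $T_\epsilon(x,y):=(x-\epsilon\phi(x,y)e_n,y)$ has Jacobian $I+O(\epsilon\|\phi\|_{C^1})$, so for $\epsilon$ small it is a smooth diffeomorphism of $B_{3\lambda/4}(x_0,y_0)$ onto a near copy of itself, and $\phi_\epsilon=U\circ T_\epsilon^{-1}$ is a well-defined smooth function with $\phi_\epsilon\to U$ in every $C^k$ on compact subsets as $\epsilon\to 0$.

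Expanding the right-hand side of the defining identity in the $x_n$ variable to second order gives
\[
U(x,y)=\phi_\epsilon(x,y)-\epsilon\phi(x,y)\,\partial_n\phi_\epsilon(x,y)+\tfrac12\epsilon^2\phi(x,y)^2\,\partial_n^2\phi_\epsilon(x,y)+\cdots,
\]
and iterating this relation (or equivalently solving $z=x-\epsilon\phi(x,y)e_n$ for $x$ as a perturbation series in $\epsilon$) produces
\[
\phi_\epsilon(x,y)=U(x,y)+\epsilon\,U_n(x,y)\phi(x,y)+\epsilon^2 R_\epsilon(x,y),
\]
where $R_\epsilon$ is smooth on $B_{\lambda/2}(x_0,y_0)$ with $\|R_\epsilon\|_{C^2}$ bounded uniformly in $\epsilon$ by a constant depending on $\lambda$ and $\|\phi\|_{C^5}$. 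Applying $\mathcal{L}_a=\Delta+(a/y)\partial_y$ and using $\mathcal{L}_aU=0$ then yields
\[
\mathcal{L}_a\phi_\epsilon=\epsilon\,\mathcal{L}_a(U_n\phi)+\epsilon^2\mathcal{L}_a R_\epsilon=\epsilon\,\mathcal{L}_a(U_n\phi)+O(\epsilon^2)
\]
in $B_{\lambda/2}(x_0,y_0)$, which is the claimed expansion. The specialization to $\phi=Q/U_n$ with $Q$ quadratic is then immediate since $U_n\phi=Q$ forces $\mathcal{L}_a(U_n\phi)=\mathcal{L}_aQ$.

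The only real obstacle is the $C^2$ bound on the remainder $R_\epsilon$, since the pointwise Taylor expansion by itself only controls lower-order norms. The standard way to handle this is to differentiate the relation $U=\phi_\epsilon\circ T_\epsilon$ twice in $(x,y)$ and solve inductively for the derivatives of $\phi_\epsilon$ up to order two; each differentiation costs one derivative of $\phi$ and of $U$, which is why one needs $\|\phi\|_{C^5}$ (two derivatives for defining $R_\epsilon$ plus two more to take its $C^2$ norm, with an extra derivative used in controlling the inverse map $T_\epsilon^{-1}$). Since $U$ and $U_n$ are smooth with uniformly bounded derivatives on $B_\lambda(x_0,y_0)$, all of this book-keeping is purely mechanical and proceeds exactly as in the case $s=1/2$ (Proposition~2.8 of \cite{SS3}); no feature of the thin space or of the singularity along $P$ enters because we are working strictly away from $P$.
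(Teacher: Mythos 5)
Your proposal is correct and follows the same route the paper intends: the paper itself gives no proof, merely noting the statement ``is identical to the case $s=1/2$'' and citing Proposition 2.8 of \cite{SS3}, and your Taylor-expansion/diffeomorphism argument is precisely that argument transplanted verbatim, with the sole observation (which both you and the paper make) that $U$, $U_n$ are smooth and non-degenerate away from $P$, so nothing new is needed.
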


In order to obtain a limit solution $h$ from $\tilde{u}$, we need to prove two properties: firstly, a Harnack inequality that will provide uniform convergence as $\epsilon \to 0$. The second property will show that the limit solution $h$ is a viscosity solution. 

$(P1)$ \textit{Harnack Inequality:}

Given $\delta>0$, there exists $\epsilon_0=\epsilon_0(\delta)$ such that if $\epsilon \leq \epsilon_0$ and 
\[
 u \geq  U_b \ \text{ in } B_{r}(x_0,y_0) \subset B_1, \text{ and } |b|\leq \epsilon, r \geq \delta,  
\]
and
\[
u(x_1,y_1) \geq  U_{b+\tau \epsilon}(x_1, y_1) 
\quad \text{ for some } (x_1,y_1) \text{ with } B_{r/4}(x_1,y_1) \subset B_r(x_0,y_0) \setminus P, \text{ and } \tau \in [\delta, 1], 
\]
then 
\[
u \geq  U_{b+c\tau \epsilon} \quad \text{ in } B_{r/2}(x_0,y_0) 
\]
 for some constant $c>0$. 

 An analogous property holds when $\leq$ and $-\tau$ replace $\geq$ and $\tau$ respectively. 

$(P2)$ \textit{ Viscosity Property:}

Given $\delta>0$, there exists $\epsilon_0 = \epsilon_0(\delta)$ such that if $\epsilon\leq \epsilon_0$, then 

$(a)$ we cannot have $u(x_0,y_0)=q(x_0,y_0)$ and $u \geq q$ in $B_{\delta}(x_0,y_0) \subset B_1\setminus P$ where
\[
q \in C^2(B_{\delta}(x_0,y_0)) \quad \|D^2 q \| \leq \delta^{-1}, 
\quad \mathcal{L}_a q \geq \delta \epsilon. 
\]

$(b)$ we cannot have $u \geq V$ in $B_{\delta}(x_0,y_0) \subset B_1$ with $(x_0,y_0) \in L$ and $V$ tangent by below to $u$ in $B_{\delta/4}(x_0,y_0)$ where $V$ is a translation of a function $V_{M, \xi', \zeta}(x+te_n, y) \in \mathcal{V}_{\delta^{-1} \epsilon}$, 
\[
V(x,y):=V_{M, \xi', \zeta}(x+te_n, y), \quad \text{ with } \frac{\zeta}{1-a} - \text{tr} M \geq \epsilon.  
\]

Similarly, $a$ and $b$ hold when we compare $u$ with functions $q,V$ by above and $\mathcal{L}_a q \leq -\delta \epsilon$ and $\zeta/(1-a) - \text{tr} M \leq - \epsilon$.

We will now show that the properties $(P1)$ and $(P2)$ are satisfied. 
\begin{theorem} \label{t:p1p2}
Properties $(P1)$ and $(P2)$ are satisfied. 
\end{theorem}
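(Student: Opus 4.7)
The plan is to dispatch the viscosity property $(P2)$ using the comparison machinery of Section \ref{S:comparison}, and to spend the bulk of the argument on $(P1)$, which is a Harnack-type propagation combining the boundary Harnack principle (Proposition \ref{p:bhpa}) with the subsolution/supersolution family $\mathcal{V}_{\mu}$.

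For $(P2)(b)$ the statement is essentially a direct restatement of Lemma \ref{l:subsol} (resp.\ Lemma \ref{l:supsol} in the supersolution case). Given $V = V_{M,\xi',\zeta} \in \mathcal{V}_{\delta^{-1}\epsilon}$ tangent from below to $u$ in $B_{\delta/4}(x_0,y_0)$ with $\zeta/(1-a) - \mathrm{tr}\, M \geq \epsilon$, I would set $\mu = \epsilon$ in Lemma \ref{l:subsol}; this requires $\delta^{-1}\epsilon \leq \epsilon^{2/3}$, i.e.\ $\epsilon \leq \delta^3$. Choosing $\epsilon_0(\delta) \leq \delta^3$ and $\sigma \leq \epsilon^C$ with the $C$ from that lemma, the lemma produces a strictly positive shift $t_0 > 0$ with $u \geq V(\cdot + t_0 e_n, \cdot)$ on a suitable subball, contradicting tangency. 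For $(P2)(a)$, the flatness \eqref{e:epsilonflat} combined with $B_\delta(x_0,y_0) \subset B_1 \setminus P$ forces $u > 0$ on the thin part of the ball for $\epsilon_0$ small, hence by $(H2)$ the function $u$ almost minimizes the weighted Dirichlet energy on $B_\delta(x_0,y_0)$. Comparing with the $\mathcal{L}_a$-harmonic replacement $v$ yields $\|u - v\|_{L^\infty(B_{\delta/4}(x_0,y_0))} \leq C\sqrt{\sigma}$ by standard energy-replacement estimates. If $q$ touched $u$ from below with $\mathcal{L}_a q \geq \delta\epsilon$, then $w := v - q$ would satisfy $\mathcal{L}_a w \leq -\delta\epsilon$ and $|w| \leq C\sqrt{\sigma}$; comparison with an explicit paraboloid of opening $\delta\epsilon$ on $B_{\delta/4}$ then forces $\delta^3 \epsilon \leq C\sqrt{\sigma}$, which is impossible for $\sigma \leq \epsilon^C$ with $C = C(\delta)$ large.

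For $(P1)$, set $w = u - U_b$, non-negative on $B_r(x_0,y_0)$. Off the thin space both $u$ and $U_b$ are $\mathcal{L}_a$-harmonic, so $w$ is $\mathcal{L}_a$-harmonic in $B_r(x_0,y_0) \cap \{y \neq 0\}$. At the interior point $(x_1,y_1) \in B_r \setminus P$, the hypothesis $u(x_1,y_1) \geq U_{b+\tau\epsilon}(x_1,y_1)$ together with the mean value theorem and the estimate \eqref{e:diadic} gives $w(x_1,y_1) \geq c\tau\epsilon\, U_n(x_1,y_1)$. Since $w, U_n \geq 0$ are $\mathcal{L}_a$-harmonic on $B_r \cap \{y>0\}$ and both vanish on the coincidence set $\{x_n \leq b,\, y=0\}$, iterating Proposition \ref{p:bhpa} along a Harnack chain of balls centered on the thin space from $(x_1,y_1)$ to $B_{r/2}(x_0,y_0)$ upgrades this pointwise bound to $w \geq c'\tau\epsilon\, U_n$ throughout $B_{r/2}(x_0,y_0) \cap \{y>0\}$; evenness in $y$ handles $\{y<0\}$. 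A final application of \eqref{e:diadic} converts this into $u \geq U_{b+c\tau\epsilon}$ on $B_{r/2}(x_0,y_0)$. The supersolution version is symmetric.

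The main obstacle is that $u$ is only an \emph{almost} minimizer, so $w$ is $\mathcal{L}_a$-harmonic only up to a $\sigma$-error, and iterating Proposition \ref{p:bhpa} could in principle amplify this error along the Harnack chain. Following the approach of \cite{SStwo}, the remedy is to combine the boundary Harnack estimate applied to the exact $\mathcal{L}_a$-harmonic replacement of $u$ with a direct comparison against strict subsolutions from $\mathcal{V}_{c\tau\epsilon}$: the pointwise lower bound certifies the hypothesis $\zeta/(1-a) - \mathrm{tr}\, M \geq c\tau\epsilon$ for an explicit member $V \in \mathcal{V}_{c\tau\epsilon}$ corresponding to the desired shift $U_{b+c\tau\epsilon}$, and Lemma \ref{l:subsol} applied with $\mu = c\tau\epsilon$ and $\sigma \leq (\tau\epsilon)^C$ pushes $V$ strictly below $u$ on $B_{r/2}(x_0,y_0)$, absorbing the $\sigma$-error into the lift $t_0 > 0$. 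Because $\tau \geq \delta$, it suffices to take $\epsilon_0(\delta)$ small and $\sigma \leq \epsilon_0(\delta)^C$, yielding the quantitative Harnack statement of $(P1)$.
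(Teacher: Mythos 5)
Your overall strategy matches the paper's: dispatch $(P2)$ via the comparison lemmas (Lemmas \ref{l:subsol}, \ref{l:supsol}) and an $a$-harmonic replacement argument, and establish $(P1)$ by combining boundary Harnack propagation with the subsolution family $\mathcal{V}_\mu$. The treatment of $(P2)(b)$ by invoking Lemma \ref{l:subsol} with $\mu=\epsilon$ (after noting $\epsilon\le\delta^3$) is the same idea as the paper's Case 4 rescaling. The $(P2)(a)$ argument is also in the right spirit, though the inequality $|v-q|\le C\sqrt\sigma$ is not what you mean: what is small is $u-v$, with the correct exponent $\sigma^{s/(2(n+1+a))}$ rather than $\sqrt\sigma$; this does not affect the conclusion since $\sigma\le\epsilon^C$.

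There is, however, a genuine gap in your $(P1)$ argument, and the obstacle you name is not the real one. You assert that iterating Proposition \ref{p:bhpa} along a Harnack chain upgrades the bound $w(x_1,y_1)\ge c\tau\epsilon\, U_n(x_1,y_1)$ to $w\ge c'\tau\epsilon\, U_n$ \emph{throughout} $B_{r/2}(x_0,y_0)\cap\{y>0\}$. This cannot work when the ball is centered on or near $L$, because Proposition \ref{p:bhpa} requires both functions to vanish on the entire thin ball $\mathcal{B}_{2r}$; but $w=u-U_b$ does not vanish on the part of the thin space where $u>0$, so the hypotheses of the BHP fail on any ball that straddles the free boundary. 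In fact, since $u$ is assumed $a$-harmonic off the thin space (Definition \ref{almostminforus}), $w$ is \emph{exactly} $\mathcal{L}_a$-harmonic there---the almost-minimality error does not contaminate the equation, only the thin-space matching condition. So the ``error amplification along the chain'' you guard against is not the issue; the issue is purely geometric. The paper resolves it with a five-case decomposition (ball away from $\{y=0\}$; thin ball inside $P$; thin ball in $\{u>0\}$---where one passes to the $a$-harmonic replacement with the $\epsilon^2$ proximity bound; center on $L$; and the mixed case), and in the crucial center-on-$L$ case runs the Harnack chain only \emph{outside} a small tube around $L$ to get $\tilde u\ge c_0\tau$ there, then feeds this into the explicit barrier $V=V_{\epsilon I,0,2n\epsilon}$ (with $\tilde V = Q + O(\epsilon)$ for the quadratic $Q=-\tfrac12|x'|^2+\tfrac{n}{2s}(x_n^2+y^2)$) so that $u\ge V(\cdot+c_1\tau\epsilon e_n,\cdot)$ holds on an annulus, and only then invokes Lemma \ref{l:subsol}. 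Your remedy paragraph gestures at Lemma \ref{l:subsol} but does not construct this $V$ or explain how the partial bound outside the tube is converted into the annulus ordering needed to apply it; and the mixed Case 5, which requires stitching the other cases along a Harnack chain that alternates regimes, is not addressed. These are the essential steps, not bookkeeping, so the proposal as written does not close the proof of $(P1)$.
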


\begin{proof}
 We fix $\delta >0$ and split the proof into several cases. 

 \textit{Case 1}: $B_r(x_0,y_0)\cap \{y=0\} = \emptyset$. 

 We have that $d((x_0,y_0),L)>\delta$. By rescaling at the point $(x_0,0)$ we may assume that $B_{7r/8}(x_0,y_0) \cap \{|y| \leq c\}=\emptyset$. Since both $u$ and $U$ are $a$-harmonic, and since we are at uniform distance from the thin space, then vertical and horizontal translations of $U$ are comparable. Then property $(P1)$ is a simple consequence of the Harnack inequality for the operator $\mathcal{L}_a$. Property $(P2a)$ is a simple consequence of the comparison principle for $a$-harmonic functions, and $(P2b)$ is not applicable in this case.  

 \textit{Case 2}: $y_0 =0$ and $\mathcal{B}_r(x_0) \subset P$.

 Once again by rescaling, we may assume that $d(B_{7r/8}(x_0,0) , L)>c$. We have that
 $(u- U_b)(x_1,y_1)\geq \epsilon U_b(x_1,y_1).$ By the boundary Harnack principle for $a$-harmonic functions (Proposition \ref{p:bhpa}), we have 
 \[
  \epsilon \leq \frac{(u-U_b)(x_1,y_1)}{U_b(x_1,y_1)} \leq C  \frac{(u-U_b)(x,y)}{U_b(x,y)} \quad \text{ for any } (x,y) \in B_{r/2}(x_0,0).   
 \]
 Since we are a uniform distance from $L$, the vertical and horizontal shifts of $U_b$ are comparable. 
 Property $(P1)$ immediately follows. Property $(P2a)$ follows by the comparison principle for $a$-harmonic functions. 

 \textit{Case 3}: $y_0=0$ and $B_r(x_0,0) \cap P=\emptyset$. 

 As in cases 1 and 2, we may rescale and assume $d(B_{7r/8}(x_0,0) , L)>c$. If $u$ were $a$-harmonic, then the proof would be identical to case 1. Instead, we need to use the almost minimality of $u$ to compare it with the $a$-harmonic replacement $v$ which solves 
 \[
  \begin{cases}
   &\mathcal{L}_a v =0  \quad \text{ in } B_{7r/8}(x_0,0), \\
   &v=u \quad \text{ on } \partial B_{7r/8}(x_0,0). 
  \end{cases}
 \]
 By the almost minimality of $u$ (property $H2$), and also the $a$-harmonicity of $v$, we have 
 \[
  \int_{B_{7r/8}(x_0,0)} |\nabla (u-v)|^2 |y|^a \leq \sigma. 
 \]
From the weighted ($A_2$) Poincare inequality for $w=u-v$ we have 
\[
 \| w  \|_{L^2(a,B_{3r/4}(x_0,0))} \leq C r \| \nabla w \|_{L^2(a,B_{7r/8}(x_0,0)}.
\]
Assume now that $w(x_1,y_1)\geq 2\mu$ with $(x_1,y_1) \in B_{3r/4}$. From the $s$-H\"older continuity of both $u$ and $v$ we have for small $\mu$ that 
\[
 w \geq \mu \text{ on } B_{\mu^{2/s}}(x_1,y_1). 
\]
 Thus, 
 \[
  Cr^2 \sigma \geq \int_{B_{7r/8}(x_0,0)} w^2 |y|^a \geq \mu^{(n+a)2/s} \mu^2 =\mu^{(2n+1+a)/s}.
 \]
 Then 
 \[
 |u-v| \leq 2(C r^2 \sigma)^{\frac{s}{2(n+1+a)}} \leq \epsilon^2 \quad \text{ in } B_{3r/4}(x_0,0).
\]
Recall that we require $\sigma \leq \epsilon^C$ for $C$ large. 
 Since $u$ is approximated by an $a$-harmonic function up to order $\epsilon^2$, it follows from the arguments in Case 1 applied to the $a$-harmonic replacement $v$. This proves Property $(P1)$. 

 We now need to show property $(P2)$ part $a$. To do so we utilize the fact that
 \[
 \mathcal{L}_a  \delta \epsilon \frac{1}{2n(1+a)} (\delta^2 - |x-x_0|^2 - y^2) = -\delta \epsilon, 
 \]
 with zero boundary data on $\partial B_{\delta}(x_0,0)$. 
 Since $\mathcal{L}_a (v - q) \leq -\delta \epsilon$ and $v-q \geq 0$ on $\partial B_{\delta}(x_0,0)$, it follows that
 \[
  (v-q)(x_0,0) \geq \frac{\epsilon \delta^2}{2n(1+a)}. 
 \]
 Since $v$ approximates $u$ to order $\epsilon^2$, we have that $(u-q)(x_0,0)>0$. 
 
\textit{Case 4}: $(x_0,y_0) \in L$. 

We may rescale and translate to assume that $r=1$ and $(x_0,y_0)=(0,0)$. We will first prove property $(P1)$. To avoid a horizontal translation in all the notation (with corresponding vertical translations in the hodograph transform), we will assume for simplicity that $b=0$. As a consequence we have $u \geq U$ in $B_1$ and also $\tilde{u} \geq 0$ in $B_{3/4}$. From cases $(1)$-$(3)$ and utilizing a Harnack chain, we have that 
\[
u \geq U_{c_0 \tau \epsilon} \quad \text{ outside of } \ \{|x'| \leq 3/4, \ |(x_n,y)| \leq k_0\}. 
\]
With the constant $c_0$ depending on $k_0$. In terms of the hodograph transform this gives 
\begin{equation}  \label{e:tildebound}
\tilde{u} \geq c_0\tau  \quad \text{ outside of }\  \{|x'| \leq 3/4, \ |(x_n,y)| \leq k_0\}.
\end{equation}
We now consider the quadratic polynomial 
\[
Q(x,y):= -\frac{1}{2} |x'|^2 + \frac{n}{2s}(x_n^2 + y^2).
\]
We have the associated subsolution 
\[
 V:=V_{\epsilon I,0,2n\epsilon}.
\]
Utilizing Proposition \ref{p:hodograph} and recalling that $\tilde{V}$ is not $\epsilon$-normalized, we have 
\[
 \tilde{V}=Q +O(\epsilon). 
\]
From \eqref{e:tildebound} we have 
\[
\tilde{u} \geq \frac{c_0 s}{n} \tau  \quad \text{ in }\ (B_{3/4}\setminus B_{1/16}) \cap  \{|x'| \leq 3/4, \ |(x_n,y)| > k_0\}.
\]
Choosing $k_0$ small and $N$ large (both depending on $s$), we have 
\[
 2^{-N} + Q < -2^{-(N+1)} \quad \text{ in }  \ (B_{3/4}\setminus B_{1/16}) \cap  \{|x'| \leq 3/4, \ |(x_n,y)| > k_0\}.
\]
Then we obtain 
\[
\tilde{u} \geq \frac{c_0 s}{n} \tau(2^{-N}+Q) \quad \text{ in } B_{3/4} \setminus B_{1/16}. 
\]
Then $u \geq V(x+c_1 \tau \epsilon e_n,y)$ in the annulus $B_{3/4} \setminus B_{1/16}$
with $c_1 = c_0 s/(n2^{N+2})$. 
We now apply Lemma \ref{l:subsol} to $V(x+c_1 \tau \epsilon e_n,y)$ to conclude that 
\[
u \geq V(x+c_1 \tau \epsilon e_n, y) \quad \text{ in } B_{3/4}. 
\]
This proves property $(P1)$. 

To prove property $(P2b)$ we assume by contradiction that $(x_0,y_0)=0$ and that $u\geq V$ in $B_{\delta}$ with $V$ satisfying the hypotheses of $(P2)$ part $(b)$. We rescale with $u_{\delta}=\delta^{-s} u(\delta x)$  and $\overline{V}= \delta^{-s} V(\delta x)$. Then $\overline{V}$ has coefficients $\overline{a}=\delta a, \overline{M}=\delta M, \overline{\xi}=\xi, \overline{\sigma}=\delta^{n-1}\sigma$. Then $\overline{V} \in \mathcal{V}_{\delta^{-1}\epsilon}$ and is tangent to  $\overline{u}$ by below at the origin. This contradicts Lemma \ref{l:subsol}.  

\textit{Case 5: $y_0 \neq 0$ and $(B_r(x_0,y_0)\cap\{y=0\})\neq \emptyset$}

Let $r_1$ denote the radius of the $n$-dimensional ball realized as the intersection of $B_r(x_0,y_0)$ with the thin space. That is, 
\[
 \{|(x_0,0) - (x,0)| < r_1\} = B_r(x_0,y_0)\cap\{y=0\}. 
\]
If $B_{r_1}(x_0,0) \cap B_{r/2}(x_0,y_0) = \emptyset$, then $B_{r/2}(x_0,y_0)$ is far enough from the thin space to be treated exactly as in Case 1. 

Now assume $B_{r_1}(x_0,0) \cap B_{r/2}(x_0,y_0) \neq \emptyset$. If $(x_1,y_1) \in B_{3r_1/4}(x_0,0)$, then we can utilize Cases (2-4), and then a Harnack chain combined with ideas from Case 1 to conclude the result. If $(x_1,y_1) \notin B_{3r_1/4}(x_0,0)$, then we can utilize the ideas from Case 1, and then a Harnack chain and cases (2-4) to conclude the result.  
\end{proof}

\section{Free Boundary Regularity}\label{S:freebdryreg}

In this section we prove our final main result, a flatness implies $C^{1,\gamma}$ regularity theorem, see Theorem \ref{t:reg}. To begin with, we prove a compactness result that tells us that as $\epsilon_k\rightarrow 0$, the functions obtained thorough the normalized hodograph transform converge to a solution of \eqref{e:linearized}.

\begin{lemma}  \label{l:trapped} 
 Assume that $u_k$ satisfies $(H1)$-$(H3)$ and that 
 \[
  U(x-\epsilon_k e_n, y) \leq u_k \leq U(x+\epsilon_k e_n, y) \quad \text{ in } B_1. 
 \]
 If $\epsilon_k \to 0$, and $\tilde{u}_k$ is the multi-valued function defined by 
 \[
   U(x,y)=u_k(x- \epsilon_k \tilde{u}_k e_n, y),   
 \]
 then $\tilde{u}_k$ has a convergent subsequence which converges uniformly to a limiting function $h$ solving \eqref{e:linearized}. 
\end{lemma}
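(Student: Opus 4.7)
The argument follows the standard compactness-to-viscosity scheme whose enablers are properties $(P1)$ and $(P2)$ of Theorem \ref{t:p1p2}. First, the $\epsilon_k$-flatness gives $|\tilde u_k|\leq 1$ on $B_{1-\epsilon_k}\setminus P$ since $U$ is strictly monotone in $e_n$ off $P$. Iterating $(P1)$ in the classical oscillation-decay style yields, on every compact $K\Subset B_1$, a uniform H\"older modulus for $\{\tilde u_k\}$: whenever $\mathrm{osc}_{B_r}\tilde u_k=M$ at a point with a Harnack chain available, one of the positive or negative deviations from the mid-value exceeds $M/2$ at an interior point, and $(P1)$ (or its symmetric analogue with $\leq$ and $-\tau$) produces a universal oscillation drop on $B_{r/2}$. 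Arzel\`a--Ascoli then extracts a subsequence converging locally uniformly to $h\in C(B_1)$, even in $y$, with $|h|\leq 1$.

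Next I verify that $h$ is a viscosity solution of \eqref{e:linearized} in the sense of Definition \ref{d:eq}, which is where $(P2)(a)$ and $(P2)(b)$ enter. For the interior PDE $\mathrm{div}(|y|^a\nabla(U_n h))=0$ on $B_1\setminus P$, suppose a smooth $\phi$ touches $h$ strictly from below at $(x_0,y_0)\notin P$ with $\mathcal{L}_a(U_n\phi)\geq\delta_0>0$ on a neighborhood. Uniform convergence yields $c_k\to 0$ such that $\phi-c_k$ touches $\tilde u_k$ from below at nearby points. By construction of the hodograph transform, the function $q_k$ defined by $U(x,y)=q_k(x-\epsilon_k(\phi(x,y)-c_k)e_n,y)$ then touches $u_k$ from below at the corresponding physical point, and Proposition \ref{p:epsilonh} gives $\mathcal{L}_a q_k=\epsilon_k\mathcal{L}_a(U_n\phi)+O(\epsilon_k^2)\ge \tfrac{1}{2}\delta_0\epsilon_k$ for $k$ large, contradicting $(P2)(a)$. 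Touching from above is symmetric.

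The main obstacle is the boundary condition on $L$. Given a test $\phi$ with expansion $\phi(x_0',0,0)+a_1\cdot(x'-x_0')+a_2 r+O(|x'-x_0'|^2+r^{1+\gamma})$ and $a_2>0$ touching $h$ from below at $(x_0',0,0)\in L$, the strategy is to construct a function $V=V_{M,\xi',\zeta}\in\mathcal{V}_{\delta^{-1}\epsilon_k}$ with $\zeta/(1-a)-\mathrm{tr}\,M\geq\epsilon_k$ that touches $u_k$ from below, contradicting $(P2)(b)$. By Proposition \ref{p:hodograph} the $\epsilon_k$-normalized hodograph of $V$ equals $\epsilon_k^{-1}\gamma_V+O(\epsilon_k)$ with $\gamma_V=-\xi'\cdot x'-\tfrac{1}{2}(x')^TMx'+\tfrac{\zeta}{4s}r$. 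I match the linear and $r$-coefficients via $\xi'=-a_1\epsilon_k$ and $\zeta=4sa_2\epsilon_k$, and choose a suitable positive-definite $M=K\epsilon_k I$ so that the quadratic in $\gamma_V/\epsilon_k$ dominates the $O(|x'-x_0'|^2)$ remainder of $\phi$ after a preliminary perturbation of $\phi$ by a small quadratic that strictifies the touching. The strict positivity $a_2>0$ leaves room to keep $4sa_2/(1-a)-nK\geq 1$ while $V$ sits inside $\mathcal{V}_{\delta^{-1}\epsilon_k}$, and the identification $u_k\geq V\Leftrightarrow\tilde u_k\geq\tilde V$ transfers the touching to the physical variables after an $e_n$-translation of order $\epsilon_k$ used to restore tangential contact. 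The symmetric case $a_2<0$ uses the supersolution analogue of $(P2)(b)$.

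The delicate bookkeeping in the last step is the real heart of the argument: one must simultaneously arrange (i) $V\in\mathcal{V}_{\delta^{-1}\epsilon_k}$, (ii) $\tilde V\leq \phi$ in a fixed neighborhood of $(x_0',0,0)$, and (iii) the strict subsolution inequality $\zeta/(1-a)-\mathrm{tr}\,M\geq\epsilon_k$. The strict sign of $a_2$, together with the freedom to tune $K$, the threshold $\delta$, and a small $e_n$-translation of $V$, provide just enough degrees of freedom, in close parallel with the $s=1/2$ argument in \cite{SStwo}.
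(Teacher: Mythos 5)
Your compactness step (uniform H\"older estimate via iteration of $(P1)$, followed by Arzel\`a--Ascoli) and your interior viscosity argument (touching $h$ with a smooth $\phi$, pulling back via the hodograph transform, applying Proposition~\ref{p:epsilonh} and invoking $(P2)(a)$) track the paper's proof closely and are sound. The gap is in the argument for the Neumann boundary condition on $L$.

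Your expression $\gamma_V = -\xi' \cdot x' - \tfrac{1}{2}(x')^T M x' + \tfrac{\zeta}{4s}\, r$ mis-transcribes Proposition~\ref{p:hodograph}, which reads $\gamma_V = -\xi' \cdot x' - \tfrac{1}{2}(x')^T M x' + \tfrac{\zeta}{4s}(x_n^2 + y^2)$. Since $r$ is the distance to $L$, i.e.\ $r^2 = x_n^2 + y^2$, the last term is $\tfrac{\zeta}{4s}\, r^2$, \emph{quadratic} in $r$ (this is also what makes $Q(x,y) = \xi'\cdot x' - \tfrac{\alpha}{2}|x'|^2 + n\alpha r^2$ a genuine quadratic polynomial in the paper's argument). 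Consequently your choice $\zeta = 4 s a_2 \epsilon_k$ does \emph{not} match the linear $a_2 r$ term of $\phi$; it produces $a_2 r^2$. In fact no choice of $(M,\xi',\zeta)$ produces a linear $r$-term in $\gamma_V/\epsilon_k$, so the plan of ``matching the $r$-coefficient and then using $-\tfrac{K}{2}|x'-x_0'|^2$ to dominate the remainder'' cannot close: along $x' = x_0'$ the remainder $O(r^{1+\gamma})$ of $\phi$ has no quadratic counterpart to absorb it, since $r^2 \ll r^{1+\gamma}$ when $\gamma<1$. And even under your reading of $\gamma_V$, the strict subsolution inequality $\zeta/(1-a) - \mathrm{tr}\,M \geq \epsilon_k$ becomes $2a_2 - (n-1)K \geq 1$ (note $\mathrm{tr}(K\epsilon_k I)=(n-1)K\epsilon_k$ since $M$ acts on $x'\in\mathbb{R}^{n-1}$); this has no positive solution $K$ when $a_2 \leq 1/2$, so ``the strict positivity $a_2>0$ leaves room'' is not correct.

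The paper's construction is structurally different and is built precisely to exploit the sign of $a_2$: it takes $Q(x,y) = \xi'\cdot x' - \tfrac{\alpha}{2}|x'|^2 + n\alpha r^2$ with a \emph{free} scalar $\alpha \geq 1$ and \emph{no} linear $r$-term. The parameter $\alpha$ is chosen large so that $-\tfrac{\alpha}{2}|x'|^2$ dominates the $O(|x'-x_0'|^2)$ remainder of $\phi$; the $r$-direction is handled not by matching but by the slack $a_2 r - n\alpha r^2 - Cr^{1+\gamma} > 0$ for $r$ small, which is available exactly because $a_2>0$ and $r^2, r^{1+\gamma} = o(r)$. The associated coefficients are then $M = \alpha\epsilon I$, $\xi'_V = -\epsilon\xi'$, $\zeta = 4sn\alpha\epsilon$ (the signs printed in the paper for this $V$ appear to be typos; compare Case~4 of Theorem~\ref{t:p1p2}), for which $\zeta/(1-a) - \mathrm{tr}\,M = 2n\alpha\epsilon - (n-1)\alpha\epsilon = (n+1)\alpha\epsilon \geq \epsilon$ holds automatically for $\alpha\geq 1$, entirely independently of $a_2$. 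In short: the role of the strict inequality $a_2>0$ in Definition~\ref{d:eq} is to provide vertical room for a quadratic comparison function, not a coefficient to be matched; decoupling the second-order coefficients of $Q$ from $a_2$ is what makes the subsolution inequality and the fit-below-$\phi$ requirement simultaneously satisfiable.
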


\begin{proof}
 We let $\epsilon_k \leq \epsilon_0(2^{-k})$ with $\epsilon_0(\delta)$ as given in properties $(P1)$ and $(P2)$. Then necessarily $\sigma_k \to 0$ as well. Property $(P1)$ ensures that $\tilde{u}_k$ satisfies the Harnack inequality in balls of size $r$ included in $B_1$ with $2^{-k}\leq r \leq 1$. Then $\tilde{u}_k$ all have the same uniform H\"older bound, and we can therefore extract a convergent subsequence going to a limiting function $h$ which is H\"older continuous in $B_1$. 

 We now show that $h$ is a solution to \eqref{e:linearized}. We first show that $U_n h$ is $a$-harmonic for any $(x_0,y_0) \in B_1 \setminus P$. Since $U_n h$ is even, it suffices to check in a viscosity sense by touching above and below with quadratic polynomials. Suppose that $Q$ is a quadratic polynomial that touches $U_n h$ strictly by below at a point $(x_0,y_0) \in B_1 \setminus P$, and that $\mathcal{L}_a Q >0$. Since $\tilde{u}_k$ converges uniformly, we have that a translation of $Q/U_n$ touches $\tilde{u}_k$ by below near $(x_0,y_0)$. From $(3.7)$ in \cite{SSS} we have that $q_k$ defined by 
 \[
 U(x,y)=q_k\left(x- \epsilon_k \frac{Q}{U_n}e_n,y \right)
 \]
 touches $u$ by below at a point $(x_k,y_k)$ and is below $u_k$ in a fixed neighborhood of $(x_0,y_0)$. By Proposition \ref{p:epsilonh} we have that 
 \[
  q_k \in C^2(B_{\delta}(x_k,y_k)), \quad \|D^2 q_k \| \leq \delta^{-1}, \quad 
  \mathcal{L}_a q_k = \epsilon_k \mathcal{L}_a Q + O(\epsilon_k^2) \geq \delta \epsilon_k,
 \]
 for a fixed $\delta$ depending only on $(x_0,y_0)$ and $Q$. By property $(P2)$ part $a$ we obtain a contradiction. 

 Finally, we now show that $h$ satisfies the zero-Neumann boundary condition. Suppose by contradiction that there exists $\phi$ which touches $h$ by below with $\phi(0)=0$ and such that 
 \[
  \phi(x,y)=\xi' \cdot x' + a_2 r+ O(|x'|^2 + r^{1+\gamma})
 \]
 for some $\gamma>0$ and with $a_2>0$. Then we can find a constant $\alpha \geq 1$ large and depending on $\phi$ such that the quadratic polynomial 
 \[
  Q(x,y)=\xi' \cdot x' -\frac{\alpha}{2} |x'|^2 + n \alpha r^2
 \]
 touches $h$ strictly by below at $0$ in $B_{2\delta}$ for some sufficiently small $\delta$. We let
 \[
 V:=V_{-\epsilon \alpha I, \epsilon \xi', -4sn\epsilon \alpha} \in V_{\delta^{-1}\epsilon},
 \]
 which satisfies the conditions of property $(P2)$ part $(b)$. By Proposition \ref{p:hodograph} the $\epsilon$-rescaling of the hodograph transform of $V$ satisfies 
 \[
 \tilde{V}=Q + O(\epsilon).
 \]
 Since $\tilde{u}_k$ converges uniformly to $h$, we obtain that a translation of $\tilde{V}$ touches $\tilde{u}_k$ by below. Consequently, $V$ touches $u_k$ by below at some point $(x_k,y_k) \to 0$ and $u_k$ is above this translation in $B_{\delta}$. This contradicts property $(P2)$ part $b$. 
\end{proof}

This next proposition gives an estimate for the solution $h$ of the limiting equation. 
\begin{proposition} \label{p:limitreg}
    Assume $h$ solves \eqref{e:linearized} in $B_1$ with $h(0)=0$ and $|h|\leq 1$. There exists a universal constant $\gamma_0>0$, such that if $\gamma \in (0,\gamma_0)$, then there exists a constant $\rho=\rho(\gamma)$ such that   
    \begin{equation} \label{e:limitreg}
     |h(x,y)-(h(0,0) + \xi' \cdot x')|\leq \frac{1}{4}\rho^{1+\gamma} \quad \text{ in } B_{2\rho}. 
    \end{equation}
    for some $\xi' \in \mathbb{R}^{n-1} \times \{0\} \times \{0\}$. 
\end{proposition}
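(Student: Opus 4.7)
The proposition is a $C^{1,\gamma}$ estimate at the origin for the linearized problem \eqref{e:linearized}, with the structural refinement that the tangent plane can only involve $x'$ (no $x_n$ or $r$ component). This refinement is built into the viscosity condition in Definition \ref{d:eq}, which explicitly excludes test functions whose first-order behavior at $L$ contains a nontrivial $a_2 r$ term. My plan is the standard compactness-contradiction scheme, closed by a Liouville classification of entire solutions of \eqref{e:linearized} with $(1+\gamma)$-subquadratic growth.

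I would first record that the class of solutions of \eqref{e:linearized} with $\|h\|_{L^\infty}\leq 1$ is closed under locally uniform convergence: interior stability for the $A_2$-weighted operator $\mathrm{div}(|y|^a\nabla\,\cdot\,)$ in $B_1\setminus P$, plus stability of the viscosity Neumann condition in Definition \ref{d:eq}, together with interior $C^{\alpha}$ estimates away from $P$ and a uniform modulus of continuity up to $L$ from the even symmetry, make uniformly bounded families precompact. Now fix $\gamma\in(0,\gamma_0)$ with $\gamma_0$ to be chosen, and assume the conclusion fails. Then there exist solutions $h_k$ with $\|h_k\|_{L^\infty(B_1)}\leq 1$, $h_k(0)=0$, radii $\rho_k\to 0$, and best $L^2(B_{2\rho_k})$-tangential-affine approximations $L_k(x')=c_k+\xi'_k\cdot x'$ such that
\[
M_k := \rho_k^{-(1+\gamma)}\,\|h_k-L_k\|_{L^\infty(B_{2\rho_k})} > 1/4.
\]
Set $\tilde h_k(x,y) := M_k^{-1}\rho_k^{-(1+\gamma)}(h_k(\rho_k x,\rho_k y)-L_k(\rho_k x'))$. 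Since constants and tangential affine functions in $x'$ solve \eqref{e:linearized}, and since the equation is invariant under the dilation $(x,y)\mapsto(\rho_k x,\rho_k y)$, each $\tilde h_k$ solves \eqref{e:linearized} on $B_{1/\rho_k}$. Moreover $\tilde h_k(0)=0$, $\|\tilde h_k\|_{L^\infty(B_2)}=1$, and $\tilde h_k$ is $L^2(B_2)$-orthogonal to $\{1,x_1,\ldots,x_{n-1}\}$. A standard dyadic iteration of the failure of \eqref{e:limitreg} at the scales $2^j\rho_k$ produces the growth estimate $\|\tilde h_k\|_{L^\infty(B_R)}\leq C R^{1+\gamma}$ for every $R\leq 1/\rho_k$ (the geometric series converges because $\gamma<\gamma_0<1$). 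Extracting a subsequence yields an entire solution $\tilde h_\infty$ of \eqref{e:linearized} on $\mathbb{R}^{n+1}$ with the same vanishing, orthogonality, and growth bound.

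The closing step is a Liouville theorem: for $\gamma_0$ small enough, every entire solution $w$ of \eqref{e:linearized} with $|w(x,y)|\leq C(1+|(x,y)|)^{1+\gamma}$ must be of the form $a+\xi'\cdot x'$. Combined with $\tilde h_\infty(0)=0$ and the orthogonality this forces $\tilde h_\infty\equiv 0$, contradicting $\|\tilde h_\infty\|_{L^\infty(B_2)}=1$. This Liouville step is the main technical obstacle. The expected route is via a partial Fourier decomposition in the tangential variable $x'$: each mode solves a two-dimensional problem in $(x_n,y)$ for $\mathrm{div}(|y|^a\nabla(U_n\,\cdot\,))$ with the viscosity Neumann condition on the ray $\{x_n=0,\,y=0\}$, whose homogeneous separated solutions $\rho^\beta\Phi(\theta)$ can be classified as in \cite{SSS,SStwo}. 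The Neumann condition rules out every angular profile with a nontrivial $r$-linear behavior at $L$, leaving $\beta=0$ and $\beta=1$ (tangential-affine in $x'$) as the only admissible exponents below $1+\gamma_0$, while nonzero tangential frequencies would contradict polynomial growth. The value of $\gamma_0$ is then dictated by the gap to the next admissible exponent in this spectrum.
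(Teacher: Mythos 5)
Your route is genuinely different from the paper's: the paper simply quotes Theorem~6.1 of \cite{SSS}, which already gives the pointwise $C^{1,\gamma}$ estimate
\[
|h(x,y)-(h(0,0)+\xi'\cdot x')|\leq C|(x,y)|^{1+\gamma}\quad\text{in }B_{1/2},
\]
and then observes that for any $\gamma_1<\gamma$ one can pick $\rho$ small so that $C(2\rho)^{1+\gamma}\leq\tfrac14\rho^{1+\gamma_1}$. In other words, Proposition~\ref{p:limitreg} is a one-line reduction from a cited regularity theorem, trading the constant for a slightly worse exponent. You are attempting to reprove the content of that cited theorem from scratch by a blow-up/Liouville scheme. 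That is a legitimate alternative \emph{in principle}, but as written there are real gaps.

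The central gap is the growth estimate. You fix $\rho_k\to 0$, take the best affine approximation $L_k$ on $B_{2\rho_k}$, set $M_k:=\rho_k^{-(1+\gamma)}\|h_k-L_k\|_{L^\infty(B_{2\rho_k})}>\tfrac14$, and define $\tilde h_k$ by rescaling at scale $\rho_k$ and dividing by $M_k\rho_k^{1+\gamma}$. You then assert that ``a standard dyadic iteration of the failure of \eqref{e:limitreg} at the scales $2^j\rho_k$'' yields $\|\tilde h_k\|_{L^\infty(B_R)}\leq CR^{1+\gamma}$ for $R\leq 1/\rho_k$. This does not follow. The failure of \eqref{e:limitreg} at a scale gives a \emph{lower} bound on the approximation error there, whereas the growth estimate requires a matching \emph{upper} bound at every intermediate dyadic scale $2^j\rho_k$, of the same order $M_k(2^j\rho_k)^{1+\gamma}$. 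With an arbitrary $\rho_k\to 0$ you have no control of this kind; $M_k$ can be much smaller than the normalized error at the intermediate scales, in which case $\tilde h_k$ has no uniform polynomial growth bound and no entire limit can be extracted. The standard fix is the Simon-type ``maximal bad scale'' device: define $\Theta_k(\rho):=\sup_{r\in[\rho,1]}\inf_{\ell}r^{-(1+\gamma)}\|h_k-\ell\|_{L^\infty(B_r)}$, show $\Theta_k(\rho_k)\to\infty$ under the contradiction hypothesis, and pick $\rho_k^*\geq\rho_k$ with $\inf_\ell(\rho_k^*)^{-(1+\gamma)}\|h_k-\ell\|_{L^\infty(B_{\rho_k^*})}\geq\tfrac12\Theta_k(\rho_k)$; only after rescaling at $\rho_k^*$ does the dyadic telescoping of the affine approximants yield the needed $R^{1+\gamma}$ growth. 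This is precisely the step your sketch skips.

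Two further points are only sketched and would need real work. First, the compactness claim that bounded viscosity solutions of \eqref{e:linearized} are precompact requires a uniform modulus of continuity up to $L$; ``even symmetry'' handles the reflection across $\{y=0\}$, but H\"older estimates up to the co-dimension-two set $L$ hinge on the Neumann condition and a boundary Harnack argument, which is nontrivial and should not be waved through. Second, the Liouville classification for entire subquadratic solutions of \eqref{e:linearized} is exactly the hard part of Theorem~6.1 in \cite{SSS}; your paragraph identifies the right ingredients (separation of variables, spectral gap in the angular operator, exclusion of the $r$-linear mode by the Neumann condition) but does not carry them out. Unless you are content to cite \cite{SSS} for that Liouville theorem and the up-to-$L$ estimates --- in which case the blow-up scaffolding is unnecessary and the paper's short reduction is strictly more efficient --- the argument is not closed.
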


\begin{proof}
    The result is a consequence of Theorem 6.1 in \cite{SSS}, where the estimate 
    \[
     |h(x,y)-(h(0,0) + \xi' \cdot x')|\leq C |(x,y)|^{1+\gamma} \quad \text{ in } B_{1/2},
    \]
    is proved for universal constants $C$ and $\gamma$. If $\gamma_1 < \gamma$, then we can choose $\rho$
    small enough such that 
    \[
     C(2\rho)^{1+\gamma} \leq \frac{1}{4}\rho^{1+\gamma_1},
    \]
    and the conclusion follows. 
\end{proof}

This next Lemma will transfer properties of $h$ to $u$. 
\begin{lemma} \label{l:approximate}
 Let $u$ satisfy $(H1)-(H3)$. Assume that $0 \in F(u)$ and 
 \begin{equation} \label{e:trap3}
  U(x-\epsilon e_n, y) \leq u \leq U(x+ \epsilon e_n, y). 
 \end{equation}
 Then there exists a $\gamma \in  (0,1)$ depending on $n,\beta, s$ such that if $\sigma \leq \epsilon^C$ for some large constant $C=C(\beta,n,s)$, then 
 \begin{equation} \label{e:trap2}
 U(x\cdot \nu - \epsilon \rho^{1+\gamma}) \leq u \leq  U(x\cdot \nu + \epsilon \rho^{1+\gamma}) \quad \text{ in } B_{\rho},
 \end{equation}
 for some direction $\nu \in \partial B'_1$, provided that $\epsilon \leq \epsilon_0(\gamma,n,s,\beta)$. 
\end{lemma}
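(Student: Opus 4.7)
The plan is to argue by contradiction using the compactness result (Lemma \ref{l:trapped}) and the regularity of solutions of the linearized equation (Proposition \ref{p:limitreg}). Suppose the conclusion fails for a given $\gamma \in (0, \gamma_0)$ with $\gamma_0$ as in Proposition \ref{p:limitreg}. Then we obtain a sequence $u_k$ of almost minimizers satisfying $(H1)-(H3)$ with $0 \in F(u_k)$, with gauge exponents $\sigma_k \leq \epsilon_k^C$ and $\epsilon_k \to 0$, such that \eqref{e:trap3} holds with $\epsilon_k$, but \eqref{e:trap2} fails for every unit direction $\nu \in \partial B'_1$ at scale $\rho = \rho(\gamma)$. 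Let $\tilde{u}_k$ be the $\epsilon_k$-normalized Hodograph transforms defined by $U(x,y) = u_k(x - \epsilon_k \tilde{u}_k e_n, y)$, which satisfy $|\tilde{u}_k| \leq 1$ in $B_{1 - \epsilon_k}$.

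By Lemma \ref{l:trapped}, up to a subsequence $\tilde{u}_k \to h$ uniformly on compact subsets of $B_1$, where $h$ solves the linearized equation \eqref{e:linearized}. The assumption $0 \in F(u_k)$, combined with the uniform convergence and the definition of $\tilde{u}_k$, forces $h(0,0) = 0$: indeed $F(u_k)$ lies in an $O(\epsilon_k)$-neighborhood of $P$, and at the origin the relation $U(0,0) = 0 = u_k(-\epsilon_k \tilde{u}_k(0,0) e_n, 0)$ together with the uniform $C^{0,s}$ bound pins $\tilde{u}_k(0,0)$ to $0$ in the limit. Proposition \ref{p:limitreg} then gives $\xi' \in \mathbb{R}^{n-1} \times \{0\} \times \{0\}$ with
\[
|h(x,y) - \xi' \cdot x'| \leq \tfrac{1}{4}\rho^{1+\gamma} \quad \text{in } B_{2\rho}.
\]
For $k$ large, uniform convergence yields $|\tilde{u}_k(x,y) - \xi' \cdot x'| \leq \tfrac{1}{2}\rho^{1+\gamma}$ in $B_{2\rho}$, and since $U$ is monotone in the $e_n$ direction, this translates into
\[
U(x - \epsilon_k \xi' \cdot x'\, e_n - \tfrac{1}{2}\epsilon_k \rho^{1+\gamma} e_n, y)
\leq u_k \leq
U(x - \epsilon_k \xi' \cdot x'\, e_n + \tfrac{1}{2}\epsilon_k \rho^{1+\gamma} e_n, y).
\]

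The remaining step is to absorb the linear tilt $\epsilon_k \xi' \cdot x' \, e_n$ into a genuine rotation of coordinates. Setting $\nu_k := (-\epsilon_k \xi', 1)/\sqrt{1+\epsilon_k^2|\xi'|^2}$, one checks that the affine function $x \cdot \nu_k$ and $x_n - \epsilon_k \xi' \cdot x'$ agree up to a quadratic error $O(\epsilon_k^2 |x'|^2) \leq O(\epsilon_k^2 \rho^2)$, which is absorbed into the right-hand side since $\sigma_k \leq \epsilon_k^C$ and in particular $\epsilon_k \rho \ll \rho^{1+\gamma}$ for small enough $\epsilon_k$. Using the monotonicity of $U$ in the normal direction and its regularity (via \eqref{e:diadic}) to convert this translation error into a further shift bounded by $\tfrac{1}{4}\epsilon_k \rho^{1+\gamma}$, we conclude that \eqref{e:trap2} holds with $\nu = \nu_k$ for $k$ sufficiently large, contradicting the choice of the sequence.

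The main obstacle is the last step: converting the additive correction $\xi' \cdot x'$ inside $\tilde{u}_k$ into a bona fide rotation of $U$, while controlling the quadratic remainder uniformly in $\rho$ and $\epsilon_k$. This is exactly where one needs $\gamma < 1$ strict, together with the derivative estimate \eqref{e:diadic} for $U_\tau$, in order to dominate the $O(\epsilon_k^2 \rho^2)$ tilting error by the improvement $\epsilon_k \rho^{1+\gamma}$; the choice $\epsilon_0 = \epsilon_0(\gamma, n, s)$ is dictated by this comparison. The assumption $\sigma \leq \epsilon^C$ is required only to make the compactness step (Lemma \ref{l:trapped}) applicable with the viscosity properties $(P1)$–$(P2)$ available in Theorem \ref{t:p1p2}.
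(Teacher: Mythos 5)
Your proposal is correct and follows essentially the same route as the paper: a compactness-contradiction argument via Lemma \ref{l:trapped} (Hodograph transforms $\tilde{u}_k$ converge to a solution $h$ of the linearized equation), then Proposition \ref{p:limitreg} to get the linear approximation $\xi'\cdot x'$ of $h$ near the origin, and finally transferring the vertical $\rho^{1+\gamma}$-bound in Hodograph variables into a horizontal $\epsilon_k\rho^{1+\gamma}$-shift of $U$ in a slightly rotated direction. The one place where you elaborate beyond the paper is the final step: the paper simply cites the argument of Lemma 7.2 in \cite{DR12} for converting the tilted Hodograph bound into \eqref{e:trap2}, while you carry out the normalization of $(-\epsilon_k\xi',1)$ to a unit direction $\nu_k$ and estimate the resulting error by hand; your stated error order $O(\epsilon_k^2|x'|^2)$ should really be $O(\epsilon_k^2|\xi'|^2\rho)$ (coming from $\lvert v_k\rvert-1\approx\tfrac12\epsilon_k^2|\xi'|^2$ multiplying $x\cdot\nu_k$), but this is still absorbed by the improvement term $\epsilon_k\rho^{1+\gamma}$ for $\epsilon_k$ small at fixed $\rho=\rho(\gamma)$, so the conclusion stands.
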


\begin{proof}
  Suppose by contradiction that the Lemma is not true. Then there exists a sequence $u_k$ satisfying $(H1)-(H3)$ as well as \eqref{e:trap3} with $\epsilon_k \to 0$ and $\sigma \leq \epsilon_k^C$. Also, $0 \in F(u_k)$; however, by contradiction we assume that no such $\nu$ exists so that \eqref{e:trap2} holds. Now from Lemma \ref{l:trapped} we have that 
  $\tilde{u}_k$ converges uniformly to $h$ a solution of \eqref{e:hodograph}. From Proposition \ref{p:limitreg} we have that $h$ satisfies \eqref{e:limitreg}. Since $0 \in F(u_k)$ for all $k$ we have that $\tilde{u}_k(0)=0$, so that $h(0)=0$ . Since $\tilde{u}_k$ converges uniformly to $h$ we have that for $k$ large enough
  \[
  |\tilde{u}_k(x,y)-  \xi' \cdot x'|\leq \frac{1}{4}\rho^{1+\gamma} \quad \text{ in } B_{2\rho}. 
  \]
  From the same arguments in Lemma 7.2 in \cite{DR12} we can relate the vertical bounds in the hodograph to a horizontal shift in the original variables: 
  \[
   U(x'\cdot \xi' - \epsilon_k \frac{1}{2}\rho^{1+\gamma},y) 
   \leq 
   u(x,y) 
   \leq 
   U(x'\cdot \xi' + \epsilon_k \frac{1}{2}\rho^{1+\gamma},y) 
   \quad \text{ in } B_{\rho}. 
  \]
  We then arrive at a contradiction that no such $\nu$ exists since $\xi=\nu$ will suffice.  
\end{proof}

We now conclude the regularity of the free boundary at flat points. 
\begin{theorem} \label{t:reg}
 Let $u$ be an almost minimizer to $J$ in $B_1$ with constant $\kappa$ and exponent $\beta$. Assume that $\|u\|_{C^{0,1/2}(B_1)} \leq C$. Assume that $|u-U|\leq \tau_0$ in $B_1$. If $\tau_0$ and $\kappa$ are small enough depending on $\beta,n,s$, then $F(u)$ is $C^{1,\gamma_0}$ in $B_{1/2}$ for some $\gamma_0>0$ and depending on $n,\beta,s$. 
\end{theorem}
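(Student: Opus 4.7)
The plan is the standard iteration of the improvement-of-flatness lemma. Fix a free boundary point $x_0 \in F(u) \cap B_{1/2}$ (by translation, assume $x_0=0$). Let $\rho \in (0,1)$ and $\gamma \in (0,1)$ be the constants produced by Proposition \ref{p:limitreg} and Lemma \ref{l:approximate}. I will construct by induction a sequence of unit vectors $\nu_k \in \partial B_1' \subset \mathbb{R}^{n-1}\times\{0\}\times\{0\}$ and a sequence of flatness levels $\epsilon_k = \epsilon_0 \rho^{k\gamma}$ such that, after the rotation $R_k$ sending $e_n \mapsto \nu_k$, the rescaled function
\[
u_k(x,y) := \frac{u(\rho^k R_k x, \rho^k y)}{\rho^{ks}}
\]
satisfies hypotheses $(H1)$–$(H3)$ on $B_1$ together with the flatness bound
\[
U(x\cdot e_n - \epsilon_k,y) \le u_k(x,y) \le U(x\cdot e_n + \epsilon_k,y) \quad \text{in } B_1.
\]
The base case $k=0$ is the smallness assumption $|u-U|\le \tau_0$ (with $\tau_0$ small enough to imply an $\epsilon_0$-flatness of the required form, using optimal regularity and nondegeneracy to convert an $L^\infty$ bound into a horizontal flatness bound). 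The inductive step is exactly Lemma \ref{l:approximate} applied to $u_k$, which produces the next direction $\nu_{k+1}$ with
\[
|\nu_{k+1} - \nu_k| \le C \epsilon_k,
\]
and gives the flatness \eqref{e:trap2} at scale $\rho$, which rescales precisely to the $\epsilon_{k+1}$-flatness for $u_{k+1}$.

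The key bookkeeping issue, and where the choice of $\gamma$ is forced, is that Lemma \ref{l:approximate} requires $\sigma \le \epsilon^{C}$ for some large $C=C(n,s,\beta)$. By Proposition \ref{p:rescale}, at step $k$ the rescaled function $u_k$ is an almost minimizer with parameter $\kappa \rho^{k\beta}$, and $(H2)$ holds with $\sigma_k$ of order $\kappa \rho^{k\beta}$. So the admissibility of the inductive step reduces to
\[
\kappa \rho^{k\beta} \le \epsilon_0^{C} \rho^{kC\gamma} \quad \text{for all } k \ge 0.
\]
I choose $\gamma_0 := \gamma$ with $C\gamma \le \beta/2$ (so that the exponent in $\rho$ on the right-hand side is at most half of the one on the left), and then require $\kappa$ and $\tau_0$ (hence $\epsilon_0$) small enough depending on $\beta,n,s$ so that the inequality holds uniformly in $k$. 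Properties $(H1)$ and $(H3)$ are preserved under the rescaling $u\mapsto u(\rho^k \cdot)/\rho^{ks}$ with the same universal constants, so the induction closes.

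Once the sequence $\{\nu_k\}$ is built, the geometric decay $|\nu_{k+1}-\nu_k| \le C\rho^{k\gamma}$ gives a limiting direction $\nu_\infty(0) := \lim \nu_k$ with $|\nu_k - \nu_\infty(0)| \le C\rho^{k\gamma}$. Translating the same argument to every nearby free boundary point $x_0' \in F(u)\cap B_{1/2}$ (using Theorem \ref{t:sep1} and Theorem \ref{t:sep2} to restrict to the one-phase setting, together with the flatness assumption which persists in a neighborhood for $\tau_0$ small), one obtains a normal $\nu_\infty(x_0')$ at every free boundary point. Comparing the iterations centered at two different free boundary points $x_0',x_0''$ at the first scale $\rho^k$ with $\rho^k \sim |x_0'-x_0''|$ gives
\[
|\nu_\infty(x_0') - \nu_\infty(x_0'')| \le C\,|x_0'-x_0''|^{\gamma_0},
\]
which is precisely the $C^{1,\gamma_0}$ regularity of $F(u)$ in $B_{1/2}$.

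The main obstacle is really a compatibility one: the almost-minimality error $\sigma$ shrinks only like $\rho^{k\beta}$ while the flatness shrinks like $\rho^{k\gamma}$, and Lemma \ref{l:approximate} needs $\sigma \le \epsilon^C$ with a large $C$. Forcing $\gamma$ to be small enough relative to $\beta/C$ is precisely what makes the iteration close, and this is why $\gamma_0$ depends on $\beta$ as well as on $n,s$. Everything else (preservation of $(H1)$–$(H3)$ under rescaling, conversion of $L^\infty$-flatness to $U$-flatness using optimal regularity and nondegeneracy, and summation of direction rotations) is routine given the tools already established in Sections \ref{S:optimal}–\ref{S:compactness}.
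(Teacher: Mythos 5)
Your proposal is correct and takes essentially the same route as the paper: iterate Lemma \ref{l:approximate} on geometric scales, verify that the almost-minimality error $\sigma_k\sim \kappa\rho^{k\beta}$ stays below $\epsilon_k^C$ by choosing $\gamma$ small relative to $\beta/C$ and $\kappa$ small relative to a power of $\epsilon_0$, and then read off $C^{1,\gamma_0}$ from the geometric decay of the directions $\nu_k$. You spell out two routine steps the paper leaves implicit (converting $|u-U|\le\tau_0$ into the horizontal $\epsilon_0$-flatness used as the base case, and assembling the H\"older modulus of $\nu_\infty$ from the Cauchy property of the $\nu_k$), but the structure, the key lemma, and the compatibility constraint $C\gamma\lesssim\beta$ are identical to the paper's argument.
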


\begin{proof}
 We assume that $0 \in F(u)$. Otherwise, we translate and rescale to the ball of radius $1$. 
 If $\tau_0$ is chosen small enough, then 
 \[
  U(x-\epsilon e_n, y) \leq u \leq U(x+\epsilon e_n, y). 
 \]
 We will now proceed with induction to prove there is a sequence of radii $r = \eta^k$ with unit directions in $\nu_r \in B'_1$ such that 
 \[
  U(x \cdot \nu_r - \epsilon_0 r^{1+\gamma},y)\leq u(x,y) \leq 
  U(x\cdot \nu_r + \epsilon_0 r^{1+\gamma},y) \quad \text{ in }  B_r. 
 \]
 The base case was shown directly above in Lemma \ref{l:approximate}. The rescaling
 \[
 u_r(x,y):=\frac{u(rx,ry)}{r^s}
 \]
 satisfies the $\sigma$ estimate $(H2)$ with a new $\sigma=(\kappa r^{\beta})^{1/3}$. Also, by the induction step, $u_r$ satisfies the flatness condition for Lemma \ref{l:approximate} with $\epsilon=\epsilon r^{\gamma}$. To obtain $\sigma \leq \epsilon^C$, we need 
 \[
 \kappa r^{\beta} \leq (\epsilon_0 r^{\gamma})^3 
 \]
 which is possible by choosing both $\kappa$ and $\gamma$ small enough. 
\end{proof}

\section{Future directions}

There are many potential ways to continue the study of these almost minimizers. A natural question is whether a Weiss type monotonicity formula holds. While the standard techniques presented obstructions, it is an avenue one might consider exploring. The Weiss type monotonicity formula could be useful in a critical analysis of the singular set. This was carried out in \cite{EE} for minimizers of the Alt-Caffarelli problem (when $s=1$). 

\appendix 
\section{Minimizers are viscosity solutions}
We employ standard techniques to show that minimizers are viscosity solutions. This will show a necessary step in the proof of Lemma \ref{l:subsol}. As this result is not explicitly shown in the literature, we provide the result here. 

\begin{proposition} \label{p:viscmin}
    Let $V$ and $\overline{V}$ be given as in the proof of Lemma \ref{l:subsol}. Then $\overline{V}$ is the minimizer of the functional
    \[
    \tilde{J}(w,B_1)=J(w,B_1)+ \int_{B_1}  2\mu^2 w |y|,
    \]
    among all $w \in H^1(a,B_1)$ satisfying $\min\{V, \overline{V}\}\leq w \leq \overline{V}$.
\end{proposition}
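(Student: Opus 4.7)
The idea is to compute the energy gap $\tilde{J}(w,B_1)-\tilde{J}(\overline{V},B_1)$ directly and show it is nonnegative by exploiting the subsolution bound and the free-boundary condition for $\overline{V}$ guaranteed by Lemma \ref{l:viscsub}. First, I set up the obstacle. Near $\partial B_1$ the construction in Lemma \ref{l:subsol} gives $\overline{V}\leq V$, so the two-sided obstacle constraint forces $w=\overline{V}$ in a neighborhood of $\partial B_1$. Setting $\varphi:=\overline{V}-w$, one has $\varphi\geq 0$ and $\varphi$ vanishes near $\partial B_1$. Moreover, on the coincidence set $\{\overline{V}(\cdot,0)=0\}$ in the thin space, the bound $0=\min\{V,\overline{V}\}\leq w\leq \overline{V}=0$ forces $w(\cdot,0)=0$ and hence $\varphi(\cdot,0)=0$ there.

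Next, using $\nabla w=\nabla\overline{V}-\nabla\varphi$, expand
\[
\tilde{J}(w)-\tilde{J}(\overline{V})=\int_{B_1}|y|^a|\nabla\varphi|^2-2\int_{B_1}|y|^a\nabla\overline{V}\cdot\nabla\varphi+\lambda^+\int_{\mathcal{B}_1}(\chi_{\{w>0\}}-\chi_{\{\overline{V}>0\}})-2\mu^2\int_{B_1}|y|\varphi.
\]
The $|\nabla\varphi|^2$ piece is nonnegative and can be dropped. For the cross term, I decompose $B_1$ into $\{y>0\}$ and $\{y<0\}$ and integrate by parts. Since $\overline{V}$ is $a$-harmonic off $\{y=0\}$ and vanishes outside its positivity set $\Omega^+:=\{\overline{V}>0\}$, only contributions from $\Omega^+$ and from limits at $\{y=0\}$ survive:
\[
-2\int_{B_1}|y|^a\nabla\overline{V}\cdot\nabla\varphi=2\int_{\Omega^+}|y|^a\mathcal{L}_a\overline{V}\,\varphi\;+\;I_\partial, \qquad I_\partial:=-4\int_{\{y=0\}}\lim_{y\to 0^+}\bigl(y^a\partial_y\overline{V}\bigr)\,\varphi\,dx.
\]
By Lemma \ref{l:viscsub}, read through the weight as $|y|^a\mathcal{L}_a\overline{V}\geq \mu^2|y|$, the bulk piece gives $2\mu^2\int|y|\varphi$, which exactly cancels the $-2\mu^2\int|y|\varphi$ term above.

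The main obstacle is showing
\[
I_\partial+\lambda^+\int_{\mathcal{B}_1}(\chi_{\{w>0\}}-\chi_{\{\overline{V}>0\}})\geq 0.
\]
The $\chi$-integrand is nonpositive (since $w\leq \overline{V}$ forces $\{w>0\}\subset\{\overline{V}>0\}$) and supported on $\Omega^+\cap\{y=0\}$. In the interior of this thin positivity set, the standard extension theory (Section \ref{s:fraclap}) gives $\lim_{y\to 0^+}y^a\partial_y\overline{V}=0$, so $I_\partial$ concentrates in an arbitrarily small neighborhood of the thin free boundary $F(\overline{V})$. The prescribed profile $\partial\overline{V}/\partial\tau^s=1$, together with the normalization of $\lambda^+$ chosen in Section \ref{S:comparison} so that $U$ is a one-phase solution, is the precise balance law that makes the flux at $F(\overline{V})$ compensate $\lambda^+$. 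I plan to execute this via a tubular-neighborhood argument: for $\delta>0$ let $N_\delta$ be a $\delta$-tube around $F(\overline{V})$; on $\Omega^+\setminus N_\delta$ the bulk cancellation above is clean, while on $N_\delta$ one uses the explicit asymptotics of $U$ (with $\overline{V}=U+o(|\cdot|^{s})$ in the normal direction to $F(\overline{V})$) to compute both the thin-space flux limit and the density of $\{\overline{V}>0\}\setminus\{w>0\}$, obtaining $I_\partial\geq \lambda^+\bigl|\{\overline{V}>0\}\setminus\{w>0\}\bigr|$ after sending $\delta\to 0$. This is the standard Alt--Caffarelli-type free-boundary identity adapted to the thin setting, analogous to the computations in \cite{CRS}. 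Putting everything together,
\[
\tilde{J}(w)-\tilde{J}(\overline{V})\geq \int_{B_1}|y|^a|\nabla\varphi|^2\geq 0,
\]
which is the claim. The delicate point where care is needed is the integration by parts at the thin free boundary, since $\nabla U$ is singular there; I will handle it by a standard approximation using smooth cutoffs supported away from $F(\overline{V})$ and passing to the limit, taking advantage of the integrability bounds on $|y|^a U_\tau$ established in the proof of Lemma \ref{l:viscsub}.
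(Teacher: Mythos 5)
Your approach — a direct energy expansion $\tilde J(w)-\tilde J(\overline V)$ with an integration by parts to isolate a thin-space flux term — is genuinely different from the paper's, but it has a real gap in the key boundary identity. The paper instead takes the minimizer $\tilde w$ of $\tilde J$ over the constrained class (which exists by the direct method), introduces the sliding family $V_t=V_{\overline M,\xi',\overline\zeta}(x+te_n,y)$ with $t\in[0,\mu/(8n)]$ interpolating between $V_0\leq\min\{V,\overline V\}$ and $V_{\mu/(8n)}=\overline V$, and shows that whenever $V_t$ touches $\tilde w$ from below then $V_t\equiv\tilde w$. The interior touching case is dispatched by the strong maximum principle (since $\mathcal L_a V_t\geq\mathcal L_a\tilde w$ in $\{\tilde w>0\}$), and the free-boundary touching case by blow-up/nondegeneracy plus the Hopf lemma \eqref{e:hopf}. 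No energy identity at the free boundary is ever needed.

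The specific gap in your argument is the claim that $I_\partial\geq\lambda^+\bigl|\{\overline V>0\}\setminus\{w>0\}\bigr|$. Carrying out the integration by parts carefully over $B_1^\pm$ (possible since $\overline V$ is smooth away from $F(\overline V)\subset\{y=0\}$), one finds $I_\partial=4\int_{\mathcal B_1}\lim_{y\to 0^+}(y^a\partial_y\overline V)\,\varphi\,dx$. On the interior of the thin positivity set $\{\overline V(\cdot,0)>0\}$ this limit vanishes (as you correctly note), and on the thin zero set $\{\overline V(\cdot,0)=0\}$ the constraint forces $\varphi(\cdot,0)=0$ (as you also note). There is no surviving concentration at $F(\overline V)$: in the tubular-neighborhood argument you sketch, the flux over $\partial N_\delta\cap\{\overline V>0\}$ scales like $\delta^a\cdot\delta^{s-1}\cdot\delta^s\cdot\delta=\delta\to 0$ (using $a=1-2s$ and $\varphi\leq\overline V\lesssim\rho^s$), so $I_\partial=0$. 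Meanwhile $\bigl|\{\overline V>0\}\setminus\{w>0\}\bigr|$ can be strictly positive (the constraint $w\geq\min\{V,\overline V\}$ permits $w$ to vanish in the strip $\{V=0\}\cap\{\overline V>0\}$), so your claimed inequality fails. The negative term $-\lambda^+|\{\overline V>0\}\setminus\{w>0\}|$ in the expansion would have to be absorbed by $\int|y|^a|\nabla\varphi|^2$, which amounts to a nondegeneracy / Poincaré-type estimate near $F(\overline V)$ that is not a "standard identity" and is not supplied. This is precisely the difficulty the paper's maximum-principle proof sidesteps, so you should switch to that route (or be prepared to re-derive the quantitative Alt--Caffarelli-type energy bound from scratch).
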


\begin{proof}
 Let $\tilde{w}$ be the minimizer, and let $V_t=V_{\overline{M}, \xi', \overline{\zeta}}(x+te_n,y)$ with $t \in [0,\mu/(8n)]$. Notice that $V_0 \leq \min \{V, \overline{V}\}$ and $V_{\mu/(8n)}=\overline{V}$. Furthermore, $V_t <\overline{V}$ on $\partial B_1 \cap \{\overline{V}>0\}$. We will show that if $V_t$ touches $\tilde{w}$ by below on $\{\tilde{w}>0\}$ or $F(\tilde{w})$, then necessarily $\tilde{w}=V_t$, so that $\overline{V}=\tilde{w}$. 
 
 Suppose $V_t(X)=\tilde{w}(X)>0$. Now $|y|^a \mathcal{L}_a \tilde{w}=\mu^2 |y|$. Since $\mathcal{L}_a V_t \geq \mathcal{L}_a \tilde{w}$ in $\{\tilde{w}>0\}$, then from the strong maximum principle, we conclude that $V_t \equiv \tilde{w}$. 
 
 Now suppose that $x \in F(V_t) \cap F(\tilde{w})$. From the standard proofs of \cite{A} adapted to this nonhomogeneous situation, we have that $\tilde{w}$ has both the $C^{0,s}$ regularity as well as the nondegeneracy. Since $V_t$ touches $\tilde{w}$ by below at $x$, then every blow-up of $\tilde{w}$ at $x$ must be the same rotation of the prototypical $2D$ solution. Then 
 \[
  \frac{\partial \tilde{w} - V_t}{\partial \tau^s}(x,0)=0. 
 \]
 Consequently, 
 \[
 \lim_{y \to 0}y^a \partial_y (\tilde{w} - V_t)(x,y)=0.  
 \]
 Since $\tilde{w}-V_t \geq 0$, $\tilde{w}(x,0)-V_t(x,0)=0$, then this will violate the Hopf-type lemma \eqref{e:hopf}. 
\end{proof}

\section*{Declarations}
\subsection*{Funding}
M.A. has been partially supported by the Simons Grant 637757. M.S.V.G has been partially supported by the NSF grant DMS-2054282.

\subsection*{Potential conflicts of interest}
The authors have no conflicts of interest to declare that are relevant to the content of this article.

\subsection*{Research involving human participants and/or animals}
Not applicable.

\subsection*{Informed consent}
Not applicable.

\subsection*{Data availability}
Not applicable.

\bibliographystyle{plain}
\bibliography{Bibliography.bib}

\end{document}